\theoremstyle{plain} \newtheorem{teo}{Theorem}[section]
\theoremstyle{definition} \newtheorem{defi}[teo]{Definition}
\theoremstyle{plain} \newtheorem{prop}[teo]{Proposition}
\theoremstyle{plain} \newtheorem{lema}[teo]{Lemma}
\theoremstyle{plain} \newtheorem{cor}[teo]{Corollary}
\theoremstyle{remark} \newtheorem{obs}[teo]{Remark}
\theoremstyle{remark} \newtheorem{ej}[teo]{Example}
\newcommand{\tor}{\mbox{Tor}}
\newcommand{\Hom}{\mbox{Hom}}
\newcommand{\mo}{\mbox{mod}}
\newcommand{\gd}{\mbox{gldim}}
\newcommand{\pd}{\mbox{pd}}
\newcommand{\ann}{\mbox{ann}}
\newcommand{\tp}{\mbox{top}}
\newcommand{\rad}{\mbox{rad}}
\newcommand{\End}{\mbox{End}}
\title[On the global dimension of the endomorphism algebra of a $\tau$-tilting module]
{On the global dimension of the endomorphism algebra of a $\tau$-tilting module}
\author[Suarez Pamela ]{Pamela Suarez}
\address{Centro Marplatense de Investigaciones Matem\'aticas (CEMIM), Facultad de Ciencias Exactas y
Naturales, Funes 3350, Universidad Nacional de Mar del Plata, 7600 Mar del
Plata, Argentina}
\email{pamelaysuarez@gmail.com}
\keywords{$\tau$-tilting modules, Endomorphism algebra, Global dimension, Annihilator}
\subjclass[2010]{16G20, 16E10}
\begin{document}
\sloppy
\maketitle
\begin{abstract}
We find a relationship between the global dimension of an algebra $A$ and the global dimension of the endomorphism algebra of a $\tau$-tilting module, when $A$ is of finite global dimension.  We show that, in general, the global dimension of the endomorphism algebra is not always finite. For monomial algebras and special biserial algebras of global dimension two, we prove that the global dimension of the endomorphism algebra of any $\tau$-tilting module is always finite. Moreover, for special biserial algebras, we give an explicit bound.
\end{abstract}

\section*{Introduction}

Let $A$ be a finite dimensional algebra over an algebraically closed field $k$. The $\tau$-tilting theory was introduced by T. Adachi, O. Iyama and I. Reiten in \cite{AIR}. In such a work, the mentioned authors developed a generalization of the classical tilting theory where the process of mutation is always possible. The basic idea of a mutation is to replace an indecomposable direct summand of a tilting module by another indecomposable module in order to obtain a new tilting module. For the process of mutation to be always possible the authors considered the notion of $\tau$-rigid modules introduced by M. Auslander and S. Smalø in \cite{AS} and, moreover, they also considered the notion of support tilting module introduced by C. Ingalls and H. Thomas in \cite{IT}. In this wider class of modules it is possible to model the process of mutation inspired by the cluster tilting theory. In this way we get that an almost complete support $\tau$-tilting module over a finite dimensional algebra has exactly two complements, and therefore, the mutation is always possible.  Support $\tau$-tilting modules are very closely related with  functorially finite torsion classes  in $\mo\,A$, with cluster tilting theory and with silting theory, between others.

Let $M$ be a (classical) tilting module in the category $\mo\,A$. Then it is  well-known  that ${\gd\,\mbox{End}_AM\leq \gd\,A +1}$, see \cite[Chapter VI, Theorem 4.2]{ASS}. In  general, if we consider $N$ a tilting module of finite projective dimension, then we know that  ${\gd\,\mbox{End}_AN\leq \gd\,A +\mbox{pd}_AN}$, see \cite[Corollary 2.4]{Miya}. In this paper we investigate how to extend these facts to the context of $\tau$-tilting modules.

Given $T$ a $\tau$-tilting $A$-module and $B=\End_AT$, we are mainly interested to compare the global dimension of $A$  and the global dimension  of $B$. More precisely, if $A$ has finite global dimension then we prove the following result.

\vspace{0.1in}
\noindent \textbf{Theorem A.}
\textit{Let $A$ be an algebra of finite global dimension, $T$ be a $\tau$-tilting $A$-module and $B=\emph{End}_AT$. Then $\emph{gldim}\, A \leq \emph{gldim}\, B + \emph{pd}_A(A/\emph{ann}\, T) + 1$, where $\emph{ann}\, T$ is the annihilator of $T$}.
\vspace{0.1in}

\indent In case we deal with (classical) tilting modules or with tilting modules of finite projective dimension, if $A$ is of finite global dimension then the endomorphism algebra of such modules is always of finite global dimension. In general, this is not the case for $\tau$-tilting modules. For example, for $n\geq 3$, we find a family of algebras $A_n$ of global dimension $n$ and  $\tau$-tilting $A_n$-modules $T_n$ such that $\gd\mbox{End}_{A_n}T_n=\infty$, see Example \ref{infinita}.

A natural question is to know if  the endomorphism algebra of a $\tau$-tilting module over an algebra of global dimension two has always finite global dimension. In order to give an answer to the above question we consider two families of algebras; the monomial  and the special biserial algebras. The main key to give a solution to this problem is to study the generators of the annihilator of a $\tau$-tilting module. We show that the generators of the annihilator of a $\tau$-tilting module are related with the relations of the algebra. More precisely, we prove Theorem B.

\vspace{0.1in}
\noindent \textbf{Theorem B.}
\textit{Let $A=kQ/I$ be an algebra, $T$ be a $\tau$-tilting $A$-module and ${\rho \in \emph{ann}\, T}$ be  a non-zero path of length greater than or equal to two such that no proper subpath of $\rho$ belongs to $\emph{ann}\,T$. Then one of the following conditions hold:
\begin{enumerate}[i)]
\item[i)] There exists a non-zero path $\gamma$ such that $\gamma\rho $ is a zero-relation in $A$, or
\item[ii)] There exist non-zero paths $\gamma_i, \gamma$ and non-zero scalars  $\lambda_i\in k$ such that ${\gamma\rho + \sum\limits_{i\in I}\lambda_i\gamma_i}$ is a minimal relation in $A$, where $I$ is a finite set of indices.
\end{enumerate}}
\vspace{0.1in}

For monomial algebras of global dimension two we prove the following result.

\vspace{0.1in}
\noindent \textbf{Theorem C.}
\textit{Let $A=kQ/I$ be a monomial algebra of global dimension two, $T$ be a $\tau$-tilting $A$-module and $B=\emph{End}_A\,T$. Then $\emph{gldim}\, B < \infty$.}
\vspace{0.1in}

Another natural question is to know if there exists an explicit bound for the $\gd\,\mbox{End}_AT$ where $A$ is a monomial algebra. In Example \ref{ejemplomonomialsincota}, we show that this is not the case. On the other hand, for special biserial algebras  we also prove that the global dimension of the endomorphism algebra of a $\tau$-tilting module over these algebras is always finite and, moreover, we give an explicit bound, proving Theorem $D$.

\vspace{0.1in}
\noindent \textbf{Theorem D.}
\textit{Let $A=kQ/I$ be a special biserial algebra such that ${\emph{gldim}\,A=2}$, $T$ be a $\tau$-tilting $A$-module and $B=\emph{End}_AT$. Then $\emph{gldim}\,B\leq 5$.}
\vspace{0.1in}

This paper is organized as follows. In the first section we fix some notation and present some preliminaries results. Section $2$ is dedicated to prove Theorem $A$ and we prove that the endomorphism algebra of a $\tau$-tilting module is not necessarily of finite global dimension. In section $3$, we study the annihilator of a $\tau$-tilting module and  we prove Theorem $C$ and Theorem $D$.

\section{Preliminaries}

Throughout this paper, all algebras are basic connected finite dimensional algebras over an algebraically closed field $k$. For an algebra $A$ we denote by $\mo\,A$ the category of finitely generated right $A$-modules.
\subsection{}


A \textbf{quiver} $Q$ is a quadruple $(Q_0,Q_1,s,t)$ where $Q_0$ is the set of points of $Q$, $Q_1$ is the set of arrows of $Q$, and $s,t$ are functions from $Q_1$ to $Q_0$ which give, respectively, the source $s(\alpha)$ and the target $e(\alpha)$ of a given arrow $\alpha$.

A {\bf path} in a quiver $Q$ is a sequence $\gamma =\alpha _1...\alpha _n$ such that $e(\alpha
_i)=s(\alpha _{i+1})$ for $1\leq i<n,$ as well as the symbol
$e_i$ for $i\in Q_0.$ The paths $e_i$ are called { \bf
trivial paths} and $s(e_i)=e(e_i)=i.$ For a non-trivial path
 $\gamma =\alpha _1...\alpha _n$, we have that $s(\gamma
)=s(\alpha _1)$ and $e(\gamma )=e(\alpha _n).$

A path $\gamma$ of \textbf{length} $l\geq 1$ with source $a\in Q_0$ and target $b\in Q_0$ is a sequence of $l$ arrows $\alpha _1,...,\alpha _l$ such that $s(\alpha _1)=a$, $e(\alpha _l)=b$ and $s(\alpha _{i+1})=e(\alpha _i)$, for $1\leq i \leq l-1$. If $e_i$ is a trivial path, then the length of $e_i$ is zero.

Given an algebra $A$, there exists a unique quiver $Q_A$ and (at least) a surjective algebra morphism $\eta:kQ_A\rightarrow A$, where $kQ_A$ denotes the path algebra of $Q_A$. Setting $I=\mbox{ker}\,\eta$, we then have $A\cong kQ_A/I$. The morphism $\eta$ is called a \textbf{presentation} of $A$, and $A$ is said to be given by the bound quiver $(Q_A,I)$. The ideal $I$ is generated by a finite set of relations. A relation in $Q_A$ from a vertex $x$ to a vertex $y$ is a linear combination $\rho=\sum\limits_{i=1}^nc_iw_i$, where  $c_i\in k\setminus \{0\} $ is non-zero for $i=1,\dots, n$, and for all $i$, $w_i$ are paths of length at least two from $x$ to $y$.

A relation  $\rho\in I$ is said to be \textbf{minimum} if $\rho=\sum\limits_{i}\beta_i\rho_i\gamma_i$, where $\rho_i$ is a relation for each $i$, then there exists $i$ such that $\beta_i, \,\gamma_i\in k$. In this work, the word relation means minimum relation in this sense.

A relation $\rho=\sum\limits_{i=1}^nc_iw_i$ is \textbf{monomial} or a \textbf{zero-relation} if $m=1$ and
\textbf{ minimal} if $m\geq 2$ and, for every non-empty and proper subset $J$ of $\{1,\dots,m\}$, we have that $\sum\limits_{j\in J}c_jw_j\notin I$. Moreover, a pair $(p,q)$ of non-zero paths $p$ and $q$ from a vertex  $a$ to a vertex $b$ is called a \textbf{binomial relation}  if $\lambda p+\mu q \in I$, with $\lambda,\mu\in k-\{0\}$. We say that $p$ and $q$ are the maximal subpaths of $(p,q)$.

Given a finite quiver $Q$, we denote by  $M=(M_a,\varphi_{\alpha})_{(a\in Q_0,\, \alpha\in Q_1)}$ a representation of $Q$. For a detail account on representation of quivers we refer the reader to \cite[Chapter III, Lemma 2.1]{ASS}.

\subsection{}
Recall that an  algebra $A=kQ/I$ is said to be \textbf{monomial} if the ideal $I$ is generated by paths. In \cite{GHZ}, E. Green, D. Happel and D. Zacharia gave an algorithm for computing projective resolutions of the simple modules over monomial algebras. In particular, the authors gave a bound for the global dimension of a monomial algebra in terms of the relations.

We recall here some definitions and results that shall be useful for our purposes.  We refer the reader to \cite{GHZ} for a detail account on this topics.

Let $(B,\rho)$ be a directed path with a minimal set of relations $\rho$. Given two vertices $u,w$ in $B$ we say that $u<w$ if there exists a directed path from $v$ to $w$.

\begin{defi}\cite{GHZ}
Let $v_0$ be a vertex of a directed path $(B,\rho)$. We define inductively the \textbf{associated sequence of relations  $\mathcal{S}$ of  $v_0$} (along $B$) as follows: if there is no $r\in\rho$ such that $s(r)=v_0$, then $\mathcal{S}=\emptyset$. Assume there is $r_1\in\rho$ such that $s(r_1)=v_0$.  Let $r_2$ be the relation in $\rho$ (if it exists) such that $s(r_1)<s(r_2)<e(r_1)$ where $s(r_2)$ is minimal satisfying this inequality. Assume that we have constructed $r_1, r_2,\dots,r_i$. Let
\[R_{i+1}=\{r\in\rho \,\mid\, e(r_{i-1})\leq s(r) < e(r_i)\}.\]
If $R_{i+1}\neq \emptyset$, let $r_{i+1}$ be such that $s(r_{i+1})$ is minimal with $r_{i+1}\in R_{i+1}$. Then $\mathcal{S}$ is the sequence  $r_1,r_2,\dots,r_N$ where $N$ is either $\infty$ if $R_i\neq \emptyset$ for all $i$, or $N$ is such that $R_{N+1}=\emptyset$ but $R_i\neq \emptyset$.
\end{defi}

\begin{teo}\cite[Theorem 1.2]{GHZ}
\label{arbolresproy}
Let $(B,\rho)$ be a directed path with minimal set of relations $\rho$. Let $v_0,v_1$ be vertices of $B$ such that there exists an arrow from $v_0$ to $v_1$. Let $\mathcal{S}=\{r_i\}_{i=1}^N$ be the associated sequence of relations of $v_0$ and $x_i=e(r_i)$.
\begin{enumerate}
  \item[a)] If $\mathcal{S}=\emptyset$, then $0\rightarrow P(v_1)\rightarrow P(v_0)\rightarrow S(v_0)\rightarrow 0$ is the minimal projective presentation of $S(v_0)$.
  \item[b)] Suppose $\mathcal{S}\neq \emptyset$. If $N<\infty$, then
  \[0\rightarrow P(x_N)\rightarrow P(x_{N-1})\rightarrow \dots \rightarrow P(x_1)\rightarrow P(v_1)\rightarrow P(v_0)\rightarrow S(v_0)\rightarrow 0\]
  is the minimal projective presentation of $S(v_0)$, and if $N=\infty$ then $\emph{pd}_BS(v_0)=\infty$ and its minimal projective resolution is
   \[\dots\rightarrow P(x_i)\rightarrow P(x_{i-1})\rightarrow \dots \rightarrow P(x_1)\rightarrow P(v_1)\rightarrow P(v_0)\rightarrow S(v_0)\rightarrow 0\]
\end{enumerate}
\end{teo}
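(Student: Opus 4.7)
The plan is to build the minimal projective resolution of $S(v_0)$ recursively, identifying each syzygy as a single indecomposable projective module. The key is that $B$ is monomial (so each $P(u)$ has a basis of paths from $u$ not in $I$) and $(B,\rho)$ is a directed path (so there is a unique path between any two comparable vertices).

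First, the kernel of $P(v_0)\twoheadrightarrow S(v_0)$ is $\rad\,P(v_0)$. Every nontrivial path from $v_0$ begins with the unique arrow $\alpha\colon v_0\to v_1$, so left multiplication by $\alpha$ gives a surjection $\alpha^{*}\colon P(v_1)\twoheadrightarrow \rad\,P(v_0)$. A basis element $p\in P(v_1)$ lies in $\ker \alpha^{*}$ iff $\alpha p$ contains a relation as subpath; since $p\notin I$ and $\alpha$ has length one, that relation must start at $v_0$. If $\mathcal{S}=\emptyset$ there is no such relation, $\alpha^{*}$ is injective, and case (a) follows. Otherwise $r_1$ is the shortest relation starting at $v_0$ and, writing $r_1=\alpha p_1$, the kernel is generated by $p_1$, the unique path from $v_1$ to $x_1$.

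I would then prove by induction on $k\ge 1$ that the $k$-th syzygy is realized by $P(x_k)\to P(x_{k-1})$, given by left multiplication by the unique path $q_{k-1}$ from $x_{k-1}$ to $x_k$ (with conventions $x_0=v_1$, $q_0=p_1$), and that its kernel is generated by the unique path from $x_k$ to $x_{k+1}$. The key point is that $q_{k-1}$ is a proper subpath of the minimum relation $r_k$, hence contains no relation itself; therefore the only way a basis path $s$ from $x_k$ can satisfy $q_{k-1}s\in I$ while $s\notin I$ is through a relation $r$ straddling $x_k$, namely with $s(r)\in [x_{k-1},x_k)$ and $e(r)>x_k$. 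Among such $r$, the one with the smallest start must also have the smallest end, since otherwise a shorter relation would sit strictly inside it, contradicting minimum-ness. By the defining condition of $R_{k+1}$ this relation is precisely $r_{k+1}$, so $e(r_{k+1})=x_{k+1}$ and the kernel is generated by the path from $x_k$ to $x_{k+1}$, as desired.

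The procedure terminates exactly when $R_{N+1}=\emptyset$: the kernel at that stage is zero and we obtain the finite resolution of case (b) with $N<\infty$; if $\mathcal{S}$ is infinite the construction never halts, producing an infinite minimal projective resolution and $\pd_{B}\,S(v_0)=\infty$. The main technical obstacle is the inductive step, where the combinatorial interplay between minimum-ness of $r_k$ (ruling out internal relation subpaths of $q_{k-1}$) and minimality of $s(r_{k+1})$ (forcing minimality of $e(r_{k+1})$) must be carefully aligned, so that the algebraic syzygy sequence coincides term-by-term with the combinatorial associated sequence $\mathcal{S}$.
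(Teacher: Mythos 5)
This theorem is quoted in the paper from \cite{GHZ} without proof, and your syzygy-by-syzygy reconstruction is essentially the original Green--Happel--Zacharia argument: realize each kernel as the image of multiplication by the unique path $q_{k}$ from $x_k$ to $x_{k+1}$, observe that any relation forcing $q_{k-1}s\in I$ with $s\notin I$ must straddle $x_k$ with source in $[x_{k-1},x_k)$ (so it lies in $R_{k+1}$), and use the no-nesting property of a minimal set of relations along a directed path to conclude that the relation of minimal source also has minimal target, hence equals $r_{k+1}$. The argument is correct; in particular, since each syzygy is cyclic (generated by a single path, hence with simple top $S(x_{k+1})$), the resolution you construct is automatically minimal, which also yields $\mbox{pd}_B S(v_0)=\infty$ when $N=\infty$.
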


Using the above result  and  the topological universal cover of a quiver,  the authors gave an algorithm to compute the projective resolutions of simple modules over monomial algebras, see  \cite[Theorem 2.3]{GHZ}. As a consequence of this fact, they proved the following proposition that  leads to a certain type of periodicity in the projective resolution of the simple $A$-modules of infinite projective dimension.


\begin{prop}\cite[Corollary 2.5]{GHZ}\label{propciclo}
\label{ciclosmonomial}
Let $A$ be a monomial algebra. Assume that $S$ is a simple $A$-module such that $\emph{pd}_AS=\infty$. Let
\[\dots\stackrel{f_n}\rightarrow A_{n-1} \rightarrow \dots \rightarrow A_1 \stackrel{f_1}\rightarrow A_0\stackrel{f_0}\rightarrow S \rightarrow 0 \]
be a minimal projective resolution of $S$. Then, there exists a sequence of  indecomposable projective $A$-modules $P_1,P_2,\dots,P_t$ and a positive integer  $l$ such that $P_i$ is a direct summand of $A_{m+l+i}$ for all $m\geq0$ and for $i=1,\dots,t$.
\end{prop}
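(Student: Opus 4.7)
The plan is to use the algorithmic description of the minimal projective resolution over a monomial algebra given by \cite[Theorem 2.3]{GHZ}, which produces the terms $A_n$ by walking the universal cover of $Q$ and recording the endpoints of the successive relations in the associated sequences. In this language, each indecomposable summand of $A_n$ corresponds to a length-$n$ ``chain of overlapping relations'' $(r_1,r_2,\dots,r_n)$ emanating from the vertex $v_0$ that supports $S$, where $r_i$ is the $i$-th relation in an associated sequence along some directed path in the cover. The hypothesis $\pd_A S=\infty$ forces at least one such chain to be extendable indefinitely, i.e.\ there is an infinite sequence $(r_1,r_2,\dots)$ of relations of $A$ fitting together in this way.

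Next, I would isolate a \emph{finite-state} description of how the chain propagates. Given the construction of the associated sequence, the choice of $r_{i+1}$ (its image in $Q$) depends only on the image in $Q$ of the pair $(r_{i-1},r_i)$: namely, on the overlap $e(r_{i-1})\le s(r_{i+1})<e(r_i)$ together with the relations of $A$ starting in that interval, information which is entirely determined by $(r_{i-1},r_i)$ viewed as relations of the algebra $A$ itself (not of the cover). Since $A$ is finite-dimensional and monomial, the set of relations is finite, so the set of possible pairs (the ``state space'') is finite.

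Now I apply a pigeonhole argument to the infinite sequence of states $(r_{i-1},r_i)$ for $i\ge 2$: some state must recur, say at indices $i=j$ and $i=j+t$. Because the continuation rule is deterministic in the state, the sequence of projectives $P(e(r_j)),P(e(r_{j+1})),\dots,P(e(r_{j+t-1}))$ repeats cyclically from position $j$ onwards. Setting $P_i=P(e(r_{j+i-1}))$ for $i=1,\dots,t$ and taking $l=j-1$, one obtains that $P_i$ is a summand of $A_{m+l+i}$ for every $m\ge 0$ by tracking the chain all the way back to $v_0$ through the part of the sequence common to all iterations.

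The step I expect to be the main obstacle is verifying that the pair $(r_{i-1},r_i)$, together with the overlap position, really does form a Markov-style state: one must check that in the universal cover the choice ``$r_{i+1}$ has minimal $s(r_{i+1})$ in the window $[e(r_{i-1}),e(r_i))$'' lifts the analogous choice in $Q$, so that distinct lifts of the same pair in the cover produce isomorphic future behaviour up to translation of projectives. This is essentially the content that makes \cite[Theorem 2.3]{GHZ} applicable; once it is granted, the finite-state pigeonhole argument is straightforward, and the rest of the proof is bookkeeping to package the repeated block as the projectives $P_1,\dots,P_t$ with period $t$ and initial offset $l$.
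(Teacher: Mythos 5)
Your proposal is correct and follows essentially the same route as the source: the paper does not reprove this statement (it is quoted verbatim as \cite[Corollary 2.5]{GHZ}), and the argument it summarizes in Remark \ref{obsciclo} is exactly yours — lift to the universal cover, extract an infinite associated sequence of relations via the machinery of Theorem \ref{arbolresproy} and \cite[Theorem 2.3]{GHZ}, and use finiteness of the relation data in $Q$ to force the projected sequence to wind periodically around a fixed cycle, producing the recurring projective summands $P(e(r_i))$. The step you flag as the main obstacle — that recurrence of the projected state can be closed up into a genuinely periodic associated sequence realized in the cover — is precisely what the cited proof supplies (the sequence $r_1^*,r_2^*,\dots$ with $\Phi(r^*_{lb+j})=\Phi(r_{a+j})$ for $l\geq 1$), so your outline matches the known proof.
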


\begin{obs}\label{obsciclo}
Consider $(\widetilde{Q},\widetilde{\rho})$ the universal cover of $(Q,\rho)$ and $\Phi:(\widetilde{Q},\widetilde{\rho}) \rightarrow (Q,\rho)$. In the proof of the Proposition \ref{ciclosmonomial}, the authors show that if $v$ is a vertex in $Q$ such that the simple module $S(v)$ has infinite projective dimension, then it is possible to construct an infinite sequence of minimal relations $r_1^*,r_2^*,\dots, r_n^*, \dots$ associated to $v^*$ in $(\widetilde{Q},\widetilde{\rho})$, with $\Phi(v^*)=v$ and $a,b$ integers with $1\leq a <b$, such that $r_i^*=r_i$ for $i=1,\dots, b+1$ and $\Phi(r^*_{lb+j})=\Phi(r_{a+j})$ for $l\geq 1$ and $0\leq j\leq b-a-1$. As a  consequence, there exists a sequence of relations associated to $v$ in $(Q,\rho)$ that winds up around a fixed cycle. Furthermore, since ${e(r_{i-1}^*)<s(r_{i+1}^*)<e(r_i^*)}$ in ${(\widetilde{Q},\widetilde{\rho})}$, there exist non-zero paths $\gamma_j$, for $j=1,\dots, n$,  such that $r_j=\gamma_j\gamma_{j+1}$ for $j=1,\dots,n-1$ and $r_n=\gamma_n\widetilde{\gamma}_1$, where $\widetilde{\gamma}_1$ is a subpath of $\gamma_1$.
\end{obs}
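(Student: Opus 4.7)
The plan is to realize this observation as a direct lifting of the Green-Happel-Zacharia construction used in the proof of Proposition \ref{ciclosmonomial} to the universal cover $(\widetilde{Q},\widetilde{\rho})$, combined with an explicit read-off of the inequality that defines the associated sequence. I will first build an infinite sequence of relations upstairs, then use the finiteness of $(Q,\rho)$ to force periodicity downstairs, and finally extract the factorization $r_j=\gamma_j\gamma_{j+1}$ from the inequality $e(r_{i-1}^*)<s(r_{i+1}^*)<e(r_i^*)$.

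For the first step, I would choose any lift $v^* \in \widetilde{Q}$ of $v$ and apply Theorem \ref{arbolresproy}(b) to $S(v^*)$. Since $\Phi$ is a covering of bound quivers, it preserves minimal projective resolutions of simple modules, hence $\pd\,S(v^*)=\pd_A S(v)=\infty$. Theorem \ref{arbolresproy}(b) applied in $(\widetilde{Q},\widetilde{\rho})$ then guarantees that the associated sequence $\mathcal{S}^*=\{r_i^*\}_{i\geq 1}$ at $v^*$ is infinite, with $s(r_1^*)=v^*$ and $e(r_{i-1}^*)<s(r_{i+1}^*)<e(r_i^*)$ throughout.

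For the second step, since the minimal set of relations of $A$ is finite, pigeonhole produces indices $1\leq a<b$ with $\Phi(r_a^*)=\Phi(r_b^*)$. Because the rule producing $r_{i+1}^*$ depends only on the local bound-quiver data at the interval $[e(r_{i-1}^*),e(r_i^*))$, and this local data descends through $\Phi$, induction on $i$ gives $\Phi(r^*_{lb+j})=\Phi(r^*_{a+j})$ for every $l\geq 1$ and $0\leq j\leq b-a-1$. Taking $n=b-a$, this yields the cyclic sequence $r_1,\dots,r_n$ in $(Q,\rho)$.

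Finally, since $A$ is monomial, each $r_i^*$ is a single path from $s(r_i^*)$ to $e(r_i^*)$. The strict inequality $s(r_{i+1}^*)<e(r_i^*)$ shows that $r_i^*$ passes through $s(r_{i+1}^*)$, while $s(r_{i+2}^*)\geq e(r_i^*)$ combined with $s(r_{i+2}^*)<e(r_{i+1}^*)$ shows that $r_{i+1}^*$ passes through $e(r_i^*)$. Setting $\gamma_i^*$ to be the initial subpath of $r_i^*$ from $s(r_i^*)$ to $s(r_{i+1}^*)$, the terminal segment of $r_i^*$ from $s(r_{i+1}^*)$ to $e(r_i^*)$ coincides with the initial segment of $r_{i+1}^*$ on the same vertices, so it equals $\gamma_{i+1}^*$ and $r_i^*=\gamma_i^*\gamma_{i+1}^*$. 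Projecting through $\Phi$ gives $r_j=\gamma_j\gamma_{j+1}$ for $j=1,\dots,n-1$. The wrap-around $r_n=\gamma_n\widetilde{\gamma}_1$ then follows because $\Phi(r_{n+1}^*)=r_1$ and $\Phi(\gamma_{n+1}^*)$ is an initial subpath of $r_1$ ending at an internal vertex no further than $s(r_2)$, so it is an initial subpath $\widetilde{\gamma}_1$ of $\gamma_1$. The main obstacle I anticipate is the periodicity step: showing that a single projected coincidence $\Phi(r_a^*)=\Phi(r_b^*)$ really does propagate to full eventual periodicity requires spelling out carefully that the minimal-choice rule governing the associated sequence is preserved by covering morphisms of bound quivers.
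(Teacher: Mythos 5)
Your outline --- lift to the universal cover, extract an infinite associated sequence of relations, force eventual periodicity by finiteness of $(Q,\rho)$, and read the factorization off the interlacing inequalities --- is essentially the argument of the proof of \cite[Corollary 2.5]{GHZ} that this remark summarizes (the paper gives no independent proof, only the citation). However, two of your steps have genuine gaps as written.

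First, Theorem \ref{arbolresproy} is stated only for a directed path $(B,\rho)$, whereas the universal cover $(\widetilde{Q},\widetilde{\rho})$ is a tree: starting from $v^*$ the minimal projective resolution of $S(v^*)$ branches, and each syzygy is a direct sum indexed by associated sequences along the various directed paths of the tree --- this is precisely the content of \cite[Theorem 2.3]{GHZ}, which the paper invokes just before Proposition \ref{ciclosmonomial}. Infinite projective dimension only tells you that this resolution tree is infinite; to obtain a single infinite associated sequence $r_1^*,r_2^*,\dots$ along one branch you need a K\"{o}nig-lemma argument (finitely many choices at each stage, infinitely many stages). Your direct appeal to Theorem \ref{arbolresproy}(b) ``applied in $(\widetilde{Q},\widetilde{\rho})$'' is not licensed, and this is exactly where the tree structure of the cover has to be handled.

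Second, the minimal-choice rule defines $r_{i+1}^*$ from the window $\{r\,\mid\,e(r_{i-1}^*)\leq s(r)<e(r_i^*)\}$, so it depends on the \emph{pair} $(r_{i-1}^*,r_i^*)$, not on $r_i^*$ alone. A single coincidence $\Phi(r_a^*)=\Phi(r_b^*)$ therefore does not propagate by the induction you sketch; you must pigeonhole on consecutive projected pairs $(\Phi(r_i^*),\Phi(r_{i+1}^*))$ (equivalently, produce a deck transformation carrying the pair at stage $a$ to the pair at stage $b$), after which your induction does go through because $\Phi$ preserves the local window data. You flag this obstacle honestly, but it is left open and the base case is wrong as stated. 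A smaller but real point in your last step: identifying the terminal segment of $r_i^*$ (from $s(r_{i+1}^*)$ to $e(r_i^*)$) with $\gamma_{i+1}^*$ (from $s(r_{i+1}^*)$ to $s(r_{i+2}^*)$) silently assumes $s(r_{i+2}^*)=e(r_i^*)$, whereas the construction only yields $e(r_i^*)\leq s(r_{i+2}^*)$; under strict inequality the terminal segment of $r_i^*$ is a proper initial subpath of $\gamma_{i+1}^*$ and the clean factorization $r_j=\gamma_j\gamma_{j+1}$ fails. A careful version must either verify this equality along the periodic part of the sequence or carry the subpath slack at every step, in the same way the statement itself allows $\widetilde{\gamma}_1$ to be only a subpath of $\gamma_1$ at the wrap-around.
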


\subsection{}
We recall the definition of special biserial algebra.
\begin{defi}
An algebra  $A$ is \textbf{special biserial} if $A\cong kQ/I$ with $(Q,I)$ a quiver with relations satisfying the following conditions:
\begin{enumerate}
  \item[a)] Each vertex of $Q$ is  start-point or end-point of at most two arrows.
  \item[b)] For an arrow  $\alpha$, there is at most one arrow  $\beta$ such that $\alpha\beta\notin I$ and at most one arrow  $\gamma$ such that $\gamma\alpha\notin I$.
\end{enumerate}
\end{defi}

Next, we fix some notation and terminology that shall be used in this work.

Let $A=kQ/I$ be a special biserial algebra. A walk $w$ is called \textbf{reduced} if either $w$ is trivial or $w=c_1\dots c_n$ such that $c_i$ or $c_i^{-1}$ is an arrow and $c_{i+1}\neq c_i^{-1}$ for all $1\leq i \leq n$.   A reduced walk  $w$ in $Q$ is called a \textbf{string} if each path contained in $w$ is neither a zero-relation nor a maximal subpath of a binomial relation.

Let $A=kQ/I$ be a special biserial algebra. Let $w$ be a string in $(Q,I)$. We denote by $M(w)$ the string module determined by $w$. Recall that if $w$ is the trivial path at a vertex $a$, then $M(w)$ is the simple module at $a$. Otherwise $w=c_1c_2\dots c_n$ where $n\geq 1$ and $c_i$ or $c_i^{-1}$ is an arrow. For $1\leq i \leq n$, let $U_i=k$; and for $1\leq i \leq n$ we denote by $U_{c_i}$ the identity map sending $x\in U_i$ to $x\in U_{i+1}$ if $c_i$ is an arrow and otherwise the identity map sending $x\in U_{i+1}$ to $x\in U_{i}$. For a vertex $a$, if $a$ appears in $w$, then $M(w)_a$ is the direct sum of  spaces $U_i$ with $i$ such that $s(c_i)=a$ or $i=n+1$ and $e(c_n)=a$; otherwise $M(w)_a=0$. For an arrow $\alpha$, if $\alpha$ appears in $w$, then $M(w)_{\alpha}$ is the direct sum of the maps $U_{c_i}$   such that $c_i=\alpha$ or $c_i^{-1}=\alpha$; otherwise $M(w)_{\alpha}$ is the zero map. In some cases, if we deal with a projective module we write $P(w)$ instead of $M(w)$.


The following theorem of \cite{HL} gives a combinatorial characterization of the special biserial algebras such that the global dimension is at most two.

\begin{teo}\cite[Theorem 3.4]{HL}
\label{SBgldim}
Let $A=kQ/I$ be a special biserial algebra. Then the global dimension of $A$ is at most two if and only if $(Q,I)$ satisfies the following properties:
\begin{enumerate}
  \item[(GD1)] The start-point of a binomial relation does not lie in another different binomial relation.
  \item[(GD2)] There is no path of the form $p_1p_2p_3$, where $p_1,p_2,p_3$ are non-trivial paths such that $p_2$ is a string and $p_1p_2,\,p_2p_3$ are the only zero-relations contained in the path.
  \item[(GD3)] Let $(\alpha p \beta, \gamma q \delta)$ be a binomial relation, where $\alpha, \beta, \gamma, \delta$ are some arrows and $p,q$ are  paths. If $u$ is a non-zero path with $e(u)=s(\alpha)$, then either $u\alpha p$ or $u\gamma q$ is non-zero. Dually if $v$ is a non-zero path with $s(v)=e(\beta)$, then either $p\beta v$ or $q\delta v$ is non-zero.
\end{enumerate}
\end{teo}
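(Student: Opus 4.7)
Since $A=kQ/I$ is finite-dimensional, $\gld\,A = \sup_{a\in Q_0}\pd\,S(a)$, so the goal reduces to characterizing when every simple module has projective dimension at most two. My plan is to give a combinatorial description of $\Omega S(a)$ and $\Omega^2 S(a)$ in terms of strings and the relations starting at $a$, and then to match each of (GD1)--(GD3) to the three possible obstructions to $\Omega^2 S(a)$ being projective.

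For a fixed vertex $a$, let $\alpha,\beta$ be the (at most two) arrows with source $a$. I first describe $\Omega S(a)=\rad\,P(a)$, which admits two possible shapes. If no binomial relation starts at $a$, then $\Omega S(a)$ is a direct sum of at most two uniserial string modules, one per arrow out of $a$, each extended along the unique non-zero continuation guaranteed by the special biserial axiom until it meets a zero-relation. If a binomial relation $\lambda\alpha p+\mu\beta q\in I$ starts at $a$, then $\Omega S(a)$ is a biserial string module whose two uniserial legs are identified along their common socle. Applying the same description vertex-by-vertex to the tops of $\Omega S(a)$ would yield a combinatorial formula for $\Omega^2 S(a)$.

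For the implication ($\Leftarrow$), I would assume (GD1)--(GD3) and run a case analysis on whether binomial or zero-relations govern $\Omega S(a)$. Condition (GD1) ensures that the socle vertex of a biserial $\Omega S(a)$ is not itself the start of another binomial relation, so the projective cover of that socle produces no new biserial piece and its kernel splits as projectives. Condition (GD3), applied to a biserial $\Omega S(a)$ coming from $(\alpha p\beta,\gamma q\delta)$, forces that every ``backward extension'' $u$ at the start vertex gives a zero relation on at most one leg, so the kernel of the projective cover of $\Omega S(a)$ splits off as a projective summand rather than spawning a new non-projective string. Condition (GD2) handles the remaining purely zero-relation case: otherwise $M(p_2)$ would appear in $\Omega S(s(p_1))$ and $M(p_3)$ as a summand of $\Omega^2 S(s(p_1))$, which is a non-projective string. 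Each condition thus precludes the only combinatorial obstruction to $\Omega^2 S(a)$ being projective.

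For the converse I would argue by contraposition. A failure of (GD1) lets one construct a simple whose second syzygy carries an extra biserial piece; a failure of (GD2) produces the non-projective string $M(p_3)$ in $\Omega^2 S(s(p_1))$; a failure of (GD3) produces a prefix $u$ (or dually a suffix $v$) witnessing that $\Omega^2 S(s(u))$ contains a non-projective biserial string. In all three cases the simple involved has projective dimension at least three, violating $\gld\,A\leq 2$. The main technical obstacle I expect is the bookkeeping for (GD3): one must write the projective cover of the biserial $\Omega S(a)$ on the string basis and verify that the resulting kernel is projective exactly when the prefix/suffix non-vanishing conditions hold, since this is precisely the case where the two uniserial legs genuinely interact through their common socle and cannot be treated independently.
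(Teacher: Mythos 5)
This statement is not proved in the paper at all: it is imported verbatim from Huard and Liu \cite[Theorem 3.4]{HL}, so there is no internal proof to compare you against; the relevant benchmark is Huard--Liu's own argument. Your outline does resemble it in spirit --- they too reduce $\gd\,A\leq 2$ to showing $\pd\,\rad P(a)\leq 1$ for every vertex $a$ --- but they route everything through an intermediate, fully proved characterization of when a single string module has projective dimension at most one (their Lemma 2.5, with conditions (PD1)--(PD4), which the present paper itself invokes in the proof of Lemma \ref{lemabiserial1}). Your proposal essentially postulates the output of that lemma: the sentence ``applying the same description vertex-by-vertex to the tops of $\Omega S(a)$ would yield a combinatorial formula for $\Omega^2 S(a)$'' is where the entire proof lives, and it is not a vertex-local computation, because the legs of $\Omega S(a)$ are truncated by relations, so the projective cover of $\Omega S(a)$ and its kernel are governed by \emph{overlaps} of relations along paths rather than by the picture at a single vertex. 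That is precisely why (GD2) carries the exact clauses ``$p_2$ is a string'' and ``$p_1p_2,\,p_2p_3$ are the \emph{only} zero-relations contained in the path''; your sketch neither derives these clauses nor explains why nothing weaker or stronger appears.

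Two further concrete gaps. First, you misread (GD1): it forbids the start-point of a binomial relation from lying \emph{anywhere} on a different binomial relation --- including as an interior vertex of one of its maximal subpaths --- not merely from being the start-point of another one; your contraposition construction for (GD1) only treats the coincidence-of-start-points case, so that direction is incomplete. Second, your sufficiency argument matches each condition to a ``pure'' obstruction (all-binomial, all-zero-relation, prefix/suffix), but the genuinely hard configurations are mixed ones, where a zero-relation overlaps a binomial relation or where a syzygy leg ends at a vertex supporting a binomial relation; the case analyses in this paper's Lemmas \ref{SBcaso2} and \ref{lemabiserial2} (for the quotient $A/\mbox{ann}\,T$, but structurally the same kind of bookkeeping) illustrate how many such interactions must be checked and how the conditions are used jointly rather than one at a time. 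Until the formula for $\Omega^2 S(a)$ is actually established on the string basis --- including the biserial case you explicitly defer --- the equivalence is asserted rather than demonstrated, so as it stands the proposal is a plausible strategy consistent with \cite{HL}, not a proof.
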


\subsection{}

We recall that an $A$-module $M$ is said to be $\tau$-rigid if $\Hom_A(M,\tau M)=0$, where $\tau$ is the Auslander-Reiten translation.

\begin{defi}\cite[Definition 0.1]{AIR}\label{definicion support}
Let $M$ be a $\tau$-rigid module. We say that $M$ is $\tau$~-~tilting  if $M$ is $\tau$-rigid and $|M|=|A|$.
\end{defi}

By the Auslander-Reiten formula \cite[Chapter IV Theorem 2.13]{ASS} every rigid module is $\tau$-rigid and every tilting module is $\tau$-tilting. Moreover, it follows from \cite[Proposition 2.2]{AIR} that any $\tau$-tilting $A$-module is a tilting $A/\ann\,T$-module, where $\ann\,T$ denotes the annihilator of $T$.

We recall the following lemma of \cite{AIR} that shall be  useful.
%
\begin{lema}\cite[Proposition 2.4]{AIR}\label{resproytau}
Let $A$ be a finite dimensional algebra. Let $X$ be in $\emph{mod}\,A$ with a projective presentation $P_1\stackrel{p}\rightarrow P_0\rightarrow X\rightarrow 0$. For $Y\in \emph{mod}\,A$, if  $\emph{Hom}_A(p,Y)$ is surjective, then $\emph{Hom}_A(Y,\tau X)=0$. Moreover, the converse holds if the projective presentation is minimal.
\end{lema}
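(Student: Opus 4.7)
The plan is to derive the lemma from the definition of the Auslander--Reiten translate via the Auslander--Bridger transpose, $\tau X = D\,\mbox{Tr}\,X$, combined with two elementary natural isomorphisms: $\Hom_A(Q, Y) \cong Q^{*} \otimes_A Y$ for finitely generated projective $Q$ (where $Q^{*} := \Hom_A(Q,A)$), and the tensor--Hom duality $D(M \otimes_A Y) \cong \Hom_A(Y, DM)$. Together these convert surjectivity of $\Hom_A(p, Y)$ into the vanishing of $\Hom_A(Y, \tau X)$ through a cokernel computation.

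First I would apply $(-)^{*} = \Hom_A(-,A)$ to the projective presentation $P_1 \xrightarrow{p} P_0 \to X \to 0$ to obtain an exact sequence
\[
P_0^{*} \xrightarrow{p^{*}} P_1^{*} \to C \to 0,
\]
with $C := \mbox{coker}\,p^{*}$. If $p$ is a minimal presentation, then $C = \mbox{Tr}\,X$; otherwise $C$ differs from $\mbox{Tr}\,X$ by a projective summand, so $DC \cong \tau X \oplus I$ for some injective $A$-module $I$. Tensoring the cokernel sequence above on the right by $Y$ over $A$ and using the natural identification $Q^{*} \otimes_A Y \cong \Hom_A(Q, Y)$, the induced map is exactly $\Hom_A(p, Y)$, yielding
\[
\mbox{coker}\bigl(\Hom_A(P_0, Y) \xrightarrow{\Hom_A(p, Y)} \Hom_A(P_1, Y)\bigr) \;\cong\; C \otimes_A Y.
\]

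Applying the $k$-duality $D$ and using the adjunction $D(C \otimes_A Y) \cong \Hom_A(Y, DC)$ then gives
\[
D\,\mbox{coker}\,\Hom_A(p, Y) \;\cong\; \Hom_A(Y, DC) \;=\; \Hom_A(Y, \tau X \oplus I).
\]
Since $D$ is a perfect $k$-duality, the left side vanishes iff $\Hom_A(Y, \tau X \oplus I) = 0$. This already yields the unconditional direction: surjectivity of $\Hom_A(p, Y)$ forces $\Hom_A(Y, \tau X) = 0$ as a direct summand of $\Hom_A(Y, \tau X \oplus I)$. When $p$ is minimal, the injective summand $I$ disappears, $DC = \tau X$, and the equivalence becomes a genuine biconditional, delivering the converse.

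The main obstacle I expect is the naturality bookkeeping for $\Hom_A(Q, Y) \cong Q^{*} \otimes_A Y$: one must verify that tensoring $p^{*}$ by $Y$ really does produce the map $\Hom_A(p, Y)$, rather than something that merely agrees with it up to some uncontrolled identification. Once this is pinned down, the argument is purely formal, and the asymmetry between the two directions is isolated to a single place: minimality is precisely what removes the spurious injective summand $I$ from $DC$, turning the implication into an equivalence.
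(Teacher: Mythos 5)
Your proposal is correct, and since the paper gives no proof of this lemma (it is quoted verbatim from \cite[Proposition 2.4]{AIR}), the right comparison is with the cited source: your derivation via the transpose, the natural identification $\Hom_A(P,Y)\cong Y\otimes_A P^{*}$ for finitely generated projective $P$, and the duality $D(Y\otimes_A C)\cong \Hom_A(Y,DC)$, with minimality of the presentation being exactly what removes the injective summand of $DC$, is precisely the standard argument behind that result, going back to Auslander--Smal\o\ \cite{AS}. The only bookkeeping slip is a side convention: $P^{*}=\Hom_A(P,A)$ is a left $A$-module while $Y$ is a right $A$-module, so the tensor product should be written $Y\otimes_A P^{*}$ rather than $P^{*}\otimes_A Y$; this does not affect the argument.
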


\section{A general bound}\label{seccionA}
We start studying homological relations between $A$ and $A/\mbox{ann}\,T$, or more generally between $A$ and $A/J$ with $J$ a nilpotent ideal of $A$.
\subsection{Change of rings functors}
Let $A$ be an algebra and $J$ be a nilpotent ideal of $A$. Consider the algebra morphism  $\varphi:A\rightarrow A/J$ given by $a\mapsto \overline{a}=a+J$. This morphism induces an exact functor $U:\mo\,A/J\rightarrow \mo\,A$. More precisely,  for each $A/J$-module $M$  the functor $U$ gives an  $A$-module structure. Thus, we get a natural immersion of  $\mo\,A/J$ in $\mo\,A$.

On the other hand, we consider the functor $F:\mo\,A\rightarrow \mo\,A/J$, given by ${N\mapsto F(N)=N\otimes_A A/J\cong N/NJ}$. It is well-known that $(F,U)$ is an adjoint pair of functors and if $P$ is a projective $A$-module then $A/J\otimes P$ is a projective $A/J$-module.

Since $J$ is a nilpotent ideal, then  $J\subset \mbox{rad}\,A$. Thus, ${\mbox{rad}(A/J)\cong \mbox{rad}\,A/J}$.
\begin{lema}\label{cubproy1}
Let $A$ be an algebra and $J$ a nilpotent ideal of $A$. If $f:P\rightarrow M$ is a projective cover in $\emph{mod}\,A$, then ${f\otimes 1:P\otimes_A A/J\rightarrow M\otimes_AA/J}$ is a projective cover in $\emph{mod}\,A/J$.
\end{lema}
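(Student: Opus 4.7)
The plan is to identify the functor $-\otimes_A A/J$ with the quotient $-/(-J)$, and then reduce the statement to checking that the induced surjection has a small kernel. First I would observe that $f\otimes 1$ is surjective by right exactness of the tensor product, and that $P\otimes_A A/J\cong P/PJ$ is already known to be a projective $A/J$-module (as recorded in the paragraph preceding the lemma). Under the natural isomorphism $-\otimes_A A/J\cong -/(-J)$, the map $f\otimes 1$ becomes the induced surjection $\bar f:P/PJ\to M/MJ$, so it suffices to show that $\bar f$ is a projective cover in $\mo\,A/J$.

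Next I would recall the standard characterization: a surjection from a projective module is a projective cover if and only if its kernel is contained in the radical of the projective. A short diagram chase gives $\ker\bar f=(\ker f+PJ)/PJ$. Since $f$ is a projective cover in $\mo\,A$, we have $\ker f\subseteq \rad P=P\cdot\rad A$. Because $J$ is nilpotent, $J\subseteq \rad A$, so (as noted in the excerpt) $\rad(A/J)\cong \rad A/J$, which gives
\[
\rad(P/PJ)=(P/PJ)\cdot\rad(A/J)=(P\cdot\rad A+PJ)/PJ.
\]
Combining these,
\[
\ker\bar f=(\ker f+PJ)/PJ\subseteq (P\cdot\rad A+PJ)/PJ=\rad(P/PJ),
\]
which is exactly the condition needed to conclude that $\bar f$ is a projective cover.

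The only step that demands care is the bookkeeping: verifying the canonical isomorphism $P\otimes_A A/J\cong P/PJ$, checking that $f\otimes 1$ corresponds to $\bar f$ under this identification, and confirming that $\ker\bar f=(\ker f+PJ)/PJ$. None of these is a genuine obstacle; they are routine consequences of right exactness of the tensor product and the first isomorphism theorem. The conceptual content of the proof is entirely captured by the containment $\ker f\subseteq P\cdot\rad A$ together with $J\subseteq\rad A$, so I expect the argument to be quite short.
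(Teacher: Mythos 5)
Your proof is correct, and it takes a route that is dual, rather than identical, to the paper's. The paper never computes the kernel: it checks instead that $\tp(P\otimes_A A/J)\cong \tp(M\otimes_A A/J)$, by a chain of isomorphisms reducing $\tp(P/PJ)$ to $\tp P$ (the point being $PJ\subseteq \rad P$ since $J\subseteq \rad A$, so that $\rad(A/J)\cong \rad A/J$) and then using $\tp P\cong \tp M$, which is what the projective cover hypothesis contributes; it concludes by the top criterion for projective covers cited from \cite[Chapter 1, Lemma 5.6]{ASS}. You use the equivalent criterion that an epimorphism from a projective module is a cover if and only if its kernel lies in the radical, and you extract from the hypothesis the containment $\ker f\subseteq \rad P$ instead of the isomorphism of tops. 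Your key bookkeeping step, $\ker \bar f=(\ker f+PJ)/PJ$, is correct but does genuinely use surjectivity of $f$: one needs $f(PJ)=MJ$ to pull an element of $f^{-1}(MJ)$ back into $\ker f+PJ$, so it is worth making that explicit rather than calling it a diagram chase. Both arguments pivot on the same fact, $J\subseteq \rad A$ giving $\rad(A/J)\cong \rad A/J$; what your version buys is a self-contained minimality check with the kernel located explicitly, while the paper's version avoids any kernel computation at the cost of invoking the ASS lemma. One small point to note in your write-up: the equivalence ``cover $\Leftrightarrow$ kernel in radical'' holds here because all modules in $\mo\,A$ are finitely generated over a finite dimensional algebra, where superfluous submodules are exactly those contained in the radical.
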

\begin{proof}
It is clear that $f\otimes 1$ is an epimorphism and $A/J\otimes_A P$ is a projective  $A/J$~-module. It is only left to prove that $\mbox{top}(A/J\otimes_A P)=\mbox{top}(A/J\otimes_A M)$.

\begin{eqnarray*}
  \tp(P\otimes_A A/J) &\cong& \tp(P/PJ) \\
   &\cong& (P/PJ)/\rad(P/PJ) \\
   &\cong& (P/PJ)/(P/PJ\rad(A/J)) \\
   &\cong& (P/PJ)/((P/PJ)\rad A/J) \\
   &\cong& (P/PJ)/(P\rad A)/PJ) \\
   &\cong& P/\rad P \\
   &\cong& \tp P \\
   &\cong& \tp M \\
   &\cong& \tp( M\otimes_A A/J)
\end{eqnarray*}
Hence, by  \cite[Chapter 1, Lemma 5.6]{ASS} we have that $ 1\otimes f:A/J\otimes_A P\rightarrow A/J\otimes_AM$ is a projective cover in $\mo\,A/J$.
\end{proof}

The following lemma is the main key to establish a relationship between the global dimension of $A$ and the global dimension of $A/J$.

\begin{lema}\label{lempd}
Let $A$ be an algebra and $J$ a nilpotent ideal of $A$. Let $M$ be an $A$-module such that $\emph{Tor}_m^A(A/J,M)=0$ for all $m>0$. Then $\emph{pd}_AM=\emph{pd}_{A/J}(M/MJ)$.
\end{lema}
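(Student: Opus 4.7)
The plan is to show that applying the change-of-rings functor $F=-\otimes_AA/J$ to a minimal projective resolution of $M$ over $A$ produces a minimal projective resolution of $M/MJ$ over $A/J$. Equality of the projective dimensions then follows immediately, including the case in which one (and hence the other) is infinite.

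To set this up, I would fix a minimal projective resolution
\[
\cdots \longrightarrow P_n \longrightarrow P_{n-1} \longrightarrow \cdots \longrightarrow P_1 \longrightarrow P_0 \longrightarrow M \longrightarrow 0
\]
and denote by $\Omega^i M$ the $i$-th syzygy, so that minimality amounts to saying that the induced surjection $P_i \twoheadrightarrow \Omega^{i}M$ is a projective cover in $\mathrm{mod}\,A$ for every $i\geq 0$ (with the convention $\Omega^0 M = M$). The first step is to observe that the hypothesis $\tor_m^A(A/J,M)=0$ for all $m>0$, together with dimension shifting along the short exact sequences $0\to\Omega^{i+1}M\to P_i\to\Omega^i M\to 0$, propagates the vanishing to every syzygy: $\tor_m^A(A/J,\Omega^i M)=0$ for all $m>0$ and all $i\geq 0$.

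With this vanishing in hand, applying $F$ to each of those short exact sequences keeps them exact, so splicing them back together yields a projective resolution
\[
\cdots \longrightarrow F(P_n) \longrightarrow \cdots \longrightarrow F(P_1) \longrightarrow F(P_0) \longrightarrow M/MJ \longrightarrow 0
\]
of $M/MJ$ in $\mathrm{mod}\,A/J$ (each $F(P_i)$ is a projective $A/J$-module since $(F,U)$ is an adjoint pair and $F$ sends projectives to projectives). Moreover, exactness of $F$ on each short syzygy sequence gives $F(\Omega^i M)\cong \Omega^i_{A/J}(M/MJ)$.

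The main step, and the one where something non-trivial must be invoked, is minimality: I would apply Lemma \ref{cubproy1} to the projective cover $P_i\twoheadrightarrow \Omega^i M$ to conclude that $F(P_i)\twoheadrightarrow F(\Omega^i M)$ is a projective cover in $\mathrm{mod}\,A/J$, and hence by the syzygy identification just noted, the above resolution of $M/MJ$ is minimal. Consequently the two minimal resolutions have the same length, finite or infinite, which gives $\pd_A M=\pd_{A/J}(M/MJ)$. The only subtlety to watch for is the systematic use of the $\tor$-vanishing to ensure that every syzygy behaves well under $F$; once that is in place, the rest is a direct appeal to Lemma \ref{cubproy1}.
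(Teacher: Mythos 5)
Your proposal is correct and follows essentially the same route as the paper: apply $-\otimes_A A/J$ to the minimal projective resolution, use the $\mathrm{Tor}$-vanishing (shifted to the syzygies via $\mathrm{Tor}_1^A(A/J,\Omega^m M)\cong \mathrm{Tor}_{m+1}^A(A/J,M)=0$) to preserve exactness and identify $F(\Omega^m M)\cong\Omega^m_{A/J}(M/MJ)$, and invoke Lemma \ref{cubproy1} for minimality. Your only addition is making the infinite-dimensional case explicit, which the paper's finite-length formulation leaves implicit.
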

\begin{proof}
Consider $M\in \mo\,A$ with $\mbox{pd}_AM=n$. Then there exists a minimal projective resolution of $M$ as follows:
\begin{equation}\label{rp1}
  0\rightarrow P_n\stackrel{d_n}\rightarrow P_{n-1}\stackrel{d_{n-1}}\rightarrow \dots \rightarrow P_1\stackrel{d_1}\rightarrow P_0\stackrel{d_0}\rightarrow M\rightarrow 0.
\end{equation}

Since $\tor_m^A(A/J,M)=0$ for all $m>0$, applying the functor $\-- \otimes_AA/J$ to (\ref{rp1}), we obtain the exact sequence
\begin{equation}\label{rp2}
  0\rightarrow P_n\otimes_AA/J\stackrel{d_n\otimes_A1}\rightarrow  \dots \rightarrow P_1\otimes_AA/J\stackrel{d_1\otimes_A 1}\rightarrow P_0\otimes_AA/J\stackrel{d_0\otimes_A1}\rightarrow M\otimes_AA/J\rightarrow 0.
\end{equation}

On the other hand, since $\tor_1^A(A/J,\Omega_m(M))\cong\tor_{m+1}^A(A/J,M)=0$ we have that $\Omega_m( M\otimes_AA/J)\cong \Omega_m(M)\otimes A/J $. By Lemma \ref{cubproy1}, we get that ${P_m\otimes_AA/J\rightarrow \Omega_m(M\otimes_AA/J)}$ is a projective cover. Thus, the exact sequence in (\ref{rp2}) is a minimal projective resolution of $M\otimes_AA/J$.  Therefore, ${\mbox{pd}_AM=\mbox{pd}_{A/J}(M/MJ)}$.
\end{proof}

\begin{prop}\label{nilpotente}
Let $A$ be an algebra of finite global dimension and $J$ be a nilpotent ideal of $A$. Then $\emph{gldim}\, A\leq \emph{gldim}\, (A/J)+\emph{pd} _A(A/J)$.
\end{prop}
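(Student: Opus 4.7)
The plan is to reduce an arbitrary $A$-module $M$ to a high syzygy that satisfies the vanishing hypothesis of Lemma \ref{lempd}, and then use that lemma to bound its projective dimension by $\gd(A/J)$. Put $p=\pd_A(A/J)$, which is finite by the assumption that $\gd A<\infty$. Given any $M\in\mo\,A$, I would choose a minimal projective resolution and consider the $p$-th syzygy $N=\Omega^p M$; it suffices to show that $\pd_A M \leq p+\gd(A/J)$, since then taking the supremum over $M$ yields the desired inequality.

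The first step is the standard syzygy-shift identity for $\mathrm{Tor}$: from the truncated resolution of $M$ one extracts a projective resolution of $N$, and comparing the two complexes gives
\[
\tor_m^A(A/J,N)\cong \tor_{m+p}^A(A/J,M)\quad\text{for all }m\geq 1.
\]
Since $\pd_A(A/J)=p$ implies $\tor_k^A(A/J,-)=0$ for every $k>p$, the right-hand side vanishes, so $\tor_m^A(A/J,N)=0$ for all $m>0$.

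Now Lemma \ref{lempd} applies to $N$ and yields
\[
\pd_A N \;=\; \pd_{A/J}(N/NJ)\;\leq\;\gd(A/J).
\]
Finally, combining the truncated resolution of $M$ (of length $p$ ending in $N$) with a projective resolution of $N$ shows $\pd_A M\leq p+\pd_A N\leq \pd_A(A/J)+\gd(A/J)$, giving the claim.

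The only delicate point is the syzygy-shift isomorphism for $\tor$; it is well known but worth stating cleanly since the indexing is the crux of the argument. Everything else — the existence of a projective resolution of $N$ by tails of the resolution of $M$, the additivity of projective dimension under syzygies, and the direct application of Lemma \ref{lempd} — is routine.
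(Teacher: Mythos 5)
Your proposal is correct and follows essentially the same route as the paper's proof: truncate a projective resolution of $M$ at the $p$-th syzygy where $p=\pd_A(A/J)$, use the dimension-shift isomorphism $\tor_m^A(A/J,\Omega^p M)\cong \tor_{m+p}^A(A/J,M)$ to verify the vanishing hypothesis, and then apply Lemma \ref{lempd} to bound $\pd_A\Omega^p M$ by $\gd(A/J)$. The only cosmetic difference is that you state the Tor shift explicitly as the crux, whereas the paper invokes it in passing; the substance is identical.
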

\begin{proof}
Let $M\in \mo\,A$ and $n=\pd_A(A/J)$. Consider the exact sequence
\[0\rightarrow M_n\rightarrow P_{n-1}\stackrel{d_{n-1}}\rightarrow \dots \rightarrow P_1\stackrel{d_1}\rightarrow P_0\stackrel{d_0} \rightarrow M \rightarrow 0\]
with each $P_i$ a projective $A$-module for $i=1,\ldots, n-1$ and $M_n=\mbox{ker}\,d_{n-1}$.

Then,  $\tor_k^A(A/J,M_n)\cong \tor^A_{k+n}(A/J,M)=0$ because ${n=\pd_A(A/J)}$. By Lemma \ref{lempd}, we know that $\mbox{pd}_AM_n=\mbox{pd}_{A/J}(M_n/M_nJ)$. Thus, we get that
\begin{eqnarray*}
  \pd_AM &\leq & n+ \mbox{pd}_{A/J}(M_n/M_nJ)\\
   &\leq& \pd_A(A/J)+\gd (A/J).
\end{eqnarray*}
Therefore, $\gd\, A\leq \gd\, (A/J)+\pd _A(A/J)$.
\end{proof}

\subsection{The endomorphism algebra of a $\tau$-tilting module}

Given $T$ a $\tau$-tilting $A$-~module, since $T$ is sincere then $\mbox{ann}\,T$ is a nilpotent ideal. As an application of  Proposition \ref{nilpotente} we obtain a relationship between the global dimension of $A$  and the global dimension of $B$, where $B=\End_A T$.

\begin{obs}
Given $T$ a $\tau$-tilting $A$-module, since $T$ is a  $(A/\mbox{ann}\, T)$-~module, then ${B=\mbox{End}_AT\cong \mbox{End}_{\small{A/\mbox{ann}\,T}}T}$. Moreover, since $T$ is a tilting $(A/\mbox{ann}\, T)$-module we have that if $\mbox{gldim}\,  A/\mbox{ann}\, T <\infty$ then
\[|\mbox{gldim}\, A/\mbox{ann}\, T-\mbox{gldim} \,B|\leq 1.\]
Therefore, $\mbox{gldim}\, B = \infty$ if and only if $\mbox{gldim}\, A/\mbox{ann}\, T = \infty$.
\end{obs}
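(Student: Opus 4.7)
The remark packages three assertions, and the plan is to verify them in the order stated, each as a short consequence of definitions and already-cited results; no new machinery is needed.

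First, for the isomorphism $B=\End_A T\cong \End_{A/\ann T} T$, I would observe that $T\cdot \ann T=0$ by definition of the annihilator, so the right $A$-action on $T$ factors through the quotient $A\to A/\ann T$ and endows $T$ with a well-defined $(A/\ann T)$-module structure whose underlying $k$-vector space is the same as that of the $A$-module $T$. A $k$-linear map $f:T\to T$ commutes with the $A$-action precisely when it commutes with the induced $(A/\ann T)$-action, because both actions factor through the same surjection. Thus the identity on $T$ induces an isomorphism of $k$-algebras between the two endomorphism rings.

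For the inequality $|\gldim(A/\ann T)-\gldim B|\leq 1$ under the assumption $\gldim(A/\ann T)<\infty$, I would invoke the fact, recalled in the preliminaries via \cite[Proposition 2.2]{AIR}, that $T$ is a classical tilting module over $A/\ann T$ (in particular $\pd_{A/\ann T}T\leq 1$). The classical bound \cite[Chapter VI, Theorem 4.2]{ASS} then gives $\gldim B\leq \gldim(A/\ann T)+1$. For the reverse inequality, I would use that $B$ is itself a tilted algebra of $A/\ann T$: the $k$-dual $DT=\Hom_k(T,k)$ is a classical tilting $B$-module with $\End_B(DT)\cong A/\ann T$, so applying the same theorem symmetrically yields $\gldim(A/\ann T)\leq \gldim B+1$. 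Combining the two bounds proves the absolute value inequality.

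The third assertion, the equivalence $\gldim B=\infty \iff \gldim(A/\ann T)=\infty$, is then immediate: finiteness on either side of the previous inequality forces finiteness on the other, and taking contrapositives gives the equivalence. The only step that requires any care, rather than being a direct substitution, is the reverse bound in the second assertion; the rest is essentially bookkeeping once one accepts that a $\tau$-tilting $A$-module is a classical tilting $(A/\ann T)$-module.
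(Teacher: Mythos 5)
Your first and third steps are fine and coincide with the paper's (implicit) reasoning: the remark is stated without a detailed proof, resting exactly on the identification $\End_AT\cong\End_{A/\ann\,T}T$ and on the fact that $T$ is a classical tilting $(A/\ann\,T)$-module by \cite[Proposition 2.2]{AIR}, so that the absolute-value inequality is the standard two-sided bound for tilted situations. The genuine gap is in your justification of the reverse inequality $\gd\,(A/\ann\,T)\leq \gd\,B+1$: the claim that $DT=\Hom_k(T,k)$ is a \emph{classical tilting} right $B$-module is false in general. Writing $C=A/\ann\,T$, the tilting theorem makes $T$ a $B$-$C$-bimodule such that ${}_BT$ is a tilting \emph{left} $B$-module; dualizing a left module gives a right module with $\id\,(DT)_B\leq 1$, i.e.\ $DT$ is a \emph{cotilting} right $B$-module, and its projective dimension can equal $2$. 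Concretely, take $C=kQ/I$ with $Q:1\stackrel{\alpha}\rightarrow 2\stackrel{\beta}\rightarrow 3$ and $I=\langle \alpha\beta\rangle$, and $T=C$ (so $A=C$, $\ann\,T=0$, $B\cong C$): then $DT=DC$ contains $I(1)=S(1)$ with $\pd_C S(1)=2$, so $DT$ is not tilting, even though $\End_B(DT)\cong C$ does hold. As written, invoking \cite[Chapter VI, Theorem 4.2]{ASS} for $DT$ is therefore not licensed.

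The repair is standard and preserves your symmetry idea: apply the bound not to $DT$ but to the tilting left $B$-module ${}_BT$, whose endomorphism algebra is $C$ by the tilting (Brenner--Butler) theorem, and use that left and right global dimensions coincide for artin algebras; this yields $\gd\,C=\gd\,\End_B({}_BT)\leq \gd\,B+1$. (Alternatively, one may cite the dual statement for cotilting modules and apply it to $DT$.) With that substitution your argument is complete and is essentially the same route the paper relies on, namely quoting the two-sided inequality $|\gd\,C-\gd\,B|\leq 1$ for classical tilting modules over $C$; the equivalence of infinitude of the two global dimensions then follows exactly as you say.
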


As an application of Lemma \ref{nilpotente}, we obtain Theorem A.

\begin{teo}\label{cotagd1}
Let $A$ be an algebra of finite global dimension, $T$ be a $\tau$-tilting $A$-module and $B=\emph{End}_AT$. Then
\[\emph{gldim}\, A \leq \emph{gldim}\, B + \emph{pd}_A(A/\emph{ann}\, T) + 1.\]
\end{teo}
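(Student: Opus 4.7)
The strategy is to apply Proposition \ref{nilpotente} to the nilpotent ideal $J = \ann T$ and then invoke the remark immediately preceding the statement to replace the resulting bound involving $\gld(A/\ann T)$ by one involving $\gld B$.

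First, I would verify that $\ann T$ is nilpotent, which is needed in order to apply Proposition \ref{nilpotente}. This follows from the sincerity of every $\tau$-tilting $A$-module: a sincere module has annihilator contained in $\rad A$, and in a finite dimensional algebra the radical is nilpotent. Applying Proposition \ref{nilpotente} with $J = \ann T$ then gives
\[
\gld A \leq \gld(A/\ann T) + \pd_A(A/\ann T).
\]

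The second step is to replace $\gld(A/\ann T)$ by $\gld B + 1$. If $\gld B = \infty$, the claimed inequality is trivial, since $\gld A < \infty$ by hypothesis. Otherwise, by the remark preceding the theorem, $\gld(A/\ann T)$ is also finite and satisfies $|\gld(A/\ann T) - \gld B| \leq 1$; in particular $\gld(A/\ann T) \leq \gld B + 1$. Substituting into the previous display yields exactly $\gld A \leq \gld B + \pd_A(A/\ann T) + 1$.

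I do not anticipate a serious obstacle, since all the essential ingredients have already been assembled in this subsection. The mild points to handle with care are the nilpotency of $\ann T$ (via sincerity of $T$) and the case split on whether $\gld B$ is finite, so that the remark may be invoked under its stated hypothesis that $\gld(A/\ann T) < \infty$.
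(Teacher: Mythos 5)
Your proposal is correct and follows essentially the same route as the paper: nilpotency of $\mbox{ann}\,T$ via sincerity of $T$, Proposition \ref{nilpotente} applied with $J=\mbox{ann}\,T$, and the remark's tilting bound $\mbox{gldim}\,A/\mbox{ann}\,T\leq \mbox{gldim}\,B+1$. The only cosmetic difference is that you split on whether $\mbox{gldim}\,B$ is infinite while the paper splits on whether $\mbox{gldim}\,A/\mbox{ann}\,T$ is infinite; by the equivalence stated in the remark these are interchangeable, and your handling of the infinite case is equally valid.
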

\begin{proof}
Since $\ann\, T$ is a nilpotent ideal, it follows from Lemma \ref{nilpotente} that ${\gd\, A\leq \gd\, A/\ann\, T+\pd _A(A/\ann\, T)}$.
If $\gd\, A/\ann\, T=\infty$, then $\gd\,B=\infty$ and we have nothing to prove.
Otherwise, ${\gd\, A/\ann\, T \leq \gd\, B + 1}$. Then,
\begin{eqnarray*}
  \gd\, A &\leq & \gd\, A/\ann\, T+\pd _A\,(A/\ann\, T)  \\
   &\leq & \gd\, B + 1+\pd _A\,(A/\ann\, T).
\end{eqnarray*}
\end{proof}

\begin{obs}
We claim that $\mbox{pd}_A\,(A/\mbox{ann}\, T)\leq \mbox{pd}_A\, T \leq \mbox{pd}_A\,(A/\mbox{ann}\, T) + 1$. In fact, since $T$ is a tilting $(A/\mbox{ann}\, T)$-module then ${\mbox{pd}_{A/\mbox{ann}\, T} T\leq 1}$. Thus, there exists an exact sequence:
\begin{equation}\label{rp3}
  0\rightarrow P_1 \rightarrow P_0 \rightarrow T \rightarrow 0
\end{equation}
with $P_0,P_1\in \mbox{add}\, (A/\mbox{ann}\, T)$. Since  $\mbox{pd}_A\,P_i\leq \mbox{pd}_A\,(A/\mbox{ann}\, T)$, for $i=0,1$, by  \cite[Apendix 4, Proposition 4.7]{ASS} we get that $\mbox{pd}_A\, T \leq \mbox{pd}_A\,(A/\mbox{ann}\, T) + 1$.

On the other hand, there exists a short exact sequence:
\begin{equation}\label{eqninclinante}
  0\rightarrow A/\mbox{ann}\, T \rightarrow T_0 \rightarrow T_1 \rightarrow 0
\end{equation}
with $T_0,T_1\in \mbox{add}\, T$. It follows from \cite[Apendix 4, Proposition 4.7]{ASS}, that ${\mbox{pd}_A\,(A/\mbox{ann}\, T)\leq \mbox{pd}_A \,T}$, because ${\mbox{pd}_A\,T_i\leq \mbox{pd}_A\,T}$, for ${i=0,1}$.

As a consequence, there is a relationship between the bound given by Proposition \ref{cotagd1} and the bound given by Y. Miyashita in \cite{Miya} for the global dimension of the endomorphism algebra of a tilting module of finite projective dimension.
\end{obs}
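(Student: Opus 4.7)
The statement is a two-sided estimate, so the plan is to tackle each inequality separately using short exact sequences that come for free from the tilting hypothesis on $T$ viewed over $A/\ann T$. The single homological tool needed is the standard fact (Proposition 4.7 in Appendix 4 of \cite{ASS}): for a short exact sequence $0\to X\to Y\to Z\to 0$ in $\mo\,A$ one has $\pd_A X\leq \max\{\pd_A Y,\pd_A Z-1\}$ and $\pd_A Z\leq \max\{\pd_A X+1,\pd_A Y\}$. The whole argument consists in feeding two explicit short exact sequences into these inequalities.

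For the upper bound $\pd_A T\leq \pd_A(A/\ann T)+1$, I would start from the observation that $T$ is a tilting $A/\ann T$-module by \cite[Proposition 2.2]{AIR}, so $\pd_{A/\ann T}T\leq 1$. Choosing a minimal projective presentation over $A/\ann T$ gives a short exact sequence
\[0\to P_1\to P_0\to T\to 0\]
with $P_0,P_1\in\ad(A/\ann T)$; consequently $\pd_A P_i\leq \pd_A(A/\ann T)$ for $i=0,1$. Applying the second part of the homological inequality above to this sequence yields
\[\pd_A T\leq \max\{\pd_A P_1+1,\pd_A P_0\}\leq \pd_A(A/\ann T)+1.\]

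For the lower bound $\pd_A(A/\ann T)\leq \pd_A T$, the natural input is the second defining sequence of a (classical) tilting module: since $T$ is tilting over $A/\ann T$ there is a short exact sequence
\[0\to A/\ann T\to T_0\to T_1\to 0\]
with $T_0,T_1\in\ad T$, and in particular $\pd_A T_i\leq \pd_A T$. The first part of the homological inequality then gives
\[\pd_A(A/\ann T)\leq \max\{\pd_A T_0,\pd_A T_1-1\}\leq \pd_A T,\]
which is exactly the desired bound.

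There is no real obstacle here: both short exact sequences are immediate from the fact that $T$ is a tilting module over $A/\ann T$, and the inequalities on projective dimensions in short exact sequences are entirely standard. The only thing to be slightly careful about is that one must measure all projective dimensions over $A$ (not over $A/\ann T$), which is why the summands $P_i\in\ad(A/\ann T)$ in the first sequence contribute $\pd_A(A/\ann T)$ rather than $0$, and why this is precisely what produces the extra ``$+1$'' in the upper bound.
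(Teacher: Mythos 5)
Your proposal is correct and follows essentially the same route as the paper: the upper bound via the short exact sequence $0\rightarrow P_1\rightarrow P_0\rightarrow T\rightarrow 0$ coming from $\mbox{pd}_{A/\mbox{ann}\,T}\,T\leq 1$, and the lower bound via the tilting sequence $0\rightarrow A/\mbox{ann}\,T\rightarrow T_0\rightarrow T_1\rightarrow 0$, both fed into the standard projective-dimension inequalities of \cite[Apendix 4, Proposition 4.7]{ASS}. You merely spell out the $\max$-form of those inequalities more explicitly than the paper does, which is a harmless refinement.
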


In the following example we show that the bound given in Proposition \ref{cotagd1} for the global dimension of the endomorphism algebra of a $\tau$-tilting module is minimum.

\begin{ej}
Let $A=kQ/I$ be the algebra given by:
$$
{
    \xymatrix  @!0 @R=0.5cm  @C=1.2cm {
       & &   &5\ar[dl]_{\alpha}&  \\
       1&3\ar[l]_{\delta} \ar[dl]^{\epsilon} &4 \ar[l]^{\gamma}  & & 7 \ar[lu]_{\theta}\ar[ld]^{\omega} \\
      2 & & &6\ar[ul]^{\beta} & }
}
$$
\noindent with $I=<\alpha\gamma,\beta\gamma,\gamma\delta,\theta\alpha-\omega\beta>$. Observe that $\mbox{gldim}\, A =4$.

Consider
$T= 6\oplus {\small\txt{7\\5\,\,\,6}}\oplus 3 \oplus {\small\txt{3\\2}} \oplus {\small\txt{3\\1}} \oplus {\small\txt{7\\5\,\,\,6\\4}} \oplus {\small\txt{7\\6}}$ a $\tau$-tilting $A$-module.
In this case, $\mbox{ann}\, T=<\gamma>$. Then $A/\mbox{ann}\, T$ is given by the following disconnected quiver:
$$
{
    \xymatrix  @!0 @R=0.5cm  @C=1.2cm {
       & &   &5\ar[dl]_{\alpha}&  \\
       1&3\ar[l]_{\delta} \ar[dl]^{\epsilon} &4   & & 7 \ar[lu]_{\theta}\ar[ld]^{\omega} \\
      2 & & &6\ar[ul]^{\beta} & }
}
$$
with $I'=<\theta\alpha-\omega\beta>$.

The module $A/\mbox{ann}\, T \cong 1\oplus 2 \oplus {\small\txt{3\\1\,\,\,2}} \oplus 4 \oplus {\small\txt{5\\ 4}} \oplus {\small\txt{6\\4}} \oplus {\small\txt{7\\5\,\,\,6\\4}} $. Note that all the direct summands of  $A/\mbox{ann}\, T$ are projective $A$-modules except for the direct summand $S_4$. Then, ${\mbox{pd}_A(A/\mbox{ann} T)=\mbox{pd}_A S_4=2}$.

The endomorphism algebra $B=\mbox{End}_AT$ is the following disconnected hereditary algebra:
\[
{
    \xymatrix  @!0 @R=0.5cm  @C=1.2cm {
       & &  5 &&  \\
       1&3\ar[l]_{\delta} \ar[dl]^{\epsilon} &   & 6\ar[lu]\ar[ld] & 7 \ar[l] \\
      2 & &4 &  & }
}
\]
Then, $\mbox{gldim}\, B =1$. Therefore
\[4=\mbox{gldim}\, A \leq \mbox{gldim}\, B + \mbox{pd}_A\, (A/\mbox{ann}\, T) + 1 = 4.\]
\end{ej}

Given $A$ an algebra of finite global dimension, it is well-known that the global dimension of the endomorphism algebra of a tilting $A$-module is always finite, see \cite[Chapter VI, Theorem 4.2]{ASS}. More generally, the same fact occurs if we consider the global dimension of the endomorphism algebra of a tilting $A$-module of finite projective dimension, see \cite{Miya}. The global dimension of the endomorphism algebra of a $\tau$-tilting module is not always finite, as we show in the next example.

\begin{ej}\label{infinita}
Consider $A=kQ/I$ the algebra given by the following quiver:
$$
{
    \xymatrix  @!0 @R=0.5cm  @C=1.2cm {
       1\ar@<1ex>[r]^{\alpha}& 2\ar@<1ex>[l]^{\beta}\ar[rd]_{\theta}& &4\ar[ll]_{\lambda}\ar[ld]^{\omega}   \\
       & & 3&
       }
}
$$
\noindent with  $I=<\lambda\theta, \alpha\beta, \lambda\beta\alpha>$.

Observe that $\mbox{gldim}\, A =3$. Consider the $\tau$-tilting $A$-module $T= {\small{\txt{1\\2\\3}}} \oplus \hspace{-0.15in} {\small{\txt{1\,\,\,\,\,\,\,\\2\,\,\,4\\\,\,\,\,\,\,\,3\,\,\,\,2\\ \,\,\,\,\,\,\,\,\,\,\,\,\,\,\,\,\,\,\,1 }}} \hspace{-0.08in}\oplus {\small{\txt{4\\2\,\,\,3\\1\,\,\,\,\,}}} \oplus \hspace{-0.1in} {\small{\txt{\,\,\,\,\,\,\,4\,\,\,1\\3\,\,\,2}}}. $

The annihilator  of $T$ is generated by the path $\beta\alpha$. Then $A/\mbox{ann}\, T$ is given by the following quiver:
$$
{
    \xymatrix  @!0 @R=0.5cm  @C=1.2cm {
       1\ar@<1ex>[r]^{\alpha}& 2\ar@<1ex>[l]^{\beta}\ar[rd]_{\theta}& &4\ar[ll]_{\lambda}\ar[ld]^{\omega}   \\
       & & 3&
       }
}
$$
\noindent with $I'=<\lambda\theta, \alpha\beta, \beta\alpha>$.

The minimal projective resolution of $S_1\in A/\mbox{ann}\, T$ is
$$
{\xymatrix  @!0 @R=0.6cm  @C=1.2cm {
\dots \ar[r] & {\small{\txt{1\\2\\3}}}\ar[rr]\ar[rd]&& {\small{\txt{2\\1\,\,\,3}}}\ar[rr]\ar[rd]&& {\small{\txt{1\\2\\3}}}\ar[r]& S_1 \ar[r] & 0.\\
&& S_1 \ar[ru]&& {\small{\txt{2\\3}}}\ar[ru]&&&&
}}
$$
\noindent Then,  $\mbox{pd}_{A/\mbox{ann}\, T}S_1=\infty$. Thus  $\mbox{gldim}\, (A/\mbox{ann}\, T)=\infty$. Therefore, $\mbox{gldim}\, \mbox{End}_A T=\infty$.
\end{ej}

We end up this section showing a family of algebras where the global dimension of the endomorphism algebra of a $\tau$-tilting module is infinite.

In general, for all $n=k+ 3$, with $k\geq 0$, we found an algebra $A_n$ of global dimension equal to $n$ and, for each  $n$, a $\tau$-tilting $A_n$~-module $T_n$ such that $B_n=\mbox{End}\, T_n$ is of infinite global dimension. In fact, consider ${A_n=kQ_n/I_n}$, where $Q_n$ is as follows
$$
{
    \xymatrix  @!0 @R=0.7cm  @C=1.2cm {
       1\ar@<1ex>[r]^{\alpha}& 2\ar@<1ex>[l]^{\beta}\ar[rd]_{\theta}& &4\ar[ll]_{\lambda}\ar[ld]^{\omega}& a_1\ar[l]_{\mu_1} & a_2\ar[l]_{\mu_2} & \dots & a_k\ar[l]_{\mu_n}  \\
       & & 3&
       }
}
$$
and $I_n=<\lambda\theta, \alpha\beta, \lambda\beta\alpha, \mu_1\lambda, \mu_2\mu_1,\mu_3\mu_2, \dots, \mu_{k}\mu_{k-1}>$, for $k\geq1$.

Consider $U_1=\hspace{-0.17in}{\small{\txt{1\,\,\,\,\,\,\,\\2\,\,\,4\\\,\,\,\,\,\,\,3\,\,\,\,2\\ \,\,\,\,\,\,\,\,\,\,\,\,\,\,\,\,\,\,\,1 }}}$. The minimal projective presentation of  $U_1$ (in any algebra $A_n$) is
\[ P_3\rightarrow P_1\oplus P_{4} \rightarrow U_1\rightarrow 0\]
Then, we have the following exact sequence
\[0\rightarrow \tau U_1 \rightarrow I_3 \rightarrow I_1\oplus I_{4}.\]
Hence, $\tau U_1 \cong {\small{\txt{2\\3}}}$. Similarly, if we consider $U_2=\hspace{-0.15in}{\small{\txt{\,\,\,\,\,\,\,4\,\,\,1\\3\,\,\,2}}}$. The minimal projective presentation of $U_2$ is as follows
\[P_2\rightarrow P_1\oplus P_{4} \rightarrow U_2\rightarrow 0\]
Thus, we have
\[0\rightarrow \tau U_2 \rightarrow I_2 \rightarrow I_1\oplus I_{4}.\]
Hence, $\tau U_2 \cong S_2$.

Consider the module $T_n=\bigoplus\limits_{i=a_1}^{a_k}P_i\oplus U_1\oplus U_2 \oplus P_1\oplus P_3 $. Since $\tau_{A_n} T_n \cong {\small{\txt{2\\3}}} \oplus 2$, then we get that $T_n$ is a $\tau$-tilting $A_n$-module.

It is not hard to see that $\mbox{ann}\, T_n =<\beta\alpha>$. Furthermore, $\mbox{gldim}\, A/\mbox{ann}\, T=\infty$. Therefore, $\mbox{gldim}\, \mbox{End}\, T_n= \infty$.

\section{Algebras of global dimension two}
The aim of this section is to  study the global dimension of the endomorphism algebra of a $\tau$ tilting module over an  algebra of global dimension two. We focus our attention in two families of algebras; the monomials algebras and the special biserial algebras.  We prove that if $T$ is a $\tau$-tilting module over these algebras then its endomorphism algebra is of finite global dimension. We conjecture that this result holds in general for any algebra of global dimension two.

We start studying the generators of the annihilator of a $\tau$-tilting module. We show that the generators of the annihilator of a $\tau$-tilting module are related with the relations in the algebra $A$. This is the main key for our purposes.

\begin{teo}\label{anulador}
Let $A=kQ/I$ be an algebra, $T$ be a $\tau$-tilting $A$-module and ${\rho \in \emph{ann}\, T}$ be  a non-zero path of length greater than or equal to two such that no proper subpath of $\rho$ belongs to $\emph{ann}\,T$. Then one of the following conditions hold:
\begin{enumerate}[i)]
\item[i)] There exists a non-zero path $\gamma$ such that $\gamma\rho $ is a zero-relation in $A$, or
\item[ii)] There exist non-zero paths $\gamma_i, \gamma$ and non-zero scalars  $\lambda_i\in k$ such that ${\gamma\rho + \sum\limits_{i\in I}\lambda_i\gamma_i}$ is a minimal relation in $A$, where $I$ is a finite set of indices.
\end{enumerate}
\end{teo}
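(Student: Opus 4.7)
My plan is to trace the element $\rho$ through the minimal projective presentation of a distinguished summand of $T$, lift the resulting equation into $kQ$, and then use the minimality of $\rho$ to extract a relation of $A$ of the required form.

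First, set $a:=s(\rho)$ and $b:=e(\rho)$. Since $T$ is $\tau$-tilting and $\ann\,T$ is nilpotent, $A$ and $A/\ann\,T$ share the same simples, and by \cite[Proposition 2.2]{AIR}, $T$ is a tilting module over $A/\ann\,T$. Hence the tops of the indecomposable summands of $T$ exhaust all simples, so there is a unique summand $T_a$ of $T$ with $\tp(T_a)=S_a$. Let $\pi\colon P(a)\twoheadrightarrow T_a$ be its projective cover and set $t_a:=\pi(e_a)$. Because $\rho\in\ann\,T$, we have $\pi(\rho)=t_a\cdot\rho=0$, so $\rho$ lies in $\Omega T_a:=\ker\pi$.

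Next, choose a minimal projective presentation
\[
\bigoplus_{j\in J}P(c_j)\xrightarrow{\;(\omega_j)_j\;}P(a)\xrightarrow{\pi}T_a\to 0,
\]
where each $\omega_j\in e_aAe_{c_j}$ is a $k$-linear combination of non-zero paths of length $\ge 1$ from $a$ to $c_j$. Since $\rho\in\Omega T_a=\operatorname{Im}((\omega_j)_j)$, we may write $\rho=\sum_j\omega_j\,y_j$ in $A$ for suitable $y_j\in e_{c_j}Ae_b$. Expanding the $\omega_j$ and $y_j$ as $k$-linear combinations of non-zero paths in $kQ$ and lifting the identity, one obtains an element of $I$ of the form
\[
r\;:=\;\rho\;-\;\sum_m\mu_m\,p_m\gamma_m,
\]
where $p_m$ (respectively $\gamma_m$) is a non-zero path appearing in some $\omega_{j_m}$ (respectively $y_{j_m}$). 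Hence $r$ is a relation of $A$ whose list of monomials contains the path $\rho$.

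The final step is to reduce $r$ to a minimal relation while preserving the presence of a monomial term of the form $\gamma\rho$. After repeatedly discarding any proper sub-sum of $r$ already lying in $I$, one arrives at a minimal element $r'\in I$. A case analysis on $r'$ — using the minimality of $\rho$ (no proper subpath belongs to $\ann\,T$) and the $\tau$-rigidity criterion of Lemma \ref{resproytau} applied to $T_a$ — yields the dichotomy: either $r'$ is a monomial relation $\gamma\rho\in I$ with $\gamma$ a non-zero path of length $\ge 1$, giving (i); or $r'$ is a non-monomial minimal relation in which one monomial is $\gamma\rho$ for some non-zero path $\gamma$ and the remaining monomials $\gamma_i$ are non-zero paths with non-zero coefficients $\lambda_i\in k$, giving (ii).

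The principal obstacle is guaranteeing that this reduction does not erase every occurrence of the form $\gamma\rho$ from $r'$. The delicate scenario is that $\rho$ might factor as $p_m\gamma_m$ with $p_m$ a proper prefix of $\rho$ occurring in some $\omega_{j_m}$, causing cancellation of the monomial $\rho$ in $r$. Ruling this out, or otherwise showing that a composed form $\gamma\rho$ must survive, requires combining the minimality of the projective presentation of $T_a$ (which constrains the paths that may appear in the $\omega_j$) with the hypothesis that no proper subpath of $\rho$ annihilates $T$; this careful bookkeeping of multi-path factorisations is the technical heart of the proof.
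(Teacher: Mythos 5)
Your opening step contains a genuine error. It is not true that the tops of the indecomposable summands of a ($\tau$-)tilting module exhaust all the simples, so there need be no summand $T_a$ with $\tp(T_a)=S_a$: already for the APR tilt $T=P(1)\oplus S(1)$ over the path algebra of $1\to 2$, no summand has top $S(2)$. Sincerity of $T$ only gives $(T)_x\neq 0$ for every vertex $x$, not a summand with prescribed top. Consequently you cannot place $\rho$ itself in the syzygy of a projective cover $P(a)\twoheadrightarrow T_a$, and this is precisely why the conclusion of the theorem carries a left factor $\gamma$. The paper instead uses the hypothesis that no proper subpath of $\rho=\alpha_1\cdots\alpha_n$ lies in $\ann\,T$ to pick a summand $T_l$ with $\alpha_1\cdots\alpha_{n-1}\notin\ann\,T_l$, and then a non-zero path $\epsilon_1\cdots\epsilon_m$ from a vertex $x$ with $S(x)$ a summand of $\tp\,T_l$ to $a=s(\rho)$ acting nontrivially; it is the element $\epsilon_1\cdots\epsilon_m\alpha_1\cdots\alpha_n$ that lies in the kernel of the projective cover $\widetilde{g}:\widetilde{P_0}\to T_l$, and $\epsilon_1\cdots\epsilon_m$ is where the $\gamma$ of the statement comes from.

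More seriously, the step you defer in your final paragraph is not residual bookkeeping --- it is the entire content of the theorem, and your sketch omits the mechanism that closes it. The paper argues by contradiction: if neither i) nor ii) holds, then $\epsilon_1\cdots\epsilon_m\alpha_1\cdots\alpha_n$ is non-zero (failure of i)) and does not lie in $\rad(\mbox{ker}\,\widetilde{g})$ (failure of ii) rules out the radical membership equation), so $S(c)$, with $c=e(\rho)$, is a summand of $\tp(\mbox{ker}\,\widetilde{g})$ and hence $P(c)$ is a summand of $P_1$ in the minimal presentation $P_1\stackrel{f}\rightarrow\widetilde{P_0}\rightarrow T_l\rightarrow 0$, with $f(e_c)=\epsilon_1\cdots\epsilon_m\alpha_1\cdots\alpha_n$. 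The contradiction then requires a \emph{second} summand: since $\alpha_2\cdots\alpha_n\notin\ann\,T_h$ for some $T_h$, one has $(T_h)_c\neq 0$, giving $\widetilde{\theta}=(\theta,0):P_1\rightarrow T_h$ with $\theta(e_c)=t\neq 0$; the $\tau$-rigidity $\Hom_A(T_h,\tau T_l)=0$ together with Lemma \ref{resproytau} forces $\widetilde{\theta}=\psi f$, whence $t=\psi(\epsilon_1\cdots\epsilon_m\alpha_1\cdots\alpha_n)=\vartheta_{\alpha_n}\cdots\vartheta_{\alpha_1}\psi_a(\epsilon_1\cdots\epsilon_m)=0$ because $\rho\in\ann\,T_h$, a contradiction. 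Your plan of invoking Lemma \ref{resproytau} ``applied to $T_a$'' alone cannot reproduce this: the argument essentially needs the two distinct summands $T_l$ and $T_h$ extracted from the minimality hypothesis on $\rho$ (prefix and suffix not in $\ann\,T$), a hypothesis your proposal cites but never uses concretely. Both the setup and the decisive cancellation step are therefore genuine gaps, not technicalities.
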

\begin{proof}
Assume that conditions $i)$ and $ii)$ do not hold.

Let $T=\bigoplus\limits_{i=1}^{r}T_i$ be a $\tau$-tilting $A$-module, with each $T_i$ an indecomposable $A$-module for ${i=1,\dots,r}$. Let $\rho=\alpha_1\dots \alpha_n$ be a path in $Q$ from $a$ to $c$, with $\alpha_j\in Q_1$ for $j=1,\dots,n$, such that $\rho\in \ann\,T$  and where no proper subpath of $\rho$ belongs to $\ann\,T$. In particular, ${\alpha_1\ldots\alpha_{n-1}\notin \ann\, T}$ and ${\alpha_2\ldots \alpha_n\notin \ann\, T}$. Since ${\ann\, T =\bigcap\limits_{j=1}^r \ann\, T_j}$, then there exist ${T_l=((T_l)_i,\varphi_{\lambda})_{i\in Q_0,\,\lambda\in Q_1}}$ and ${T_h=((T_h)_i,\vartheta_{\lambda})_{i\in Q_0,\,\lambda\in Q_1}}$ indecomposable direct summands of $T$ such that ${\alpha_1\ldots\alpha_{n-1}\notin \ann\, T_l}$ and ${\alpha_2\ldots \alpha_n\notin \ann\, T_h}$.

Let ${d_i=\mbox{dim}_k(T_l)_i}$ and $P(i)$ be the indecomposable projective $A$-module corresponding to the vertex $i$. Consider ${P_0=\bigoplus\limits_{i\in Q_0}d_iP(i)}$, where $d_iP(i)$ is the direct sum of  $d_i$ copies of $P(i)$. To define a morphism $g:P_0\rightarrow T_l$ we introduce specific bases for each of the representations $T_l$ and $P_0$. For each $i\in Q_0$, let  $\{m_{i_1},\ldots,m_{i_{d_i}}\}$ be a basis for $(T_l)_i$, and thus
$B'=\{m_{ij}\,| \,i\in Q_0,\, j=1,\ldots,d_i\}$
is a basis for $T_l$. For the projective modules we consider the basis defined in \cite[Chapter III, Lemma 2.1]{ASS}. Let
\[B=\{c_{ij}\,|\,i\in Q_0,\, c_i \,\,\, \textrm{a path with}\,\,\, s(c_i)=i,\,j=1,\ldots,d_i \}\]
be a basis of $P_0$. We define $g:P_0\rightarrow T_l$ on the basis $B$ as follows:
 \[g(c_{ij})=\varphi_{c_i}(m_{ij})\in (T_l)_{t(c_i)}\]
 and we extend $g$ linearly to $P_0$.
It follows that  $g$ is an epimorphism, but in general it is not minimal. If we denote by $(\widetilde{P_0},\widetilde{g})$ the projective cover of $T_l$, there exists a commutative diagram:
$$
{
    \xymatrix  @!0 @R=1.5cm  @C=2cm {
      P_0\ar[r]^{g}\ar[d]^{\pi}&T_l\ar[r]\ar[d]& 0 \\
      \widetilde{P_0}\ar[r]^{\widetilde{g}}&T_l\ar[r]&0
     } }
$$
such that $g=\widetilde{g}\pi$ y $\widetilde{P_0} $ is a direct summand of $P_0$.

Since $\alpha_1\ldots\alpha_{n-1}\notin \ann\, T_l$, we have that $\varphi_{\alpha_{n-1}}\dots \varphi_{\alpha_1}\neq 0$. Moreover, there exists a non-zero path  $\epsilon_1\dots\epsilon_m$ from $x$ to $a$ in $Q$ such that $S(x)$ is a direct summand of $\mbox{top}\,T_l$ and $\varphi_{\alpha_{n-1}}\dots \varphi_{\alpha_1}\varphi_{\epsilon_m}\dots\varphi_{\epsilon_1}\neq 0$. Thus, there exists a non-zero element $m_{xr_x}$ of the basis of $(T_l)_x$ such that $\varphi_{\rho}\varphi_{\epsilon_m}\dots\varphi_{\epsilon_1}(m_{xr_x})=m\neq 0$.

Since $P(x)$ is a direct summand of $\widetilde{P_0}$, $g$ and $\widetilde{g}$ coincide in $\widetilde{P_0}$, then
\[\widetilde{g}(e_x)=g(c_{xr_x})=\varphi_{e_x}(m_{xr_x})=m_{xr_x}.\]
Then, $\widetilde{g}(\epsilon_1\dots \epsilon_m\alpha_1\dots\alpha_{n-1})=\varphi_{\epsilon_1\dots \epsilon_m\alpha_1\dots\alpha_{n-1}}(m_{xr_x})=m\neq 0$. Moreover, since $\alpha_1\ldots\alpha_{n-1}\alpha_n$ does not satisfy $i)$, then the path $\epsilon_1\dots \epsilon_m\alpha_1\dots\alpha_{n}$ is non-zero and $\widetilde{g}(\epsilon_1\dots \epsilon_m\alpha_1\dots\alpha_{n})=\varphi_{\alpha_n}\ldots \varphi_{\alpha_1}\varphi_{\epsilon_m}\dots\varphi_{\epsilon_1}(m_{xr_x})=0$, because $\varphi_{\alpha_n}\dots \varphi_{\alpha_1}=0$. Therefore, $0\neq \epsilon_1\dots \epsilon_m\alpha_1\dots\alpha_{n} \in \mbox{ker}\,\widetilde{g}$.

We claim that $\epsilon_1\dots \epsilon_m\alpha_1\dots\alpha_{n}\notin \mbox{rad}(\mbox{ker}\,\widetilde{g})$ and, in consequence $S(c)$ is a direct summand of $\mbox{top}\,(\mbox{ker}\,\widetilde{g})$. Indeed, since $\epsilon_1\dots \epsilon_m\alpha_1\dots\alpha_{n}\in (\mbox{ker}\,\widetilde{g})_c$, according to \cite[Chapter III, Lemma 2.2]{ASS} we have that
\begin{equation*}
\epsilon_1\dots \epsilon_m\alpha_1\dots\alpha_{n}\in \sum\limits_{\lambda:\bullet\rightarrow c }\mbox{Im}(\psi_{\lambda}:(\mbox{ker}\,\widetilde{g})_{\bullet}\rightarrow (\mbox{ker}\,\widetilde{g})_c).
\end{equation*}
In the representation of $\mbox{ker}\,\widetilde{g}$, the morphisms $\psi_{\lambda}$ are the restriction of the morphisms in  $\widetilde{P_0}$, i.e, the multiplication by arrows. If  $\epsilon_1\dots \epsilon_m\alpha_1\dots\alpha_{n}\in \mbox{rad}\,(\mbox{ker}\,\widetilde{g})$, then
\begin{equation*}
\epsilon_1\dots \epsilon_m\alpha_1\dots\alpha_{n}= \sum\limits_{\lambda:y\rightarrow c }\lambda\eta_y.
\end{equation*}
with $\eta_y\in (\mbox{ker}\widetilde{g})_y$, getting  a contradiction, because condition $ii)$ does not hold. Therefore, $S(c)$ is a direct summand of $\mbox{top}\,(\mbox{ker}\,\widetilde{g})$.

If $P_1\stackrel{f}\rightarrow \widetilde{P_0}\stackrel{\widetilde{g}}\rightarrow T_l\rightarrow 0$ is the minimal projective presentation of $T_l$, then $P(c)$ is a direct summand of $P_1$. Moreover, by construction,  ${f(e_c)=\epsilon_1\dots \epsilon_m\alpha_1\dots\alpha_{n}}$.

On the other hand, since $\alpha_2\ldots \alpha_n\notin \ann\, T_h$, then $\vartheta_{\alpha_n}\ldots\vartheta_{\alpha_2}\neq 0$. Thus, $S(c)$ is a composition factor of $T_h$. Then, there exists a non-zero element $t\in (T_h)_c$. We define $\theta:P_c\rightarrow T_h$ such that $\theta(e_c)=t\neq0$ and if $w$ is a path starting at the vertex $c$, $\theta(w)=\vartheta_w(t)$. Consider the morphism ${\widetilde{\theta}:P_1=P(c)\oplus P_1'\rightarrow T_h}$ defined by $\widetilde{\theta}=(\theta, 0)$.

Since $\Hom_A(T_h,\tau_AT_l)=0$, by Proposition \ref{resproytau} the morphism \[\Hom_A(\widetilde{P_0},T_h)\stackrel{(f,Y)}{\rightarrow}\Hom_A(P_1,T_h)\]
is surjective. Then, there exists a morphism $\psi:\widetilde{P_0}\rightarrow T_h$ such that $\widetilde{\theta}=\psi f$. In particular,  $t=\widetilde{\theta}(e_c,0)=\psi f (e_c)=\psi (\epsilon_1\dots \epsilon_m\alpha_1\dots\alpha_{n})$.

We have the following commutative diagram:
$$
{
    \xymatrix  @!0 @R=1.5cm  @C=2cm {
      (\widetilde{P_0})_a \ar[r]^{*\alpha_1}\ar[d]^{\psi_a}&\dots\dots\ar[r]^{*\alpha_n}&(\widetilde{P_0})_c \ar[d]^{\psi_c} \\
      (T_h)_a\ar[r]^{\varphi_{\alpha_1}}&\dots\dots\ar[r]^{\varphi_{\alpha_n}}&(T_h)_c
     } }
$$
It follows that $\psi_c *\alpha_n\dots *\alpha_1=\varphi_{\alpha_n}\ldots \varphi_{\alpha_1}\psi_a$, where $*\alpha_i$ is the morphism of the representation of $P_0$ which is the multiplication by $\alpha_i$. Then,
\[\psi_c *\alpha_n\dots*\alpha_1(\epsilon_1\dots  \epsilon_m)=\psi_c(\epsilon_1\dots\epsilon_m\alpha_1\dots\alpha_n)=t\]
\[\varphi_{\alpha_n}\dots \varphi_{\alpha_1}\psi_a(\epsilon_1\dots\epsilon_m)=0.\]
Hence $t=0$, which is a contradiction.
\end{proof}

Next, we show an example of certain $\tau$-tilting module that satisfies statements $i)$ and $ii)$ of the above proposition.

\begin{ej}
Let $A=kQ/I$ be the algebra given by the following quiver:
$$
{
    \xymatrix  @!0 @R=0.8cm  @C=0.9cm {
      &2\ar[r]^{\gamma}&4\ar[rd]^{\lambda}&&&\\
      1\ar[rd]^{\beta}\ar[ru]^{\alpha}&&&5\ar[r]^{\epsilon}&6\ar[r]^{\mu}&7\\
      &3\ar[rru]^{\delta}&&&&
     } }
$$
where $I=<\alpha\gamma\lambda-\beta\delta,\,\lambda\epsilon\mu>$.

Consider the $\tau$-tilting $A$-module $$T=\txt{1\\2\\4}\oplus \txt{4\\5\\6}\oplus \txt{6\\7}\oplus \txt{1\\3}\oplus 7\oplus 4 \oplus 1.$$
If we compute the annihilator of $T$, we get that $\mbox{ann}\,T=<\delta, \gamma\lambda, \epsilon\mu>$. Thus, the path $\gamma\lambda$ is associated to the minimal relation $\alpha{\gamma\lambda}-\beta\delta$, and $\epsilon\mu$ is related with the zero-relation $\lambda{\epsilon\mu}$.
\end{ej}

In the next result we give a nice presentation of $A/\ann\,T$, when $T$ is a $\tau$-tilting module.

\begin{lema}\label{presentacion}
Let $A$ be an algebra and $\eta_A:kQ_A\rightarrow A$ be a presentation of $A$, with ${I_A=\emph{ker}\,\eta_A}$. Let $T$ be a $\tau$-tilting module such that $\emph{ann}\,T$ is generated by paths. Then there exits a presentation ${\eta_{A/\emph{ann}\,T}:kQ_{A/\emph{ann}\,T}\rightarrow A/\emph{ann}\,T}$ such that $\eta_{A/\emph{ann}\,T}\widetilde{\pi}=\pi\eta_A$, where ${\pi:A\rightarrow A/\emph{ann}\,T}$ is the projection morphism and ${\widetilde{\pi}:kQ_A\rightarrow kQ_{A/\emph{ann}\,T}}$ is a surjective map. Moreover, $$I_{A/\emph{ann\,T}}=\emph{ker}\,\eta_{A/\emph{ann}\,T}=\langle I_A\cap kQ_{A/\emph{ann}\,T},\,\, \emph{ann}\,T\cap kQ_{A/\emph{ann}\,T}\rangle.$$
\end{lema}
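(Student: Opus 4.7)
The plan is to build $\eta_{A/\ann T}$ explicitly by first identifying the bound quiver of $A/\ann T$ in terms of that of $A$, then defining $\widetilde\pi$ as the natural projection that kills the arrows absorbed into $\ann T$, and finally chasing the factorization to read off the kernel.

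The first step is to describe the quiver $Q_{A/\ann T}$. Since $T$ is $\tau$-tilting it is sincere, so every simple $A$-module is a simple $(A/\ann T)$-module and the two quivers have the same vertex set. For the arrows, note that $\ann T\subseteq \rad A$ (being nilpotent), so $\rad(A/\ann T)/\rad^2(A/\ann T)\cong \rad A/(\rad^2 A+\ann T)$. Because $\ann T$ is generated by paths, its image in $\rad A/\rad^2 A$ is spanned by those generators of length one; hence, letting $X\subseteq (Q_A)_1$ denote the set of arrows of $Q_A$ that lie in $\ann T$, the arrows of $Q_{A/\ann T}$ are in natural bijection with $(Q_A)_1\setminus X$.

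With this identification, I would define $\widetilde\pi:kQ_A\to kQ_{A/\ann T}$ on generators by $e_i\mapsto e_i$, by $\alpha\mapsto \alpha$ for $\alpha\in (Q_A)_1\setminus X$, and by $\alpha\mapsto 0$ for $\alpha\in X$. This is a surjective $k$-algebra morphism with $\ker\widetilde\pi=\langle X\rangle_{kQ_A}$. Since $X\subseteq \ann T$, we have $\eta_A(\langle X\rangle)\subseteq \ann T$, so $\pi\eta_A$ vanishes on $\ker\widetilde\pi$. By the universal property of the quotient there is a unique $k$-algebra map $\eta_{A/\ann T}:kQ_{A/\ann T}\to A/\ann T$ with $\eta_{A/\ann T}\widetilde\pi=\pi\eta_A$, and $\eta_{A/\ann T}$ is surjective because $\pi\eta_A$ is.

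For the kernel, identify $kQ_{A/\ann T}$ with the subalgebra $\iota(kQ_{A/\ann T})\subseteq kQ_A$ spanned by the paths avoiding arrows of $X$; this yields a $k$-vector space decomposition $kQ_A=\iota(kQ_{A/\ann T})\oplus \langle X\rangle$ and the relation $\widetilde\pi\circ\iota=\id$. An element $u\in kQ_{A/\ann T}$ lies in $\ker\eta_{A/\ann T}$ iff $\eta_A(\iota(u))\in \ann T$, i.e.\ iff $\iota(u)\in \eta_A^{-1}(\ann T)=I_A+J$, where $J$ is the ideal of $kQ_A$ generated by the path lifts $\widetilde\rho_1,\dots,\widetilde\rho_s$ of a set of path generators of $\ann T$. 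Applying $\widetilde\pi$ gives $u=\widetilde\pi(r)+\widetilde\pi(j)$ with $r\in I_A$ and $j\in J$; a lift $\widetilde\rho_i$ that happens to be an arrow belongs to $X$ and is killed, while the remaining generators have length $\geq 2$ and, by minimality (no proper subpath of a generator is in $\ann T$), contain no arrow of $X$, hence already lie in $\iota(kQ_{A/\ann T})$. This identifies $\widetilde\pi(J)$ with the ideal in $kQ_{A/\ann T}$ generated by $\ann T\cap kQ_{A/\ann T}$, and $\widetilde\pi(I_A)$ with the ideal generated by $I_A\cap kQ_{A/\ann T}$, giving the desired description.

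The main obstacle is the kernel identification: one must argue that, under $\widetilde\pi$, the ``good'' parts of elements of $I_A$ and the ``good'' parts of elements of $J$ recombine into the two claimed pieces without leftover cross terms. This is where the hypothesis that $\ann T$ is generated by paths (forcing the arrow generators to coincide with $X$ and the longer generators to lie entirely in $\iota(kQ_{A/\ann T})$) plus the direct-sum decomposition $kQ_A=\iota(kQ_{A/\ann T})\oplus \langle X\rangle$ do the work; without path-generation of $\ann T$, the description of the arrows and the clean splitting of $J$ would both fail.
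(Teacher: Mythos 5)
Your construction of $\widetilde\pi$ and the factorization $\eta_{A/\ann\,T}\widetilde\pi=\pi\eta_A$ via the universal property coincide exactly with the paper's. Where you diverge is in identifying the quiver: the paper does \emph{not} compute radical layers; instead it proves that $\ker\eta_{A/\ann\,T}$ is an admissible ideal (the inclusion $\ker\eta_{A/\ann\,T}\subset r_Q^2$ by a linear-independence argument modulo $I_A$, and $r_Q^n\subset\ker\eta_{A/\ann\,T}$ from $r_{Q_A}^n\subset I_A$) and then invokes uniqueness of the ordinary quiver to conclude $Q=Q_{A/\ann\,T}$. Your radical-layer computation $\rad(A/\ann\,T)/\rad^2(A/\ann\,T)\cong\rad A/(\rad^2 A+\ann\,T)$ is a legitimate alternative and identifies the quiver more directly; what it does not deliver is the admissibility of the kernel, which the paper's route gives for free and which matters downstream (e.g., when $A/\ann\,T$ is treated as a monomial or special biserial algebra via this presentation). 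Also note that your minimality assumption on the path generators of $\ann\,T$ is not among the lemma's hypotheses, though it can always be arranged by replacing a generator by a proper subpath lying in $\ann\,T$.

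The genuine flaw is in your last step: the identification $\widetilde\pi(I_A)=\langle I_A\cap kQ_{A/\ann\,T}\rangle$ is false. Since $\widetilde\pi$ kills the arrows in $X$, a relation of $A$ mixing killed and surviving arrows has an image that need not lie in the ideal generated by $I_A\cap kQ_{A/\ann\,T}$. This is not a pathological scenario but exactly the situation of Theorem \ref{anulador}(ii); indeed the paper's own example following that theorem provides a counterexample: there $\ann\,T=\langle\delta,\gamma\lambda,\epsilon\mu\rangle$, so $\delta\in X$, and for the relation $r=\alpha\gamma\lambda-\beta\delta\in I_A$ one gets $\widetilde\pi(r)=\alpha\gamma\lambda$, which does not belong to $\langle I_A\cap kQ_{A/\ann\,T}\rangle$. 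The conclusion of the lemma nevertheless survives, because the cross terms land in the \emph{other} generator set: writing $r=\widetilde\pi(r)+r''$ with $r''\in\langle X\rangle\subseteq\eta_A^{-1}(\ann\,T)$, one has $\eta_A(\widetilde\pi(r))=-\eta_A(r'')\in\ann\,T$, so $\widetilde\pi(r)\in\ann\,T\cap kQ_{A/\ann\,T}$ (in your example, $\alpha\gamma\lambda\in\ann\,T$ since $\gamma\lambda\in\ann\,T$); the same one-line argument handles $\widetilde\pi(J)$ without any appeal to minimality of the generators. This repaired containment is, in substance, what the paper's own converse inclusion does: from $y\in\ker\eta_{A/\ann\,T}$ it concludes $\eta_A(y)\in\ker\pi=\ann\,T$ and places $y$ directly in $\langle I_A\cap kQ_{A/\ann\,T},\,\ann\,T\cap kQ_{A/\ann\,T}\rangle$, rather than splitting the kernel into separate images of $I_A$ and $J$ as you attempt. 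So your framework is sound and arguably cleaner than the paper's in the quiver identification, but the final ``recombination'' claim must be restated as the containment above, not as an equality of the two pieces separately.
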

\begin{proof}
Let $Q$ be the quiver define as follows:
\begin{eqnarray*}
  Q_0 &=& (Q_A)_0. \\
  Q_1 &=& (Q_A)_1\setminus \{\alpha\in (Q_A)_1 \mid \, \alpha\in \ann\,T\}.
\end{eqnarray*}
Then we have a surjective map $\widetilde{\pi}:kQ_A\rightarrow kQ$ as follows:
\begin{eqnarray*}
  \widetilde{\pi}(e_x) &=& e_x \,\,\mbox{for all}\,\, x\in (Q_A)_0. \\
  \widetilde{\pi}(\alpha) &=& 0 \,\,\mbox{if}\,\, \alpha\in \ann\,T\cap (Q_A)_1.\\
   \widetilde{\pi}(\beta) &=& \beta \,\, \mbox{if} \,\, \beta \in (Q_A)_1 \,\, \mbox{and} \,\, \beta\notin \ann\,T.
\end{eqnarray*}
It is clear that ${\mbox{ker}\,\widetilde{\pi}=\langle\alpha\in (Q_A)_1 \,\mid\,\alpha \in \ann\,T\rangle}$. Consider ${\pi:A\rightarrow A/\ann\,T}$ the usual projection. For $\alpha\in \ann\,T$ we have that ${\pi\eta_A(\alpha)=0}$. Thus, ${\mbox{ker}\,\widetilde{\pi}\subset \mbox{ker}\,\pi\eta_A}$. Then, there exists a unique morphism ${\eta_{A/\ann\,T}:kQ\rightarrow A/\ann\,T}$ such that ${\eta_B\widetilde{\pi}=\pi\eta_A}$, as we can see in the following diagram:
$$\xymatrix @1 @R=0.4cm  @C=0.8cm{
kQ_A\ar[dd]_{\widetilde{\pi}} \ar[rr]^{\pi\eta_A}&&A/\ann\,T\\
&&\\
kQ\ar@{-->}[uurr]_{\eta_{A/\tiny{\mbox{ann}}\,T}}
}$$
We claim that $I_{A/\tiny{\ann}\,T}=\mbox{ker}\,\eta_{A/\tiny{\ann}\,T}$ is an admissible ideal. 

First, we prove that $I_{A/\tiny{\ann}\,T} \subset r_Q^2$. Assume that $I_{A/\tiny{\ann}\,T}$ is not included in $r_Q^2$. Consider $\gamma \in I_{A/\tiny{\ann}\,T}$ and $\gamma\notin  r_Q^2$. Then, there exist arrows $\beta_1,\dots, \beta_m$  in $Q$, $c_1,\dots,c_m$ non-zero scalars and $\rho\in r_Q^2$ such that
$$\gamma=\sum\limits_{i=1}^mc_i\beta_i + \rho. $$
Consider $\gamma$  an element of $kQ_A$. We get that
$$\pi \eta_A (\gamma)=\eta_{A/\tiny{\ann}\,T}\widetilde{\pi}(\gamma)=\eta_{A/\tiny{\ann}\,T}(\gamma)=0.$$
Hence, $\eta_A(\gamma)= \gamma + I_A \in \mbox{ker}\,\pi = \ann \,T$. Then there exist $a_1,\dots, a_l$ non-zero scalars, $\alpha_1,\dots, \alpha_l$ arrows that belong to $\ann\,T$ and $\lambda\in \ann\,T \cap r_{Q_A}^2$ such that
$$\gamma + I_A = \sum\limits_{j=1}^{l}a_j\alpha_j + \lambda + I_A.$$
Since $I_A$ is an admissible ideal and $\rho, \lambda \in r_{Q_A}^2$ the equation
$$\sum\limits_{i=1}^mc_i\beta_i + \rho+I_A = \sum\limits_{j=1}^{l}a_j\alpha_j + \lambda + I_A$$
implies that $\sum\limits_{i=1}^mc_i\beta_i=\sum\limits_{j=1}^{l}a_j\alpha_j$, which is a contradiction because the arrows $\alpha_j$ are not in $Q$, proving that $I_{A/\tiny{\ann}\,T} \subset r_Q^2$.

On the other hand, there exists $n\in \mathds{N}$ such that $r_{Q_A}^n \subset I_A$. Since $Q$ is a subquiver of $Q_A$ we have that $r_{Q}^n\subset r_{Q_A}^n$. Thus, $r_{Q}^n \subset I_A$. Let $x\in r_{Q}^n $, then
$$\eta_{A/\tiny{\ann}\,T}(x)=\eta_{A/\tiny{\ann}\,T}\widetilde{\pi}(x)= \pi\eta_A(x)=0$$
because $x\in I_A$. Hence, $r_Q^n \subset I_{A/\tiny{\ann}\,T}$ and $I_{A/\tiny{\ann}\,T}$ is an admissible ideal.

Since the quiver of an algebra is uniquely determined, then ${Q=Q_{A/\tiny{\ann}\,T}}$ and ${\eta_{A/\tiny{\ann}\,T}:kQ\rightarrow A/\ann\,T}$ is a presentation of $A/\ann\,T$.

Next, we prove that $I_{A/\tiny{\ann}\,T}=\mbox{ker}\eta_{A/\tiny{\ann}\,T}= \langle I_A\cap kQ_{A/\tiny{\ann}\,T},\,\, \mbox{ann}\,T\cap kQ_{A/\tiny{\ann}\,T}\rangle$. Let ${y\in I_{A/\tiny{\ann}\,T}}$. Then,
$$0=\eta_{A/\tiny{\ann}\,T}(y)=\eta_{A/\tiny{\ann}\,T}(\widetilde{\pi}(y))=\pi\eta_A(y).$$
Thus, $\eta_A(y)\in \mbox{ker}\,\pi = \ann\,T$. Then there exists ${z\in \ann\,T}$ such that ${\eta_A(y)=y+I_A=z}$. Hence, ${y\in <I_A\cap kQ_{A/\tiny{\ann}\,T},\,\, \mbox{ann}\,T\cap kQ_{A/\tiny{\ann}\,T}>}$.

Conversely, let $z\in <I_A\cap kQ_{A/\tiny{\mbox{ann}}\,T},\,\, \mbox{ann}\,T\cap kQ_{A/\tiny{\mbox{ann}}\,T}>$. Then $z=z_1+z_2$, with $z_1\in I_A\cap kQ_{A/\tiny{\ann}\,T}$ and $z_2\in \ann\,T \cap kQ_{A/\tiny{\ann}\,T}$. Thus
\begin{eqnarray*}
  \eta_{A/\tiny{\ann}\,T}(z) &= & \eta_{A/\tiny{\ann}\,T}(\widetilde{\pi}(z)) \\
   &=& \pi \eta_A (z) \\
   &=& \pi (\eta_A(z_1))+\pi( \eta_A (z_2))\\
   &=& 0.
\end{eqnarray*}
Therefore, $z\in I_{A/\tiny{\ann}\,T}$.
\end{proof}

\subsection{Monomial algebras of global dimension two}

We start  proving that the annihilator of a $\tau$-tilting module over a monomial algebra is an ideal generated by paths.

 \begin{lema}\label{anuladormonomial}
Let $A=kQ/I$ be a monomial algebra and $T$ be a $\tau$-tilting  $A$-module. Suppose that $\sum\limits_{i=1}^m\lambda_i\rho_i\in \emph{ann}\, T$, with $\rho_i$ non-zero paths of length  greater than or equal to two and $\lambda_i\in k$ for $i=1,\dots,m$. Then, $\rho_i\in\emph{ann}\, T$ for $i=1,\ldots, m$.
 \end{lema}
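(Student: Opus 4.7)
My plan is to adapt the strategy of Theorem \ref{anulador}, using the simplification that for a monomial algebra no minimal relation of type (ii) in that theorem can occur. First, since $\ann\,T$ is a two-sided ideal, applying the projections $e_a(-)e_b$ decomposes $\sum_i\lambda_i\rho_i$ into parallel-path pieces, each lying in $\ann\,T$, so I may assume all $\rho_i$ share a common source $a$ and common target $b$. Arguing by contradiction, I remove from the sum every term whose path is already in $\ann\,T$ (the remaining sum still lies in $\ann\,T$), so I may assume no $\rho_i$ belongs to $\ann\,T$ and seek a contradiction.

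Choose an indecomposable summand $T_l$ of $T$ with $\rho_1\notin\ann\,T_l$. Since the representation $T_l$ is generated by its top, there is a vertex $v$ with $S(v)\in\tp(T_l)$ and a non-zero path $\epsilon:v\to a$ in $A$ such that $\varphi_{\rho_1}\varphi_\epsilon(m_v)\neq 0$, where $m_v\in(T_l)_v$ is the image of $e_v$ under the projective cover. Let $P_1\stackrel{f}\to P_0\stackrel{g}\to T_l\to 0$ be the minimal projective presentation of $T_l$ and consider
\[\xi\;=\;e_v\,\epsilon\,\rho\;=\;\sum_{i=1}^{m}\lambda_i\,e_v\,\epsilon\,\rho_i\;\in\;P_0.\]
Monomiality ensures that each $e_v\epsilon\rho_i$ is either zero or a distinct non-zero path in the summand $P(v)\subset P_0$; the $i=1$ term is non-zero by the choice of $\epsilon$, so $\xi\neq 0$. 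Also $g(\xi)=\varphi_\rho\varphi_\epsilon(m_v)=0$ because $\rho\in\ann\,T_l$, so $\xi\in\ker g$. The main step, and the main obstacle, is to prove $\xi\notin\rad(\ker g)$. The argument follows the coefficient-matching strategy of the proof of Theorem \ref{anulador}: an expression $\xi=\sum_\alpha\eta_\alpha\alpha$ with each $\eta_\alpha\in\ker g$ (sum over arrows $\alpha$ ending at $b$) would force, in the path basis of $P(v)$, the $P(v)$-component of $\eta_\alpha$ to equal $\sum_{i:\,\text{last}(\rho_i)=\alpha}\lambda_i\,e_v\epsilon\rho_i'$ modulo paths $\sigma$ with $\sigma\alpha=0$ in $A$; lifting this identity to $kQ$ and invoking monomiality---which prohibits the case~(ii) minimal relations of Theorem \ref{anulador}---produces a contradiction with the non-vanishing of $\varphi_{\rho_1}\varphi_\epsilon(m_v)$.

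Granting $\xi\notin\rad(\ker g)$, it follows that $S(b)$ is a direct summand of $\tp(\ker g)=\tp(P_1)$; hence $P(b)$ is a direct summand of $P_1$ with $f(e_b)=\xi$. Since $T$ is sincere, there is a summand $T_h$ of $T$ and $0\neq t\in(T_h)_b$. Define $\theta:P(b)\to T_h$ by $\theta(e_b)=t$ and extend by zero on the other summands of $P_1$ to $\widetilde\theta:P_1\to T_h$; applying Lemma \ref{resproytau} to the $\tau$-rigid module $T$ produces $\psi:P_0\to T_h$ with $\widetilde\theta=\psi f$. Then
\[t\;=\;\widetilde\theta(e_b)\;=\;\psi(\xi)\;=\;\psi(e_v)\cdot\epsilon\rho\;=\;0,\]
because $\epsilon\rho\in\ann\,T\subseteq\ann\,T_h$, contradicting $t\neq 0$.
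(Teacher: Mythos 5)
Your proposal follows the paper's proof plan step for step: choose a summand $T_l$ with $\rho_1\notin\ann\,T_l$, transport $\rho$ into the projective cover along a path $\epsilon$ from a top vertex, show the resulting element $\xi\in\ker g$ avoids $\rad(\ker g)$, deduce that $P(b)$ is a direct summand of $P_1$ with $f(e_b)=\xi$, and contradict Lemma \ref{resproytau} via the map $\theta$. Your preliminary reductions (idempotent truncation to parallel paths, discarding the terms already in $\ann\,T$) are sound additions, and your endgame computation $t=\psi(\xi)=\psi(e_v)\epsilon\rho=0$ is correct and in fact tidier than the paper's diagram chase.

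However, the step you yourself flag as ``the main obstacle'' is never actually proved, and the contradiction you announce there does not follow from the data you invoke. Your coefficient matching is right as far as it goes: membership $\xi\in\rad(\ker g)$ forces each partial sum $\eta_\alpha=\sum_{i:\,\rho_i=\rho_i'\alpha}\lambda_i e_v\epsilon\rho_i'$ (modulo paths $\sigma$ with $\sigma\alpha=0$) to lie in $\ker g$, i.e. $\sum_{i:\,\rho_i=\rho_i'\alpha}\lambda_i\varphi_{\epsilon\rho_i'}(m_v)=0$. But this is perfectly compatible with $\varphi_{\epsilon\rho_1}(m_v)\neq 0$ whenever two of the $\rho_i$ end in the same arrow, so no contradiction with that non-vanishing can be extracted. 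Concretely, take $Q$ with arrows $\beta_1,\beta_2:a\to b'$ and $\alpha:b'\to b$, $A=kQ$ (monomial, no relations at all, hence no case-(ii) relations), and $M$ one-dimensional at each vertex with all structure maps the identity: then $\rho=\beta_1\alpha-\beta_2\alpha\in\ann\,M$, $\rho_1=\beta_1\alpha\notin\ann\,M$, $\varphi_{\rho_1}(m_a)\neq 0$, and yet $\xi=(\beta_1-\beta_2)\alpha\in\rad(\ker g)$ because $\beta_1-\beta_2\in\ker g$. So ``monomiality plus $\varphi_{\rho_1}\varphi_\epsilon(m_v)\neq 0$'' cannot establish $\xi\notin\rad(\ker g)$; one must use the $\tau$-tilting hypothesis on $T$ itself (in the example $M$ is not a summand of a $\tau$-tilting module annihilated by $\rho$, since $\tau$-tilting modules over hereditary algebras are faithful). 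The paper's proof at this point argues differently: it reads the equation $\gamma\rho=\sum_{\beta:y\to c}\omega_y\beta$ as producing a non-monomial relation $\gamma\rho-\sum\omega_y\beta$ in $I$ with $\gamma\rho\neq 0$, and contradicts the fact that $I$ is generated by paths. Note, moreover, that your own coefficient matching shows that the one scenario left open is exactly the case where this lift is the zero element of $kQ$ (the configuration of the example above), which monomiality alone does not exclude; any repaired write-up must replace your announced contradiction by the monomial-ideal argument and then dispose of that exact-cancellation case by a separate argument using $\tau$-rigidity again, rather than by appeal to $\varphi_{\rho_1}\varphi_\epsilon(m_v)\neq 0$.
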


\begin{proof}
Let $T=\bigoplus\limits_{i=1}^nT_i$ be a $\tau$-tilting $A$-module, with each $T_i$ an indecomposable $A$-module, for $i=1,\dots, n$. Consider $\rho=\sum\limits_{i=1}^m\lambda_i\rho_i$ in $Q$,  with $\rho_i$ non-zero paths from $a$ to $c$ of length greater than or equal to two and $\lambda_i\in k$ for $i=1,\dots,m$, such that $\rho\in \ann\,T$.  Suppose that there exists $k\in\{1,\ldots,m\}$ such that ${\rho_k\notin \ann\, T}$. Then, there exists an indecomposable direct summand $T_h=((T_h)_y,\varphi_\alpha)_{(y\in Q_0,\,\alpha\in Q_1)}$  of $T$ such that $\rho_k\notin \ann\, T_h$ and, therefore $\varphi_{\rho_k}\neq 0$.

Let $S(x)$ be a direct summand of $\mbox{top}\, T_h$ such that there exists a non-zero path $\gamma$ from $x$ to $a$ and $\varphi_{\rho_k} \varphi_{\gamma}\neq 0$. Let $\{m_{x_1},\ldots, m_{x_{d_x}}\}$ be a basis for $(T_h)_x$ as a  vector space. As in the proof of Proposition \ref{anulador}, we can consider a minimal projective presentation of $T_h$ as follows:
$$P_1\stackrel{g}\rightarrow P_0\stackrel{f}\rightarrow T_h\rightarrow0$$
such that $f(e_x)=m_{x_1}$ and $f(\epsilon)=\varphi_{\epsilon}(m_{x_1})$ if $\epsilon$ is a path in $Q$ starting in $x$. Since $A$ is a monomial algebra and $\gamma\rho_k\neq 0$ then $\gamma \sum\limits_{i=1}^m\lambda_i\rho_i\neq 0$. Therefore, $\gamma \sum\limits_{i=1}^m\lambda_i\rho_i \in \mbox{ker}f$, because $f(\gamma \rho)=\varphi_{\rho }\varphi_{\gamma}=0\varphi_{\gamma}=0$. Next, we show that $\gamma \rho \notin \mbox{rad}(\mbox{ker}f)$.

We recall that the morphisms in the representation of  the $\mbox{ker}f$ are the restriction of the morphism in the representation of $P_0$, and therefore they consist in the multiplication by the correspondent arrows. Then, if $\gamma \rho \in \mbox{rad}(\mbox{ker}f)$,  there exists $\omega_y\in (\mbox{ker}\,f)_c$ such that
\begin{equation}\label{eqnrelacion}
\gamma \rho=\sum\limits_{\beta:y\rightarrow c}\omega_y\beta.
\end{equation}

The equation (\ref{eqnrelacion}) define a relation $\gamma \rho-\sum\limits_{\beta:y\rightarrow c}\omega_y\beta$ in $I$ with  $\gamma\rho\neq0$ which is not monomial, getting a contradiction because  $A$ is a monomial algebra. Therefore, $\gamma \rho \notin \mbox{rad}\,(\mbox{ker}\,f)$. Hence, $S(c)$ is a direct summand of $\mbox{top}\,T_h$ and $P(c)$ is a direct summand of $P_1$.

As  in the proof of Proposition \ref{anulador}, if $P(c)$ is a direct summand of  $P_1$ we can construct a morphism $\theta:P_1\rightarrow T_h$ which does not factorize through $g$,
 a contradiction to Proposition \ref{resproytau}. Hence, if $\sum\limits_{i=1}^m\lambda_i\rho_i\in \ann\, T$, then $\rho_i\in \ann\, T$ for all $i=1,\dots,m$.
\end{proof}

\begin{obs}\label{monomial2}
Observe that if  $A=kQ/I$ is a monomial algebra of global dimension two, with $I=<\rho>$ there is no path of the form $\gamma_1\gamma_2\gamma_3$, with $\gamma_1,\gamma_2,\gamma_3$  non-trivial paths, such that $\gamma_1\gamma_2$ and $\gamma_2\gamma_3$ are the only zero-relations in $A$ contained in the path. Indeed, assume that there exists such a path in $A$. Denote $x=s(\gamma_1)$ and $(\widetilde{Q},\widetilde{\rho})$ the universal cover of $(Q,\rho)$ with $\Phi: \widetilde{Q}\rightarrow Q$. Then there exists in $\widetilde{Q}$ two relations $r_1^*, r_2^*$ associated to $x^*$, with $\Phi(x^*)=x$, such that $\Phi(r_1^*)=\gamma_1\gamma_2$, $\Phi(r_2^*)=\gamma_2\gamma_3$ and $s(r_1^*)<s(r_2^*)<e(r_1^*)$. Then by Theorem \ref{arbolresproy}, we have that $\mbox{pd}_{\tiny{\mbox{rep}(\widetilde{Q},\widetilde{\rho})}}S(x^*)\geq 3$ . Thus, $\mbox{pd}_{A}S(x)\geq 3$ which is a contradiction.
\end{obs}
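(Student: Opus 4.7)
The plan is to proceed by contradiction, using the machinery of the universal cover and Theorem \ref{arbolresproy} to produce a simple module of projective dimension at least three, which would contradict the hypothesis $\gd A = 2$. So I assume that a path $\gamma_1\gamma_2\gamma_3$ in $Q$ exists satisfying the stated hypotheses, and I set $x = s(\gamma_1)$.

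The first step is to pass to the universal cover $(\widetilde{Q},\widetilde{\rho})$ with covering map $\Phi:\widetilde{Q}\to Q$, and lift the path $\gamma_1\gamma_2\gamma_3$ to a path $\widetilde{\gamma}_1\widetilde{\gamma}_2\widetilde{\gamma}_3$ starting at a chosen vertex $x^*$ above $x$. Because $\Phi$ maps $\widetilde{\rho}$ bijectively onto $\rho$ on each fibre, the two monomial zero-relations $\gamma_1\gamma_2$ and $\gamma_2\gamma_3$ lift to monomial relations $r_1^*=\widetilde{\gamma}_1\widetilde{\gamma}_2$ and $r_2^*=\widetilde{\gamma}_2\widetilde{\gamma}_3$ in $\widetilde{\rho}$, both starting from the chosen branch of the lift.

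Next, I would show that $(r_1^*,r_2^*)$ is an initial segment of the associated sequence of relations at $x^*$ in the sense of the definition preceding Theorem \ref{arbolresproy}. By construction $s(r_1^*)=x^*$, and since the hypothesis asserts that $\gamma_1\gamma_2$ and $\gamma_2\gamma_3$ are the \emph{only} zero-relations contained in $\gamma_1\gamma_2\gamma_3$, no further lifted relation in $\widetilde{\rho}$ can sit inside the lifted path. Hence $r_1^*$ is the first relation starting at $x^*$, and because $s(r_2^*)=s(\widetilde{\gamma}_2)$ satisfies $s(r_1^*)<s(r_2^*)<e(r_1^*)=e(\widetilde{\gamma}_2)$, the minimality requirement in the recursive definition of $\mathcal{S}$ forces $r_2^*$ to be the second element of the associated sequence of $x^*$. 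Theorem \ref{arbolresproy} then yields a minimal projective presentation of $S(x^*)$ of length at least three, so $\pd_{\tiny{\mbox{rep}(\widetilde{Q},\widetilde{\rho})}}S(x^*)\geq 3$.

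Finally, I would invoke the standard fact (used in \cite[Theorem 2.3]{GHZ}) that projective resolutions of simple modules descend through the covering: $\Phi$ induces a push-down which sends the minimal projective resolution of $S(x^*)$ to the minimal projective resolution of $S(x)$ in $\mo\,A$ (with the same length). This forces $\pd_A S(x)\geq 3$, contradicting $\gd A=2$. The main subtlety is really the verification that the two relations $r_1^*,r_2^*$ appear consecutively in the associated sequence at $x^*$: one must rule out the possibility that an additional relation in $\widetilde{\rho}$ has its source strictly between $s(r_1^*)$ and $s(r_2^*)$, and this is exactly what the hypothesis ``$\gamma_1\gamma_2$ and $\gamma_2\gamma_3$ are the only zero-relations contained in the path'' guarantees once transported to the cover.
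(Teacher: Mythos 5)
Your proposal is correct and takes essentially the same route as the paper: lift the two overlapping zero-relations to the universal cover, recognize $r_1^*, r_2^*$ as the associated sequence of relations at $x^*$ with $s(r_1^*)<s(r_2^*)<e(r_1^*)$, apply Theorem \ref{arbolresproy} to obtain $\pd S(x^*)\geq 3$, and push down to contradict $\gd\,A=2$. Your added verifications (that $r_1^*,r_2^*$ are consecutive in the associated sequence, and that the resolution descends via the push-down of \cite[Theorem 2.3]{GHZ}) simply make explicit details the paper leaves implicit.
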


Now we are in position to prove Theorem C.

\begin{teo}
Let $A=kQ/I$ be a monomial algebra of global dimension two, $T$ be a $\tau$-tilting $A$-module and $B=\emph{End}_A\,T$. Then $\emph{gldim}\, B < \infty$.
\end{teo}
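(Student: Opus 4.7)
The strategy is to reduce the theorem to showing $\gd(A/\ann\,T)<\infty$, which, by the remark following Theorem \ref{cotagd1}, is equivalent to $\gd B<\infty$. Lemma \ref{anuladormonomial} ensures that $\ann\,T$ is generated by paths, so Lemma \ref{presentacion} yields a presentation $A/\ann\,T=kQ'/I'$ in which $Q'$ is $Q_A$ with the arrows lying in $\ann\,T$ removed and $I'=\langle I_A\cap kQ',\,\ann\,T\cap kQ'\rangle$. Since both $I_A$ and $\ann\,T$ are path-generated, so is $I'$, and $A/\ann\,T$ is again monomial; in particular, Proposition \ref{ciclosmonomial} and Remark \ref{obsciclo} apply to it.

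Arguing by contradiction, suppose $\gd(A/\ann\,T)=\infty$. Then by Proposition \ref{ciclosmonomial} some simple $A/\ann\,T$-module has infinite projective dimension, and Remark \ref{obsciclo} provides non-zero paths $\gamma_1,\dots,\gamma_n$ in $Q'$ together with a subpath $\widetilde{\gamma}_1$ of $\gamma_1$ such that $r_j:=\gamma_j\gamma_{j+1}$ (for $1\leq j\leq n-1$) and $r_n:=\gamma_n\widetilde{\gamma}_1$ are minimal generators of $I'$, and in the universal cover of $(Q',I')$ their consecutive lifts satisfy $s(r_j^*)<s(r_{j+1}^*)<e(r_j^*)$. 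Each minimal generator $r_j$ is therefore either (a) a minimal zero-relation of $A$ already, or (b) a non-zero path in $A$ lying minimally in $\ann\,T$. In case (b), Theorem \ref{anulador} applies; since $A$ is monomial its case (ii) cannot occur (a monomial ideal admits no minimal relation with two or more terms), so case (i) supplies a non-zero path $\delta_j$ in $A$ with $\delta_j r_j\in I_A$. Pick a minimal zero-relation $R_j$ of $A$ inside $\delta_j r_j$; since $\delta_j$ is non-zero in $A$ and no proper subpath of $r_j$ lies in $\ann\,T\supseteq I_A$, the subpath $R_j$ must meet both $\delta_j$ and $r_j$ non-trivially. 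In case (a) simply set $R_j:=r_j$.

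The contradiction then arises by promoting the overlap $s(r_j^*)<s(r_{j+1}^*)<e(r_j^*)$ in $A/\ann\,T$ to an overlap of genuine minimal zero-relations of $A$, which, by Theorem \ref{arbolresproy}, is incompatible with $\gd A\leq 2$. If two consecutive relations $r_j,r_{j+1}$ are both of type (a), lifting the path $\gamma_j\gamma_{j+1}\gamma_{j+2}$ to the universal cover $\widetilde{Q}_A$ already displays the forbidden pair. The main obstacle is the case where (b)-relations intervene, because the leftward extension by $\delta_j$ can shift $s(R_j)$ so far back that the overlap with $R_{j+1}$ is lost. The plan for this step is to exploit the periodicity in Remark \ref{obsciclo} together with the finite Loewy length of $A$: iterating the cyclic word $\gamma_1\gamma_2\cdots\gamma_n$ enough times forces a long power to vanish in $A$, and inside such an iterate one can locate minimal zero-relations of $A$ that cover at least two consecutive $r_j^*$; at the junctions one then reads off the required associated sequence of length $\geq 2$ in $\widetilde{Q}_A$, contradicting $\gd A=2$.
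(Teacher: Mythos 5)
Your reduction to $\gd\,(A/\ann\,T)$, the monomiality of the quotient via Lemma \ref{anuladormonomial} and Lemma \ref{presentacion}, the cyclic sequence of relations from Proposition \ref{propciclo} and Remark \ref{obsciclo}, and the type (a)/(b) dichotomy with Theorem \ref{anulador} (case (ii) being impossible for monomial algebras) all match the paper's proof, and your treatment of two adjacent type-(a) relations is exactly the paper's Remark \ref{monomial2}. But the decisive case, where type-(b) relations intervene, is precisely where your argument stops being a proof: it is announced as a ``plan'', and its central claim --- that inside a vanishing power of the cyclic word one can locate minimal zero-relations of $A$ covering at least two consecutive $r_j^*$ --- is unjustified and too strong. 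A minimal zero-relation of $A$ contained in the winding word can be short (of length two, say), covering no single $r_j^*$ in full; all one can deduce is that it is not contained in any single type-(b) generator, hence straddles one junction. Moreover, even granted such a relation, a single zero-relation does not produce an associated sequence of length $\geq 2$: by Theorem \ref{arbolresproy} you need a \emph{second} minimal zero-relation of $A$ starting strictly inside the first, along a common path, and your sketch never exhibits it. Pairing two left-extended relations $\delta_j r_j$ and $\delta_{j+1}r_{j+1}$ fails, as you yourself observe, since $\delta_{j+1}$ need not lie along the winding path at all.

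The paper closes this gap with two moves that are missing from your proposal. First, it proves that \emph{no} $r_j$ in the cyclic subsequence is a relation of $A$: if some $r_j$ were a zero-relation of $A$, pair it with its \emph{left} neighbour --- either $r_{j-1}$ is also an $A$-relation (then Remark \ref{monomial2} applies directly), or Theorem \ref{anulador} gives a zero-relation $\gamma r_{j-1}$ of $A$; in both cases the two relations overlap along the shared path $\gamma_j$, because the extension sits on the left member only, so its right end $e(r_{j-1})$ is unchanged and $s(r_j)$ still falls strictly inside the window, yielding $\pd_A S\geq 3$, a contradiction with $\gd\,A=2$. Second, knowing now that every $r_j$ is of type (b), the paper produces the needed second $A$-relation \emph{globally} rather than junction by junction: since the quiver carries a cycle and $A$ is finite dimensional, some power of the cycle vanishes, giving a minimal zero-relation $\lambda\in I$ along the cycle whose source lies inside the support of some $r_l$, say $r_l=r''_1\nu$ and $\lambda=\nu\lambda'$; pairing $\lambda$ against the extension $\gamma' r_l$ (again only the left member is extended) gives overlapping zero-relations of $A$ and the same contradiction. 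Your Loewy-length observation is the right germ for this second move, but without the preliminary claim that all $r_j$ are type (b), and without the explicit pairing of $\lambda$ against $\gamma' r_l$, the argument does not close, so as written there is a genuine gap at the heart of the proof.
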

\begin{proof}
Assume that $\gd\, B=\infty$. Since $T$ is a tilting $A/\ann\, T$-module, then $\gd\, A/\ann\, T=\infty$.

By Proposition \ref{anulador} and by Lemma \ref{anuladormonomial} we know that $\ann\, T$ is generated by paths. Thus,  $A/\ann\, T$ is a monomial algebra. Since $\gd\, A/\ann\, T = \infty$, there exists a vertex ${v_0\in Q_0}$ such that $\pd_{A/\ann\, T}\,S(v_0)=\infty$. By Proposition \ref{propciclo} and Remark \ref{obsciclo} there exists a sequence of relations associated to $v_0$, $r_1,\dots,r_k$ which ends in a cycle $\rho=\alpha_1\dots\alpha_n$, with $\alpha_i\in (Q_{A/\tiny{\ann}\,T})_1$ for $i=1,\dots,n$ and ${s(\alpha_1)=e(\alpha_n)}$. We denote by ${r'_{1},r'_{2},\dots,r'_{h}}$ the subsequence of relations that go through the cycle, with  $r'_j=\gamma_j\gamma_{j+1}$, $\gamma_j$ non zero paths for $j=1,\dots, h$ and $r_h=\gamma_h\widetilde{\gamma}_1$, with $\widetilde{\gamma}_{1}$ a subpath of $\gamma_1$.

Given $r'_{j}$ in this subsequence, then either $r'_{j}$ is a zero-relation of  $A$ or is a relation of $A/\ann\,T$ which is not a relation in  $A$. We claim that none relation $r'_{j}$  belong to $A$. In fact, assume that there exists $j$ such that $r'_{j}$ is a zero-relation in $A$. Then if $r'_{j-1}$ is a zero-relation in $A$, by Remark \ref{monomial2}  we get that $\mbox{pd}_A\,S(x_{j-1})\geq3$, where ${x_{j-1}=s(r_{j-1})}$, which is a contradiction. Thus, $r_{j-1}$ is not a relation in $A$. Then, by Proposition \ref{anulador} there exists $\gamma$ a non-zero path in $A$ such that $\gamma r'_{j-1}$ is a monomial relation in $A$. Thus, $\mbox{pd}_AS(x)\geq3$, with $x=s(\gamma)$, which is a contradiction. Hence, none $r'_{j}$ belong to $A$.

Since $A$ is a finite dimensional algebra and there is a cycle in $Q$, it must exist a monomial relation  $\lambda \in I$ which starts and ends in the vertices of the cycle. Since the sequence of relations $r'_{1},\dots,r'_{h}$ go through the cycle $\rho$, there exist $l\in\{ 1,\dots, h\}$ and $\nu$ non-zero paths such that $r'_l=r''_1\nu$ and $\lambda=\nu\lambda'$. Let $\gamma'$ be a non-zero path in $A$ such that $\gamma' r_l$ is a zero-relation in $A$. Then,  $\mbox{pd}_AS(y)\geq3$ with $y=s(\gamma')$ which is a contradiction.  Therefore, $\gd\, A/\ann\, T<\infty$ and $\gd\, B<\infty$.
\end{proof}

In the above theorem we show that if  $A$  is a monomial algebra with global dimension  two, then $\gd\,B<\infty$, where $B=\End_A\,T$ and $T$ is a $\tau$-tilting $A$-module. A natural question is if there exists an explicit bound for $\gd\,B$.

In the next example, we show that the above question is not true. More precisely, if we fix a bound $l$ then we can construct an algebra $A$ and a $\tau$-tilting module $T$ such that $\gd\,\mbox{End}_AT=n$, with $n>l$.

\begin{ej}\label{ejemplomonomialsincota}
Consider the algebra $A_n=kQ_n/I_n$, where, for each $n$, $Q_n$ is the following quiver:
$$
\xymatrix @1 @R=0.3cm  @C=0.5cm{
a_1 \ar[dd]_{\gamma_1}&&&& a_3 \ar[dd]_{\gamma_3}&&&&a_{n-2}\ar[dd]_{\gamma_{n-2}}&&&&&&\\
\ar@{.}@/^/[drrr]&&&&\ar@{.}@/^/[drrr]&&&& \ar@{.}@/^/[drrr]  &&&&&&\\
1\ar[rr]^{\alpha_1}&&2\ar[rr]^{\alpha_2}&&3\ar[rr]^{\alpha_3}&&4&\dots&n-2\ar[rr]^{\alpha_{n-2}}&&n-1\ar[rr]^{\alpha_{n-1}}&&n\ar[rr]^{\alpha_n}&&n+1\\
&&\ar@{.}@/_/[urrr]&&&&&&&&\ar@{.}@/_/[urrr]&&&&\\
&&a_2\ar[uu]^{\gamma_2}&&&&&&&&a_{n-1}\ar[uu]^{\gamma_{n-1}}&&&&
}$$
and  $I_n=<\gamma_i\alpha_i\alpha_{i+1}\,\mid\, i=1,\dots,n-1>$. Since $A$ is a triangular monomial algebra, by \cite{G}, we know that $\gd\,A=2$. Consider the following module
\[T=\bigoplus \limits_{j=1}^{n-1}T_{a_j}\oplus S(a_j) \oplus P(n)\oplus P(n+1)\]
where $S(a_j)$ is the simple at the vertex $a_j$, $P(n)$ and $P(n+1)$ are the projective modules corresponding to the vertices $n$ and $n+1$, respectively, and ${T_{a_j}=((T_{a_j})_x, \varphi^{a_j}_{\delta})_{\{x\in Q_0,\, \delta\in Q_1\}}}$, where for each $x\in (Q_n)_0$, the vector space $(T_{a_j})_x$ is:
$$
(T_{a_j})_x=\left\{
  \begin{array}{ll}
    k & \hbox{if}\,\, x\in\{a_j,a_{j+1},j,j+1\}. \\
    0 & \hbox{else}.
  \end{array}
\right.
$$
and, for each $\delta \in (Q_n)_1$, the morphism $\varphi^{a_j}_{\delta}$ is:
$$
 \varphi^{a_j}_{\delta}=\left\{
  \begin{array}{ll}
    \mbox{Id} & \hbox{if}\,\, \delta\in\{\gamma_j,\gamma_{j+1},\alpha_j\}. \\
    0 & \hbox{else}.
  \end{array}
\right.
$$
Let $P(j)\rightarrow P(a_j)\rightarrow S(a_j)\rightarrow 0$ be the minimal projective presentation of $S(a_j)$. Then we get the following exact sequence:
\[0\rightarrow \tau_AS(a_j)\rightarrow I(j) \rightarrow I(a_j)\cong S(a_j).\]
Thus, we have that $\tau_AS(a_j)=((\tau_AS(a_j))_x,\phi_{\delta}^{a_j})$ is given by
$$
(\tau_AS(a_j))_x=\left\{
  \begin{array}{ll}
    k & \hbox{if}\,\, x\in\{a_{j-1}1,\dots,j\} \\
    0 & \hbox{else}
  \end{array}
\right.$$
for each $x\in (Q_n)_0$ and the morphism is
$$ \phi^{a_j}_{\delta}=\left\{
  \begin{array}{ll}
    \mbox{Id} & \hbox{if}\,\, \delta\in\{\gamma_{j-1},\alpha_1,\dots,\alpha_{j-1}\} \\
    0 & \hbox{else}
  \end{array}
\right.
$$
for each $\delta\in (Q_n)_1$.

Since $\mbox{top}\,T_{a_j}=S(a_{j+1})\oplus S(a_j)$, then the minimal projective presentation of  $T_{a_j}$ ${P(j+1)\rightarrow P(a_{j+1})\oplus P(a_j)\rightarrow T(a_j)\rightarrow 0}$ induces an exact sequence
\[0\rightarrow \tau_AT_{a_j}\rightarrow I(j+1)\rightarrow S(a_{j+1})\oplus S(a_j)\]
Hence, we get that $\tau_AT_{a_j}=((\tau_AT_{a_j})_x,\psi_{\delta}^{a_j})$ is given by
$$
(\tau_AS(a_j))_x=\left\{
  \begin{array}{ll}
    k & \hbox{if}\,\, x\in\{1,\dots,j\}. \\
    0 & \hbox{else}.
  \end{array}
\right.$$
for each $x\in (Q_n)_0$ and the morphism is
$$\psi^{a_j}_{\delta}=\left\{
  \begin{array}{ll}
    \mbox{Id} & \hbox{if}\,\, \delta\in\{\alpha_1,\dots,\alpha_{j-1}\}. \\
    0 & \hbox{else}.
  \end{array}
\right.
$$
for each  $\delta\in (Q_n)_1$.

It follows that $\Hom _A(T,\tau_AT)=0$ . Thus, since  $|T|=n-1+n-1+2=n$ we obtain that $T$ is a $\tau$-tilting module. Finally we compute $\End_A(T)$.

Observe that $\mbox{dim}_k\Hom_A(T_{a_j},T_{a_{j-1}})=1$ and $\mbox{dim}_k\Hom_A(T_{a_j},T_{a_{j-2}})=0$. On the other hand, there is only one linear independent morphism  from $T(a_j)$ to $S(a_j)$, let say $f_j$, and there is only one linear independent morphism  from $T(a_j)$ to $S(a_{j-1})$ which factors through $f_j$. Furthermore, $\mbox{dim}_k\Hom_A(P_n,T_{n-1})=1$, $\mbox{dim}_k\Hom_A(P_{n+1},P_{n})=1$ and $\mbox{dim}_k\Hom_A(P_{n+1},T_{n-1})=1$. Then, $B=\End_A(T)$ is given by the following quiver $Q_n'$
$$
\xymatrix @1 @R=0.3cm  @C=0.5cm{
1\ar[rr]_{\beta_1}&\ar@{.}@/^/[rr]&2\ar[rr]_{\beta_2}&\ar@{.}@/^/[rr]&3\ar[rr]_{\beta_3}&&4&\dots&\bullet\ar[rr]_{\beta_{n-2}}&\ar@{.}@/^/[rr]&n-1\ar[rr]_{\beta_{n-1}}&\ar@{.}@/^/[rr]&n\ar[rr]_{\beta_n}&&n+1\\
&&&&&&&&&&&&&&\\
b_1 \ar[uu]&&b_2 \ar[uu]&&b_3 \ar[uu]&&b_4 \ar[uu]&&b_{n-2} \ar[uu]&&b_{n-1} \ar[uu]&&&&
}$$
and $I'_n=<\beta_i\beta_{i+1}, \,\, i=1,\dots, n-1>$. Hence, by \cite{G} we get that  $\gd\,B=n$.
\end{ej}
\subsection{Special biserial algebras of global dimension two}
In this section, we start proving that the annihilator of a $\tau$-tilting $A$-module over a special biserial algebras is generated by paths.

\begin{lema}\label{SBanulador}
Let $A=kQ/I$ be a special biserial algebra and $T$ be a $\tau$-tilting $A$-module. Assume $0\neq p+q\in\emph{ann}\,T$, with $p,q$ non-zero paths of length  greater than or equal to two. Then $p\in\emph{ann}\,T$ and $q\in \emph{ann}\,T$.
 \end{lema}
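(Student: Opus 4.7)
The plan is to adapt the projective-cover argument of Lemma \ref{anuladormonomial} and Proposition \ref{anulador} to exploit the particular shape of relations in a special biserial algebra. First one reduces to the case where $p$ and $q$ are linearly independent in $A$: if some linear combination $\lambda p+\mu q$ lies in $I$ with $\lambda,\mu$ not both zero, then both scalars must be nonzero (otherwise $p$ or $q$ would lie in $I$, contradicting that they are nonzero in $A$), and the hypothesis $p+q\neq 0$ in $A$ forces $\mu\neq \lambda$; consequently $p+q=(1-\mu/\lambda)q$ is a nonzero scalar multiple of $q$ in $A$, so $p+q\in\ann T$ immediately gives $q\in\ann T$ and hence $p\in\ann T$. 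So from now on I assume $(p,q)$ is not a binomial relation, i.e., $p$ and $q$ are linearly independent in $A$.

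Write $p=\alpha_1\cdots\alpha_n$ and $q=\beta_1\cdots\beta_m$ as paths from a vertex $a$ to a vertex $c$. The unique-predecessor property of special biserial algebras, together with the linear independence of $p$ and $q$, forces $\alpha_n\neq\beta_m$; otherwise an inductive backwards comparison would contradict linear independence. Assume for contradiction that $p\notin\ann T$, and choose an indecomposable summand $T_l=((T_l)_i,\varphi_\lambda)$ of $T$ with $\varphi_p\neq 0$; then $\varphi_q=-\varphi_p\neq 0$ on $T_l$, because $p+q\in\ann T$. Following the construction in the proof of Proposition \ref{anulador}, build the minimal projective presentation $P_1\stackrel{f}{\to}\widetilde{P_0}\stackrel{\widetilde g}{\to}T_l\to 0$ and pick a simple summand $S(x)$ of $\tp T_l$, a nonzero path $\gamma\colon x\to a$, and a basis element $m_{x1}$ of $(T_l)_x$ such that $\varphi_{\gamma p}(m_{x1})\neq 0$. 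Then $\widetilde g(\gamma p+\gamma q)=(\varphi_p+\varphi_q)\varphi_\gamma(m_{x1})=0$, so $\gamma p+\gamma q$ is a nonzero element of $(\ker\widetilde g)_c$. The key claim is that $\gamma p+\gamma q\notin\rad(\ker\widetilde g)$: since at most the two arrows $\alpha_n$ and $\beta_m$ end at $c$, any decomposition $\gamma p+\gamma q=w_{\alpha_n}\alpha_n+w_{\beta_m}\beta_m$ with $w_\lambda\in(\ker\widetilde g)_{s(\lambda)}$ differs from the tautological one $w_{\alpha_n}=\gamma\alpha_1\cdots\alpha_{n-1}$, $w_{\beta_m}=\gamma\beta_1\cdots\beta_{m-1}$ only by elements arising from a binomial relation in $A$ that ends at $c$; the assumption that $(p,q)$ is not a binomial relation together with $\widetilde g(\gamma\alpha_1\cdots\alpha_{n-1})\neq 0$ (applying $\varphi_{\alpha_n}$ recovers $\widetilde g(\gamma p)\neq 0$) rules out any such adjustment landing in $\ker\widetilde g$.

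Consequently $S(c)$ is a direct summand of $\tp\Omega T_l$, so $P(c)$ is a direct summand of $P_1$ and $f(e_c)=\gamma p+\gamma q$. Setting $t=\varphi_{\gamma p}(m_{x1})\in(T_l)_c\setminus\{0\}$, define $\theta\colon P_1=P(c)\oplus P_1'\to T_l$ by $\theta(e_c)=t$ and zero on $P_1'$. Any $\psi\colon\widetilde{P_0}\to T_l$ satisfies $\psi(\gamma p+\gamma q)=\psi(e_x)\cdot\gamma(p+q)=0$ because $p+q\in\ann T$ annihilates $T_l$, hence $\theta$ does not factor through $f$; by Lemma \ref{resproytau} this contradicts $\Hom_A(T_l,\tau T_l)=0$, which holds by the $\tau$-rigidity of $T$. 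The main obstacle in this plan is the verification that $\gamma p+\gamma q\notin\rad(\ker\widetilde g)$: one must trace exactly which binomial identifications can occur in $(\widetilde{P_0})_c$ at the vertex $c$ and argue that they cannot simultaneously move both $w_{\alpha_n}$ and $w_{\beta_m}$ into $\ker\widetilde g$, and this is the step where the special biserial hypothesis and the assumption that $(p,q)$ is not a binomial relation are both essential.
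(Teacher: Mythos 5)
Your overall scaffolding is the paper's: locate a kernel element of the minimal projective presentation of a summand $T_l$ with $\varphi_p\neq 0$, show it is not in the radical of the kernel so that $P(c)$ splits off $P_1$, then build $\theta$ with $\theta(e_c)=t\neq 0$ that cannot factor through the presentation map and contradict $\Hom_A(T_l,\tau T_l)=0$ via Lemma \ref{resproytau}; your closing computation $\psi(\gamma p+\gamma q)=\psi(e_x)\cdot\gamma(p+q)=0$ is correct and even cleaner than the paper's diagram chase. However, there are two genuine gaps. First, your reduction to $\alpha_n\neq\beta_m$ is a non sequitur: if $p$ and $q$ share their last arrow, the unique-predecessor property of a special biserial algebra only forces the shorter path to be a suffix of the longer, i.e.\ $q=wp$ with $w$ a possibly non-trivial oriented cycle at $a$; when $w$ is non-trivial, $p$ and $q$ can perfectly well be linearly independent in $A$, so no contradiction with independence arises. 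That case must be disposed of separately (it is easy: $p+q=(e_a+w)p$ and $e_a+w$ is invertible since $w\in\rad A$ is nilpotent, so $p\in\ann T$ directly, and likewise for a shared first arrow, $q=pw$ --- a degeneracy your argument never addresses but which your later steps tacitly exclude).

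Second, and more seriously, the pivotal claim $\gamma p+\gamma q\notin\rad(\ker\widetilde{g})$ is not proved: you flag it yourself as ``the main obstacle,'' and the sketch about binomial identifications is not a verification. This is precisely where the paper inserts a step you skip: it first shows $\gamma$ must be trivial. If $\gamma$ is non-trivial with last arrow $\rho$ and $p,q$ begin with distinct arrows, then $\rho$ composes non-trivially with at most one of them, so $\gamma p\neq 0$ forces $\gamma q=0$ in $A$, whence $\varphi_q\varphi_\gamma=0$, contradicting $\varphi_q\varphi_\gamma=-\varphi_p\varphi_\gamma\neq 0$. Hence $S(a)$ is a direct summand of $\tp T_l$, the kernel element is $p+q$ itself at a canonical top generator, and $f(p)\neq 0\neq f(q)$. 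Only then is the radical step clean: a decomposition $p+q=\sum_{\beta:y\rightarrow c}\nu_y\beta$ with $\nu_y\in\ker f$ would yield a relation in $A$ with at least three branches (neither $p$ nor $q$ lies in $\ker f$, so the $\nu_y$ must carry extra paths), which is impossible because relations in a special biserial algebra are monomial or binomial. In your generality the element can degenerate --- note that in the configuration you set up, if $\gamma q=0$ the element collapses to the single path $\gamma p$, and showing a single path avoids the radical is a different claim (it is the situation of Theorem \ref{anulador}, where membership in the radical produces exactly the relations of types i) and ii)), so the verification you defer would not go through as stated. With the last-arrow/first-arrow degeneracies handled by the invertibility trick and the ``$\gamma$ is trivial'' step inserted, your plan becomes the paper's proof.
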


\begin{proof}
Let $T=\bigoplus\limits_{i=1}^nT_i$ be a $\tau$-tilting $A$-module, with $T_i$ indecomposable $A$-modules, for $i=1,\dots,n$. Consider $p+q\in \ann\,T$, with $p,q$ non-zero paths in $Q$ from $a$ to $b$.  Assume that $p\notin \ann\, T$. Then, there exists $T_h=((T_h)_y,\varphi_\alpha)_{y\in Q_0, \alpha\in Q_1 }$ a direct summand of $T$ such that $p\notin \ann\, T_h$.  Since $p+q\in\ann\,T_h$ then  $q\notin \ann\,T_h$.

Let $S(x)$ be a direct summand of $\mbox{top}\, T_h$ such that there exists a non-zero path $\gamma$ from $x$ to  $a$ and $\varphi_{p}\varphi_{\gamma} \neq 0$. Then $\varphi_q\varphi_{\gamma}\neq 0$. Since $A$ is a special  biserial algebra if $\gamma p\neq 0 $ then $\gamma q =0$. Thus, $\varphi_q\varphi_{\gamma}= 0$ which is a contradiction. Hence, $S(a)$ is a direct summand of $\mbox{top}\, T_h$.

Let $\{m_{a_1},\ldots, m_{a_{d_a}}\}$ be  a basis for $(T_h)_a$. As in the proof of Proposition \ref{anulador}, we consider $P_1\stackrel{g}\rightarrow P_0\stackrel{f}\rightarrow T_h\rightarrow0$ a minimal projective presentation of $T_h$ such that $f(e_a)=m_{a_1}$ and $f(\epsilon)=\varphi_{\epsilon}(m_{a_1})$ if $\epsilon$ is a path in $Q$ starting in $a$. Then $ p+q \in \mbox{ker}f$, because
$$f(p+q)=\varphi_{p+q}(m_a)=(\varphi_p+\varphi_q)(m_a)=0.$$
Let us prove that $p+q \notin \mbox{rad}(\mbox{ker}f)$. Suppose that $p+q\in \mbox{rad}(\mbox{ker}\,f)$. Since the morphisms in the representation of $\mbox{ker}f$ are the multiplication  by arrows, then there exists $\nu_y\in (\mbox{ker} f)_b$ such that
\begin{equation}\label{eqnrelacion2}
p+q=\sum\limits_{\beta:y\rightarrow b}\nu_y\beta.
\end{equation}

The equation (\ref{eqnrelacion2}) define a relation in  $A$ which has more than three branches and ${p+q\neq0}$, which is a contradiction because $A$ is special biserial. Thus, ${p+q \notin \mbox{rad}(\mbox{ker}\,f)}$. Hence, $S(b)$ is a direct summand of  ${\mbox{top}\,(\mbox{ker}\,f)}$ and $P(b)$ is a direct summand of $P_1$.

As in the proof of the Proposition \ref{anulador}, if $P(b)$ is a direct summand of $P_1$ we can construct a morphism $\theta:P_1\rightarrow T_h$ which does not factor through  $g$ which is a contradiction. Therefore, if $p+q\in \ann\, T$ then $p\in \ann\, T$ and $q\in\ann\,T$.
\end{proof}

\begin{obs}
As a consequence of Lemma \ref{SBanulador}, we get that if $A$ is a special biserial algebra and $T$ is a $\tau$-tilting module (which is not tilting), then $\mbox{ann}\,T$ is generated by paths of length  greater than or equal to one.
\end{obs}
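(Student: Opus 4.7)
The plan is to deduce the observation directly from Lemma \ref{SBanulador}, using the fact that $\ann T$ is a two-sided ideal of $A$ and that $A$ admits a $k$-basis $\mathcal{P}$ consisting of (classes of) non-zero paths in $kQ$. The goal is to show that the subset $\mathcal{P}\cap\ann T$ already generates $\ann T$ as a two-sided ideal; the length-at-least-one condition will then follow from sincerity of $T$.

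First I would reduce componentwise. Since $1=\sum_{a\in Q_0}e_a$ and $\ann T$ is two-sided, any $x\in\ann T$ decomposes as $x=\sum_{a,b\in Q_0}e_b x e_a$ with each summand again in $\ann T$. It therefore suffices to show that, for each pair $(a,b)$, the space $e_b(\ann T)e_a$ is spanned by the individual non-zero paths from $a$ to $b$ that lie in $\ann T$. Fix such a pair and write a nonzero $y\in e_b(\ann T)e_a$ in the basis as $y=\sum_i\lambda_ip_i$ with distinct $p_i\in\mathcal{P}$ and $\lambda_i\in k^{\times}$. The claim is that every $p_i$ belongs individually to $\ann T$. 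If $y$ has a single term this is immediate. If $y=\lambda_1 p_1+\lambda_2 p_2$ is a binomial with both paths of length~$\geq 2$, then after rescaling to $p_1+(\lambda_2/\lambda_1)p_2\in\ann T$ Lemma \ref{SBanulador} applies and places both $p_1$ and $p_2$ in $\ann T$. For longer combinations I would argue iteratively, using that by conditions (a) and (b) of the definition of special biserial the non-zero paths starting at $a$ split into at most two ``threads'' (one for each outgoing arrow, uniquely extended at every step); by subtracting off contributions already known to lie in $\ann T$ and, if necessary, multiplying by suitable paths or idempotents, one isolates binomial subrelations to which Lemma~\ref{SBanulador} applies, and iteration then forces every $p_i$ into $\ann T$.

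Finally, the length-at-least-one condition is immediate: a $\tau$-tilting module is sincere, so $Te_a\neq 0$ for every vertex $a$, whence no idempotent $e_a$ can belong to $\ann T$, and every path appearing as a generator has length $\geq 1$. The hypothesis that $T$ not be a tilting module is used only to guarantee $\ann T\neq 0$, ensuring the statement has content. The main obstacle I foresee is making the multi-term reduction rigorous, since Lemma~\ref{SBanulador} is stated only for binomials of paths of length~$\geq 2$; this will require careful use of the biserial conditions to control pairs of paths sharing (or not sharing) a common first arrow, and the short-path edge cases (length $\leq 1$) must be ruled out separately using sincerity together with the fact that $\ann T$ is a proper ideal.
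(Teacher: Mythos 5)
The paper does not actually write a proof of this remark: it is asserted as an immediate consequence of Lemma \ref{SBanulador}, together with the sincerity of $\tau$-tilting modules. Your outer skeleton (the Peirce decomposition $x=\sum_{a,b}e_bxe_a$, reduction to linear combinations of parallel non-zero paths, sincerity to exclude trivial paths, and the observation that the non-tilting hypothesis only serves to guarantee $\ann\,T\neq 0$) is the natural way to make that assertion precise, and those parts are correct.

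The genuine gap is the multi-term reduction, which is exactly the content your sketch leaves open. Two concrete failure points. First, distinct non-zero parallel paths in a special biserial algebra need not have distinct first arrows: since each arrow admits at most one non-zero continuation, the non-zero paths extending a fixed arrow form a single nested chain, and if this chain returns to the target $b$ more than once one obtains terms $p,\,pw,\,pww',\dots$ from $a$ to $b$ sharing source, target \emph{and} first arrow. For such terms no multiplication by idempotents, no splitting into your two ``threads'', and no subtraction of previously identified members of $\ann\,T$ isolates a binomial of the shape covered by Lemma \ref{SBanulador}. Note, moreover, that the paper's proof of that lemma uses the implication ``$\gamma p\neq 0$ implies $\gamma q=0$'', which is valid only when $p$ and $q$ begin with different arrows, so the nested case is not even covered by the lemma for two terms and requires a genuinely separate device --- for instance right-multiplying by the forced continuation along the chain so that the longer terms vanish (using nilpotency of $\rad\,A$) and then descending by induction, or rerunning the projective-presentation argument of Theorem \ref{anulador} directly for an $m$-term sum. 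Second, the edge case $\lambda_1\alpha+\lambda_2 q\in\ann\,T$ with $\alpha$ a single arrow is excluded by the length hypothesis of Lemma \ref{SBanulador}, and your proposed remedy fails: sincerity only forbids trivial paths (idempotents) from lying in $\ann\,T$; it says nothing about a combination of an arrow and a longer parallel path neither of whose summands lies in $\ann\,T$, so this case too needs the presentation-theoretic argument rather than the lemma. You flag both difficulties yourself as ``the main obstacle'', but flagging is not resolving: as written, the proposal is a plan whose hardest steps remain open.
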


\begin{prop}
Let $A=kQ/I$ be a special biserial algebra such that $\emph{gldim}\,A=2$ and $T$ be a $\tau$-tilting  $A$-module. Then $A/\emph{ann}\,T$ satisfies $(GD1)$ and $(GD3)$, stated in Theorem \ref{SBgldim}.
\end{prop}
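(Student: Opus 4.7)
The argument hinges on a lifting lemma which reduces binomial relations in $A/\ann\, T$ to those in $A$; once this is established, both conditions (GD1) and (GD3) for $A/\ann\, T$ follow from the corresponding conditions for $A$, which hold by Theorem \ref{SBgldim} since $\gd\, A = 2$.

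The lifting lemma I would prove first asserts: if $\lambda p + \mu q \in I_{A/\ann\, T}$ with $p,q$ distinct non-zero paths in $A/\ann\, T$, then in fact $\lambda p + \mu q = 0$ in $A$. The hypothesis means $\lambda p + \mu q \in \ann\, T$ inside $A$; if this sum were non-zero in $A$, an argument parallel to Lemma \ref{SBanulador} (which handles the case of scalars equal to one and extends verbatim to general non-zero scalars since $p$ and $q$ are distinct paths with the same source and target) would force $p,q \in \ann\, T$, contradicting their non-vanishing in $A/\ann\, T$. Hence $(p,q)$ is also a binomial relation in $A$.

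For (GD1), suppose $A/\ann\, T$ admits binomial relations $(p_1,q_1)$ and $(p_2,q_2)$ such that the start point of the first lies on some path of the second. By the lifting lemma, both are binomial relations in $A$, so (GD1) fails for $A$, contradicting $\gd\, A = 2$. For (GD3), let $(\alpha p \beta, \gamma q \delta)$ be a binomial relation in $A/\ann\, T$, hence also in $A$. Take $u$ non-zero in $A/\ann\, T$ with $e(u)=s(\alpha)$; then $u$ is non-zero in $A$ and, by (GD3) for $A$, without loss of generality $u\alpha p \neq 0$ in $A$. If $u\alpha p \notin \ann\, T$ we are done. Otherwise, since $\alpha p \beta \notin \ann\, T$ implies that no subpath of $\alpha p$ lies in $\ann\, T$, a minimal $\ann\, T$-subpath $\rho$ of $u\alpha p$ has the form $u'(\alpha p)'$, with $u'$ a non-trivial suffix of $u$ and $(\alpha p)'$ a non-trivial prefix of $\alpha p$. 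Applying Theorem \ref{anulador} to $\rho$, either (ii) there is a minimal binomial relation $\eta\rho + \lambda'\xi$ in $A$, in which case $s(\alpha)$ is an interior vertex of the path $\eta\rho$ of this new binomial and is simultaneously the start of $(\alpha p\beta, \gamma q\delta)$, violating (GD1) for $A$; or (i) $\eta\rho$ is a zero-relation in $A$, and then the special biserial combinatorics combined with the binomial $(\alpha p\beta, \gamma q\delta)$ force either a configuration forbidden by (GD2) for $A$ or directly $u\gamma q \notin \ann\, T$. In every case the assumption is untenable, and the dual part of (GD3) involving $v$ with $s(v)=e(\beta)$ follows by the symmetric argument.

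\textbf{Main obstacle.} The delicate point is case (i) above: carefully combining the zero-relation $\eta u'(\alpha p)' \in I_A$ coming from the annihilator with the preexisting binomial $(\alpha p\beta, \gamma q\delta)$ to extract a contradiction, using that in a special biserial algebra at most two non-zero paths exist between any two vertices and that three consecutive overlapping zero-relations along a string are forbidden by (GD2). This bookkeeping is what makes (GD3) genuinely harder than (GD1).
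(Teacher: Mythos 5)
Your overall framework matches the paper's: both get $(GD1)$ by observing that, since $\emph{ann}\,T$ is generated by paths (Lemma \ref{SBanulador}), every binomial relation of $A/\emph{ann}\,T$ is already a binomial relation of $A$, and both attack $(GD3)$ by locating a path of $\emph{ann}\,T$ straddling $s(\alpha)$, applying Theorem \ref{anulador}, and excluding its case (ii) via $(GD1)$ for $A$. The problem is your case (i), which you yourself flag as the ``main obstacle'' and never carry out — and this is a genuine gap, not mere bookkeeping. Worse, one of your two proposed escape routes is vacuous: once you normalize $u\alpha p\neq 0$ in $A$ with $u$ non-trivial, the special biserial condition applied to the last arrow $\rho_k$ of $u$ forces $\rho_k\gamma=0$ already in $A$ (note $\alpha\neq\gamma$, since the two branches of a binomial relation must begin with distinct arrows by minimality of the relation), so $u\gamma q$ vanishes in $A/\emph{ann}\,T$ automatically. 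Hence the disjunct ``directly $u\gamma q\notin \emph{ann}\,T$'' can never occur, and in the bad case you must produce an outright contradiction; you have not exhibited the $(GD2)$-forbidden configuration you appeal to, and it is not clear one arises.

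The paper closes this case without $(GD2)$, by a short trick your sketch is missing. Take $u$ minimal with both $u\alpha p=0$ and $u\gamma q=0$ in $A/\emph{ann}\,T$; minimality, together with $u\neq 0$ and $\gamma q\neq 0$ in $A/\emph{ann}\,T$, forces the relation $\lambda$ of $A/\emph{ann}\,T$ contained in $u\gamma q$ to contain all of $u\gamma$, so $\lambda=u\gamma q''$ for some prefix $q''$ of $q$. Theorem \ref{anulador}(i) (case (ii) being excluded by $(GD1)$ for $A$, as in your argument) gives a non-zero path $\lambda_1$ with $\widetilde{\lambda}=\lambda_1\lambda$ a zero-relation in $A$; since a zero-relation is a minimal generator, its proper initial subpath $\widetilde{u}=\lambda_1 u$ is non-zero in $A$, while $\widetilde{u}\alpha p=0$ in $A$ (it contains $\rho_k\alpha$) and $\widetilde{u}\gamma q=0$ in $A$ (it contains $\widetilde{\lambda}$). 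This contradicts $(GD3)$ for $A$ itself. So the key idea is to prolong $u$ by $\lambda_1$, thereby converting the annihilator path into an honest zero-relation of $A$, and then reapply $(GD3)$ for $A$ to the longer path — a step you would need to find for your proposal to be complete.
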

\begin{proof}
By Lemma \ref{SBanulador}, the annihilator of $T$ is generated by paths. Then, $A/\ann \,T$ has at least the same binomial relations that $A$. By Theorem \ref{SBgldim}, we have that $A$ satisfies $(GD1)$ and thus, $A/\ann \,T$ satisfies $(GD1)$.

Let $(\alpha p \beta, \gamma q \delta)$ be a binomial relation in  $A/\ann \,T$. Consider $u$ a non-zero path in $A/\ann \,T$ such that $e(u)=s(\alpha)$, $u\alpha p=0$, $u\gamma q=0 $ and minimal in the sense that if $u'$ is a subpath of $u$ then $u'\alpha p\neq 0$ or $u'\gamma q\neq0 $. We write  $u=\rho_1\dots\rho_k$ with $\rho_i$ an arrow for each $i=1,\dots,k$. Since  $A$ is special biserial then $\rho_k\alpha=0$ or $\rho_k\gamma=0$. Without loss of generality, assume that $\rho_k\alpha=0$. Then there exists a relation $\lambda$  in $A/\ann\, T$ contained in $u\gamma q$. Since $A$ satisfies $(GD3)$ then $\lambda$ is not  a relation in $A$. Thus, since $A$ satisfies $(GD1)$, by Proposition \ref{anulador} there exists a non-zero path $\lambda_1$ of length greater than or equal to one such that $\widetilde{\lambda}=\lambda_1\lambda$ is a zero-relation in  $A$.

Since $u\neq 0$ and $\gamma q \neq 0$ then $\lambda$ contains al least the path $u\gamma$ (it must contain $u$, because if not it is a contradiction to the minimality of $u$). Consider $\widetilde{u}=\lambda_1u$. Since $\widetilde{\lambda}$ is a zero-relation, then $\widetilde{u}\neq 0$. Hence, we have that $\widetilde{u}\alpha p=0$ in $A$ and $\widetilde{u}\gamma q = 0$ in $A$, which is a contradiction because $A$ satisfies $(GD3)$.

Now, if we consider $v$ a non-zero path in $A/\ann \,T$ such that ${s(v)=e(\beta)}$, with similar arguments as above it is not hard to see that if ${p\beta v=0}$ then  ${q\delta v=0}$. Therefore, $A/\ann\,T$ satisfies $(GD3)$.
\end{proof}

\begin{obs}\label{obsquiverproh}
Let $A=kQ/I$ be a special biserial algebra such that ${\mbox{gldim}\,A =2}$. The following quivers could not be subquivers of  $Q$.
\begin{enumerate}[a)]
\item A binomial relation $(\alpha p \gamma , \beta q \delta)$ such that $s(\alpha)=e(\gamma)$.
\[\xymatrix  @R=0.009cm  @C=0.09cm{
\ar@/_/[dd]&\dots&&&&&&\dots&\ar@/^/[dd]\\
&&&&\bullet\ar@/_/[llu]_{\alpha}\ar@/^/[rru]^{\beta} &&&&\\
&\dots&\ar@/_/[rru]_{\gamma}&&&&\ar@/^/[llu]^{\delta}&\dots&
}\]
If there exists such a binomial relation, since $A$ is special biserial, the following conditions are satisfied:
  \[(\delta\alpha=0  \,\,\, \mbox{or}  \,\,\, \delta\beta=0) \,\,\,\mbox{and} \,\,\,
  (\gamma\alpha=0  \,\,\,\mbox{or}  \,\,\, \gamma\beta=0) \,\,\,\mbox{and}\,\,\,
  (\gamma\alpha=0  \,\,\,\mbox{or} \,\,\, \delta\alpha=0) \,\,\,\mbox{and}\,\,\,
  (\gamma\beta=0  \,\,\,\mbox{or} \,\,\, \delta\beta=0).\]
Suppose that $\gamma\alpha=0$ and $\delta\beta=0$. Let $u=\alpha p' \gamma \neq 0$. Then $u\alpha p'=0$ and $u\beta q'=0$ which is a contradiction since $A$ satisfies $(GD3)$. A similar analysis hold if we consider the other cases.
\item A subquiver of the form:
$\xymatrix  @R=0.009cm  @C=0.01cm{
&& a\ar[dd]^{\gamma} &&&&\\
&&\ar@{.}@/^/[rdd] &&&&\\
&& \bullet \ar@{--}[dddddd] \ar[lldd]_{\alpha_1}\ar[rrdd]^{\beta_1} &&&&\\
&&& &&&&\\
\bullet \ar[dd]_{\alpha_2}&&&&\bullet\ar[dd]^{\beta_2}  &  \\
&&&&  &&&\\
\vdots &&&& \vdots &&&\\
 \bullet \ar[drr]_{\alpha_n\;\;\;}&\ar@{.}@/_/[ddr]&&& \bullet \ar[dll]^{\;\;\;\beta_m} &&\\
&& \bullet\ar[dd]^{\rho}  &&&\\
 &&   &&&\\
 &&\bullet   &&&
}$
\newline with $\gamma \beta_1=0$ and $\alpha_n \rho=0$. Indeed, considering $p_1=\gamma\alpha_1\dots\alpha_{n-1}$, $p_2=\alpha_n$ and $p_3=\rho$ we obtain that $p_1p_2$ and $p_2p_3$ are the only zero-relations contained in the path $p_1p_2p_3$, which is a contradiction since $A$ satisfies $(GD2)$.
\item The end-point of a zero-relation $\rho_1 \dots \rho_n$ in $A$ could not be the start-point of two arrows $\alpha,\,\beta$. Indeed, since $A$ is special biserial then $\rho_n\alpha=0$ or $\rho_n\beta=0$. Thus, taking the paths $p_1=\rho_1\dots \rho_{n-1}$, $p_2=\rho_n$ and $p_3=\alpha$ (or $p_3=\beta$), we get a contradiction since $A$ satisfies $(GD2)$.
\end{enumerate}
\end{obs}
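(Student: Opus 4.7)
The plan is to establish each of the three prohibitions (a), (b), (c) by contradiction, deriving in each case a direct violation of one of the three conditions (GD1)--(GD3) that characterize special biserial algebras of global dimension at most $2$ in Theorem \ref{SBgldim}. In short, (a) will contradict (GD3), while (b) and (c) will both reduce to (GD2).

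For part (a), I would assume such a binomial relation $(\alpha p \gamma,\beta q \delta)$ exists with $s(\alpha)=e(\gamma)=e(\delta)$, so that the compositions $\gamma\alpha$, $\gamma\beta$, $\delta\alpha$, $\delta\beta$ all live at the identified vertex. The special biserial axiom applied to each of the arrows $\alpha,\beta,\gamma,\delta$ forces at most one outgoing and at most one incoming non-zero composition; translating this gives four disjunctions, each of the form ``at least one of two of these four products vanishes''. A finite case analysis (for instance $\gamma\alpha=0$ and $\delta\beta=0$) then allows one to build, using one of the two surviving compositions, a non-zero path $u$ ending at $s(\alpha)$ such that both $u\alpha p$ and $u\beta q$ vanish. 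That is exactly the configuration ruled out by (GD3).

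For part (b), the strategy is direct. In the displayed subquiver I would set $p_1=\gamma\alpha_1\cdots\alpha_{n-1}$, $p_2=\alpha_n$ and $p_3=\rho$. The hypotheses $\gamma\beta_1=0$ and $\alpha_n\rho=0$, combined with the biserial axiom applied at the branching vertex where $\alpha_1,\beta_1$ both start and at the vertex where $\alpha_n$ and $\rho$ meet, identify $p_1p_2$ and $p_2p_3$ as the only zero-relations sitting inside $p_1p_2p_3$, while $p_2$ is a single arrow and hence a string. This is precisely the situation (GD2) prohibits. For part (c), if the end-point of the zero-relation $\rho_1\cdots\rho_n$ emits two arrows $\alpha,\beta$, the biserial axiom forces $\rho_n\alpha=0$ or $\rho_n\beta=0$; call the vanishing arrow $\xi$, and then $p_1=\rho_1\cdots\rho_{n-1}$, $p_2=\rho_n$, $p_3=\xi$ again yields two ``only'' zero-relations inside $p_1p_2p_3$, contradicting (GD2).

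The main delicate step is the case analysis in part (a): in each of the four cases one must exhibit the witness path $u$ and verify it is non-zero in $A$, not merely in $kQ$. This amounts to checking, in each subcase, that the initial segment of $u$ follows the non-vanishing composition dictated by biseriality, so that no already-identified zero-relation obstructs it. The verifications for (b) and (c) are essentially bookkeeping with the biserial axiom and the definition of a string, and I do not expect any difficulty there.
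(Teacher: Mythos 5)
Your overall route is the paper's: parts b) and c) are exactly the paper's reductions to $(GD2)$, with the same choices $p_1=\gamma\alpha_1\dots\alpha_{n-1}$, $p_2=\alpha_n$, $p_3=\rho$ in b) and $p_1=\rho_1\dots\rho_{n-1}$, $p_2=\rho_n$, $p_3=\alpha$ (or $\beta$) in c), and part a) follows the paper's scheme of extracting the four disjunctions from the special biserial axiom and contradicting $(GD3)$ by a case analysis.

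There is, however, a concrete gap in a), and you have located the delicacy in the wrong place. Non-vanishing of the witness is free: the natural witness is a maximal subpath of the binomial relation itself, $u=\alpha p\gamma$ (or $u=\beta q\delta$), which ends at $e(\gamma)=e(\delta)=s(\alpha)$ and is non-zero in $A$ by the very definition of a binomial relation, so no bookkeeping against already-identified zero-relations is required. What is not free is the simultaneous vanishing of $u\alpha p$ and $u\beta q$. The four disjunctions permit the mixed configuration $\gamma\alpha=0$, $\delta\beta=0$, $\gamma\beta\neq 0$, $\delta\alpha\neq 0$; there, for $u=\alpha p\gamma$ only $u\alpha p=\alpha p(\gamma\alpha)p=0$ is direct, while $u\beta q=\alpha p(\gamma\beta)q$ is not killed by any of the listed zero products, and the symmetric failure occurs for $u=\beta q\delta$. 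So ``using one of the two surviving compositions'' will not produce both vanishings in these cases. The missing idea, implicit in the paper's assertion that $u\beta q'=0$, is to rewrite $u$ modulo $I$ via the binomial relation itself: in $A$ one has $\alpha p\gamma=c\,\beta q\delta$ for some $c\in k\setminus\{0\}$, whence $u\beta q=c\,\beta q(\delta\beta)q=0$ by the assumption $\delta\beta=0$. With this substitution each case of your analysis closes in the same way, so the gap is local and easily repaired, but as written your a) would stall precisely in the mixed cases.
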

In order to prove the main result of this section, first we give some lemmas.
\begin{lema}\label{lemabiserial1}
Let $A=kQ/I$ be a special biserial algebra such that ${\emph{gldim}\,A=2}$, $T$ be a $\tau$-tilting module and $a\in (Q_{A/\emph{ann}\,T})_0$. Assume that in the vertex $a$ only starts a binomial relation. Then $\emph{pd}_{\scriptscriptstyle A/\tiny{\emph{ann}}\,T}S(a)\leq 2$.
\end{lema}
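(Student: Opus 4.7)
The plan is to build an explicit projective resolution of $S(a)$ in $B := A/\ann\,T$ of length at most two. Let $\rho = \alpha p \beta + \lambda\,\gamma q \delta$ with $\lambda \in k\setminus\{0\}$ be the unique binomial minimum relation starting at $a$ in $B$, and set $b := e(\beta) = e(\delta)$.

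First, I would observe that $\alpha$ and $\gamma$ are the only arrows of $Q_B$ starting at $a$. Both must survive in $Q_B$, for otherwise the binomial relation would degenerate into a zero-relation at $a$, contradicting the hypothesis; and since $A$ is special biserial, at most two arrows may start at $a$ in $Q_A$ (and hence in $Q_B \subseteq Q_A$ by Lemma \ref{presentacion}). Consequently, the minimal projective presentation of $S(a)$ in $B$ begins
\[
P_B(e(\alpha))\oplus P_B(e(\gamma)) \xrightarrow{\;f\;} P_B(a) \longrightarrow S(a) \longrightarrow 0,
\]
with $f(x,y)=\alpha x + \gamma y$.

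Next, I would identify the syzygy $\ker f$. Writing any element of the ideal $e_a I_B$ as a $B$-linear combination of minimum relations (possibly multiplied on the left by paths of positive length), one sees that the only contribution not already lying in the image of $\rad B$ comes from the binomial $\rho$ itself, since it is by hypothesis the unique minimum relation based at $a$. Hence $\ker f$ is the cyclic right $B$-submodule generated by
\[
\eta := (p\beta,\; \lambda\,q\delta) \in P_B(e(\alpha))\oplus P_B(e(\gamma)),
\]
which is supported at the vertex $b$.

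The main obstacle is to show $\eta B \cong P_B(b)$, i.e.\ that the right annihilator
\[
\{\,x \in e_b B \;:\; p\beta\,x = 0 \text{ and } q\delta\,x = 0 \text{ in } B\,\}
\]
is trivial. One reduces to the case of a single non-zero path $x$ starting at $b$. If $x=e_b$, then $p\beta=0$ in $B$ would force $\alpha p\beta = 0$, collapsing the binomial to a zero-relation at $a$, a contradiction. If $x$ has length at least one, then its first arrow is one of the at most two arrows leaving $b$; by the special biserial condition there is at most one arrow $\mu^\beta$ with $\beta\mu^\beta\neq 0$ and at most one $\mu^\delta$ with $\delta\mu^\delta\neq 0$, and the two must be distinct. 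A case analysis on the first arrow of $x$, using the special biserial condition inherited by $B$ together with the forbidden sub-configurations recorded in Remark \ref{obsquiverproh} (in particular (a), forbidding binomial relations whose two branches rejoin at the source, and (c), bounding the arrows out of the endpoint of a zero-relation), shows that $p\beta\,x$ and $q\delta\,x$ cannot simultaneously vanish in $B$, so $x=0$.

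Granting this, the resolution
\[
0 \longrightarrow P_B(b) \longrightarrow P_B(e(\alpha))\oplus P_B(e(\gamma)) \longrightarrow P_B(a) \longrightarrow S(a) \longrightarrow 0
\]
is exact and gives $\pd_{\,A/\ann\,T} S(a) \leq 2$, as required.
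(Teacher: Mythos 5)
Your overall strategy is a legitimate alternative to the paper's: the paper identifies $\rad P(a)=M(pq^{-1})$ as a string module and invokes the Huard--Liu criterion \cite[Lemma 2.5]{HL} (checking that none of (PD1)--(PD4) holds) to get $\pd_{A/\mathrm{ann}\,T}M(pq^{-1})\leq 1$, whereas you compute the second syzygy explicitly and aim at the resolution $0\to P_B(b)\to P_B(e(\alpha))\oplus P_B(e(\gamma))\to P_B(a)\to S(a)\to 0$. Your first two steps are sound: the hypothesis that only the binomial relation starts at $a$ does force the minimal presentation you write down (any second arrow out of $a$ or out of $e(\alpha)$ deviating from the branch would, by the special biserial condition (b), create a zero-relation with source $a$, contradicting the hypothesis), and with that hypothesis one can check that $\ker f=\eta B$ with $\eta=(p\beta,\lambda q\delta)$, although your justification of this step ("the only contribution not already in the image of $\rad B$") is asserted rather than argued.

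The genuine gap is at what you correctly call the main obstacle: the triviality of the right annihilator of $\eta$. Note that the statement "for every non-zero path $x$ with $s(x)=e(\beta)$, either $p\beta x\neq 0$ or $q\delta x\neq 0$" is \emph{exactly} the dual half of condition (GD3) of Theorem \ref{SBgldim} --- but for the algebra $B=A/\mathrm{ann}\,T$, not for $A$. You propose to derive it from the special biserial condition together with Remark \ref{obsquiverproh}; this does not work, because Remark \ref{obsquiverproh} records configurations forbidden in $A$ (its arguments use (GD2) and (GD3) for $A$, i.e.\ $\gd A=2$), while the vanishing $p\beta x=0=q\delta x$ takes place in $B$, where new zero-relations coming from $\mathrm{ann}\,T$ are present and where the global dimension can be strictly larger than $2$ (indeed, up to $4$ by Theorem \ref{resultado principal biserial}). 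Concretely, nothing in your case analysis excludes the possibility that $p\beta x$ is killed by a relation of $A$ while $q\delta x$ is killed by a generator of $\mathrm{ann}\,T$; ruling this out requires Theorem \ref{anulador} (every minimal annihilator path extends on the left to a relation of $A$), which is precisely how the paper's Proposition preceding Lemma \ref{lemabiserial1} proves that $A/\mathrm{ann}\,T$ inherits (GD1) and (GD3). If you replace your appeal to Remark \ref{obsquiverproh} by a citation of that Proposition, your argument closes (linear independence of distinct non-zero paths in a special biserial algebra reduces the annihilator question to single paths, where (GD3) for $B$ applies), and your explicit resolution then gives a valid, somewhat more self-contained proof than the paper's route through \cite[Lemma 2.5]{HL} --- note that the paper's own verification of (PD3)/(PD4) rests on the same Proposition, so both proofs ultimately funnel through the descent of (GD3) to $A/\mathrm{ann}\,T$.
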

\begin{proof}
Let $a\in (Q_{A/\tiny{\ann}\,T})_0$ be a vertex such that only starts a binomial relation $(\alpha p,\beta q)$ in $a$, with ${\alpha,\beta\in (Q_{A/\tiny{\ann}\,T})_1}$ and $p,q$ non-trivial paths.  The projective cover of $S(a)$ is $P(a)$ and ${\mbox{rad}\,P(a)=M(pq^{-1})}$.

We claim that $\pd_{\scriptscriptstyle A/\tiny{\ann}\,T}M(pq^{-1})\leq 1$. In fact, by \cite[Lemma 2.5]{HL}, it is enough to show that $pq^{-1}$ does not satisfy  $(PD1), \,(PD2),\,(PD3)$ and $(PD4)$. Since $A/\ann\,T$ satisfies $(GD1)$ and $(GD3)$, we have that $(PD4)$ and $(PD3)$ are not satisfy, respectively. If $(PD1)$ holds, then there exists an arrow $\lambda$ such that $\lambda^{-1}pq^{-1}$ is a reduced walk. Since $A$ is special biserial, there exists a zero-relation $\alpha\lambda$ starting in  $a$,  a contradiction. Similarly, we get that $(PD2)$ is not possible. Thus, $\pd_{\scriptscriptstyle A/\tiny{\ann}\,T}M(pq^{-1})\leq 1$. Therefore, $\pd_{\scriptscriptstyle A/\tiny{\ann}\,T} S(a)\leq 2$.
\end{proof}
\begin{lema}\label{SBcaso2}
Let $A=kQ/I$ be a special biserial algebra such that ${\emph{gldim}\,A=2}$, $T$ be a $\tau$-tilting module and $a\in (Q_{A/\emph{ann}\,T})_0$. Assume that $a$ is not the start-point of a binomial relation. Then $\emph{pd}_{\scriptscriptstyle A/\tiny{\emph{ann}}\,T}S(a)\leq 4$.
\end{lema}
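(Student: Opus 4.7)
The plan is to reduce the bound on $\pd_B S(a)$, where $B=A/\ann\,T$, to bounding the projective dimensions of the string-module summands of its first syzygy. Since $a$ does not start a binomial relation in $B$ and $B$ is special biserial by Lemma~\ref{SBanulador}, the radical of the string projective $P(a)$ splits as a direct sum $M(w_\alpha)\oplus M(w_\beta)$ of at most two string modules, where $\alpha,\beta$ are the arrows of $Q_B$ starting at $a$ (possibly only one, or none) and $w_\alpha, w_\beta$ are the maximal strings in $B$ continuing each arrow downward. It therefore suffices to prove $\pd_B M(w_i)\leq 3$ for each summand, which yields $\pd_B S(a)\leq 4$.

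To bound the projective dimension of each string summand $M(w)$ I would iterate the minimal projective resolution using the standard description for string modules: the projective cover of $M(w)$ is the direct sum of projectives at its peaks, and each syzygy is again a direct sum of strings obtained by truncating those projectives by the relations at the bottom (the ``cohook'' construction). The crucial input controlling how deep this syzygy chain can propagate is Theorem~\ref{anulador}: any path generator $\rho$ of $\ann\,T$ of length at least two admits a non-zero path $\gamma$ such that either $\gamma\rho$ is a zero-relation of $A$, or $\gamma\rho$ appears in a binomial minimal relation of $A$. Combining this with the fact that $A$ satisfies (GD1), (GD2), (GD3) (since $\gd\,A=2$) and with the forbidden configurations of Remark~\ref{obsquiverproh}, I argue that every ``new'' zero-relation of $B$ truncating a syzygy string is at most one path-concatenation away from a genuine relation of $A$; in particular, (GD2) for $A$ then forbids the chain of overlapping zero-relations from extending more than a bounded number of times, so the syzygy chain from $M(w)$ reaches a projective within three steps.

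The main obstacle is the combinatorial case analysis at each syzygy step: one must identify the shape of the resulting strings (their peaks, valleys, and truncating relations) and decide whether each truncating relation lies in $A$ or is only a relation of $B$. In the latter case, Theorem~\ref{anulador} is applied to push the relation back to $A$, and (GD2) or (GD3) is then used to bound how this relation can interact with the next syzygy layer. Enumerating the finitely many admissible shapes shows that a string truncated only by relations of $A$ contributes at most one additional syzygy (since $\gd\,A=2$), while a string truncated by an $\ann\,T$-generator contributes at most two further layers before stabilizing at a projective; the bound $\pd_B M(w)\leq 3$ then gives $\pd_B S(a)\leq 4$, as required.
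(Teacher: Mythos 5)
Your skeleton coincides with the paper's own argument: the paper likewise splits the syzygy of $S(a)$ as $M(u)\oplus M(v)$ via $0\rightarrow M(u)\oplus M(v)\rightarrow P(u^{-1}\alpha_1^{-1}\beta_1 v)\rightarrow S(a)\rightarrow 0$, reduces to showing $\pd_{A/\tiny{\ann}\,T}M(u)\leq 3$, runs the minimal resolution along the string, and at each step lifts the truncating relation of $A/\ann\,T$ back to $A$ using Theorem \ref{anulador} together with $(GD1)$--$(GD3)$ and Remark \ref{obsquiverproh}. But there is a genuine gap: the entire content of the lemma is the case analysis you defer with ``enumerating the finitely many admissible shapes shows\dots''. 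To make the chain terminate one must, at each layer $r_i$, distinguish the three possibilities coming from Lemma \ref{presentacion} ($r_i$ a relation of $A$; $\gamma r_i$ a zero-relation of $A$ for some non-trivial $\gamma$; $\theta r_i$ a maximal subpath of a binomial relation of $A$) and extract sharp structural consequences in each: e.g.\ that when $\gamma r_2$ is a zero-relation of $A$ of length at least three, the special biserial condition forces the earlier segment $u$ to be trivial and $r_1$ to have length two; that the end-point of a zero-relation of $A$ cannot be the source of two arrows (Remark \ref{obsquiverproh}); and, decisively, that a relation of $A/\ann\,T$ appearing two layers deeper admits \emph{no} lift to $A$ in any of the three ways, contradicting Lemma \ref{presentacion} and thereby forcing the syzygy at that stage to be projective. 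None of this is implied by your blanket assertions.

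Two of those assertions are, moreover, mis-aimed. First, ``$(GD2)$ for $A$ forbids the chain of overlapping zero-relations from extending'' is not the operative mechanism: the overlapping relations live in $A/\ann\,T$, where $(GD2)$ may well fail (indeed the paper's final example has $\gd\,A/\ann\,T=4$); the termination instead comes from showing that a sufficiently deep relation of $A/\ann\,T$ cannot be lifted to $A$ at all. Second, ``a string truncated only by relations of $A$ contributes at most one additional syzygy since $\gd\,A=2$'' is not automatic, because the \emph{next} truncating relation along that same string may again come from $\ann\,T$, so the trichotomy must be re-run at that layer too — this is exactly what the paper does with $r_2$, $r_3$, $r_4$. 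So: the right strategy, identical to the paper's, but the inductive case analysis producing the bound $\pd_{A/\tiny{\ann}\,T}M(u)\leq 3$ is asserted rather than proved, and it is where all the difficulty of the lemma resides.
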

\begin{proof}
Consider $a\in (Q_{{\scriptscriptstyle A/\tiny{\ann}\,T}})_0$ such that $a$ is not the start-point of a binomial relation and  $\pd_{\scriptscriptstyle A/\tiny{\ann}\,T}S(a)\geq 5$. Then we have a subquiver as follows:

$$\xymatrix @!0 @R=1.3cm  @C=0.4cm{
&& a\ar[dr]^{\beta_1}\ar[dl]_{\alpha_1} &&\\
&\bullet\ar@{~>}[dl]_u&&\bullet\ar@{~>}[dr]^v&\\
\bullet &&&& \bullet
}$$
with $\alpha_1,\beta_1\in (Q_{\scriptscriptstyle A/\tiny{\ann}\,T})_1$ and  $u,v$ paths. Then the projective cover of $S(a)$ is $P(u^{-1}\alpha_1^{-1}\beta_1v)$. We have the following exact sequence:
\[0\rightarrow M(u)\oplus M(v) \rightarrow P(u^{-1}\alpha_1^{-1}\beta_1v)\rightarrow S(a)\rightarrow 0.\]
Since $\pd_{{\scriptscriptstyle A/\tiny{\ann}\,T}}S(a)\geq 5$, then we get that $\pd_{{\scriptscriptstyle A/\tiny{\ann}\,T}}M(u)\geq 4$ or $\pd_{{\scriptscriptstyle A/\tiny{\ann}\,T}}M(v)\geq 4$.  Without loss of generality, assume that $\pd_{{\scriptscriptstyle A/\tiny{\ann}\,T}}M(u) \geq 4$.

If $u$ is a trivial path then $e(\alpha_1)$ is not the start-point of a binomial relation, because $A/\ann\,T$ satisfies $(GD3)$.

Assume that $u$ is not a trivial path and  $e(\alpha_1)$ is the start-point of a binomial relation $(uu_2,\beta_2v_2)$, with ${\beta_2\in (Q_{{\scriptscriptstyle A/\tiny{\ann}\,T}})_1}$ and $u_2,\,v_2$ paths of length al least one. Observe that, since $A/\ann\,T$ satisfies $(GD3)$, $u_2$ is an arrow. Then, we have the following exact sequence:
\[0\rightarrow M(v_2)\rightarrow P(uu_2,\beta_2v_2)\rightarrow M(u)\rightarrow 0\]
where $P(uu_2,\beta_2v_2)$ is the projective cover of $M(u)$. Since $\pd_{{\scriptscriptstyle A/\tiny{\ann}\,T}}M(u)\geq 4$, $M(v_2)$ is not a projective module. Then, there exists $\rho\in (Q_{{\scriptscriptstyle A/\tiny{\ann}\,T}})_1$ such that either $\rho^{-1}v_2$ is a reduced walk or $v_2\rho$ is a reduced walk (observe that  $s(v_2)$ is not the start-point of a binomial relation because $A/\ann\,T$ satisfies $(GD1)$).

Assume that there exists $\rho\in (Q_{{\scriptscriptstyle A/\tiny{\ann}\,T}})_1$ such that $\rho^{-1}v_2$ is a reduced walk. Then since $\alpha_1u\neq 0$ in $A$ and $\beta_2v_2\neq 0$ in $A$ we get that $\alpha_1\beta_2=0$ in $A$ and $\beta_2\rho$ in $A$ which is a contradiction since $A$ satisfies $(GD2)$.
Then,  there exists $\rho\in (Q_{\scriptscriptstyle A/\tiny{\ann}\,T})_1$ such that $v_2\rho$ is a reduced walk. Since $v_2\rho \neq 0$ in $A$, then $u_2\rho$ must be zero in $A$. Since $\alpha_1\beta_2=0$ in $A$, by Remark \ref{obsquiverproh} we get a contradiction.

Therefore,  $e(\alpha_1)$ is not the start-point of a binomial relation. Since $\pd_{{\scriptscriptstyle A/\tiny{\ann}\,T}}M(u)\geq~4$, there exists either $\beta_2\in (Q_{{\scriptscriptstyle A/\tiny{\ann}\,T}})_1$ such that $\beta_2^{-1}u$ is a reduced walk or   $u_2'\in (Q_{{\scriptscriptstyle A/\tiny{\ann}\,T}})_1$ such that $uu_2'$ is a reduced walk.
with $v_1$ and $u_2$ paths. Then, we have the following exact sequence:
\[0\rightarrow M(v_1)\oplus M(u_2)\rightarrow P(v_1^{-1}\beta_2^{-1}uu'_2u_2)\rightarrow M(u)\rightarrow 0\]
where $P(v_1^{-1}\beta_2^{-1}uu'_2u_2)$ is the projective cover of $M(u)$ and $M(v_1)\neq 0$ or $M(u_2)\neq 0$. Without loss of generality, assume that $M(u_2)\neq 0$ and $\pd_{{\scriptscriptstyle A/\tiny{\ann}\,T}}M(u_2)\geq 3$. Observe that, in this case, there exists a zero-relation $r_1=\alpha_1uu'_2$ in $A/\ann\,T$ ($u$ could be a trivial path).

According to Lemma \ref{presentacion}, we have three possibilities: $r_1$ is a relation in $A$, there exists a non trivial path $\gamma$ such that $\gamma r_1$ is a zero-relation in $A$ or there exist  paths $\rho, p$ such that $(\rho r_1, p)$ is a binomial relation in $A$ (observe that $\rho$ could be a trivial path).

Assume that $r_1$ is a zero-relation in $A$. Then $e(u'_2)$ is not the start-point of two arrows. Since $M(u_2)$ is not a projective module, there exists $u'_3\in (Q_{{\scriptscriptstyle A/\tiny{\ann}\,T}})_1$ such that $u_2u'_3$ is a reduced walk in $A/\ann\,T$. We have the following exact sequence:
\[0\rightarrow M(u_3)\rightarrow P(u_2u_3'u_3)\rightarrow M(u_2)\rightarrow 0\]
where $P(u_2u_3'u_3)$ is the projective cover of $M(u_2)$.  Then there exists a zero-relation $r_2$ contained in the path $uu'_2u_2u'_3$ that contains the path $u'_2u_2u'_3$. Let $r_2=\widetilde{u}u_2'u_2u_3'$, with $u=\widehat{u}\,\widetilde{u}$. Since $\gd\,A=2$, then $r_2$ is not a zero-relation in $A$.

Assume that there exists a non trivial path $\gamma$ such that $\gamma r_2$ is a zero-relation in $A$. Since $r_2$ is of length al least two, then $\gamma r_2$ is of length at least three. Therefore, since $A$ is special biserial $u$ is trivial and $r_1$ is of length two. Then $\alpha_1u_2'$ and $\gamma u_2'u_2u_3'$ are zero-relations in $A$. Since $e(u_3')$ is the end-point of a zero-relation in $A$, then $e(u_3')$ is not be the start-point of two arrows. Since $M(u_3)$ is not a projective module, there exists $u'_4\in (Q_{{\scriptscriptstyle A/\tiny{\ann}\,T}})_1$ such that $u_3u_4'$ is a reduced walk. Then we have a relation $r_3$ in $A/\ann\,T$ contained in the path $u_2u_3'u_3u_4'$ that contains the path $u_3'u_3u_4'$. Let $r_2=\widetilde{u_2}u_3'u_3u_4'$, with $u_2=\widehat{u_2}\,\widetilde{u_2}$. Since $A$ satisfies $(GD2)$, then $r_3$ is not a relation in $A$. Since $\gamma r_2$ is of length al least three, then there is no path $\gamma'$ such that $\gamma'r_3$ is a zero-relation in $A$ because $A$ is special biserial. Assume that there exists a path $\theta$ such that $(\theta r_3,\lambda p)$ is a binomial relation in $A$, with $\lambda$ an arrow and $p$ a path. Then $\theta$ is a subpath of $u_2'\widehat{u_2}$. Since $u_2'u_2u_3'\neq 0$, then $u_2'\widehat{u_2}\lambda$ is zero because $A$ is special biserial. Since $\gamma u_2'u_2u_3' =0$ we get a contradiction because $A$ satisfies $(GD3)$. Hence, for this case, $M(u_3)$ is projective and $\pd_{\scriptscriptstyle A/\tiny{\ann}\,T}S(a)\leq 3$.

Assume that $r_2$ is related with a binomial relation in $A$. Then we have two possibilities: $(r_2,\lambda_1 p \lambda_2)$ is a binomial relation in $A$ or there exists a non-trivial path $\gamma$ such that $(\gamma r_2,\lambda_1 p \lambda_2)$ is a binomial relation in $A$.  First, suppose  that $(r_2,\lambda_1 p \lambda_2)$ is a binomial relation in $A$, with $\lambda_1, \lambda_2$ arrows and $p$ a path. If $u$ is not a trivial path, then since $uu_2'\neq 0$ we get that $u\lambda=0$. Taking the path $\alpha_1\widehat{u}$ we have a contradiction because $A$ satisfies $(GD3)$. Therefore, $u$ is a trivial path and we have that $r_1=\alpha_1 u_2'$ and $r_2=u_2'u_2u_3'$. By Remark \ref{obsquiverproh}, $e(u_3')$ is not the start-point of two arrows. Since $M(u_3)$ is not a projective module, then there exists $u_4'\in (Q_{{\scriptscriptstyle A/\tiny{\ann}\,T}})_1$ such that $u_3u_4'$ is a reduced walk. Then we have a zero-relation $r_3$ in $A/\ann\,T$ contained in the path $u_2u_3'u_3u_4'$ that contains the path $u_3'u_3u_4'$. Let $r_3=\widetilde{u_2}u_3'u_3u_4'$, with $u_2=\widehat{u_2}\,\widetilde{u_2}$. Since $A$ satisfies $(GD3)$ and $(GD1)$, $r_3$ is a zero-relation in $A$. Moreover, since $A$ satisfies $(GD3)$, $r_3$ is of length two and then $\widetilde{u_2}$ and $u_3$ are trivial paths. Then $r_3=u_3'u_4'$. Since $u_3$ is a trivial path, $M(u_3)$ is a simple module. Then we have the  exact sequence:
\[0\rightarrow M(u_4)\rightarrow P(u_3u_4'u_4)\rightarrow M(u_3)\rightarrow 0\]
where $P(u_3u_4'u_4)$ is the projective cover of $M(u_3)$. Since $e(u_4')$ is the end-point of a zero-relation in $A$, then $e(u_4')$ is not the start-point of two arrows. Since $M(u_4)$ is not a projective module, there exists $u_5'\in (Q_{{\scriptscriptstyle A/\tiny{\ann}\,T}})_1$ such that $u_4u_5'$ is a reduced walk in $A/\ann\,T$. Then, there exists a zero-relation $r_4=u_4'u_4u_5'$ in $A/\ann\,T$. Since $\gd\,A=2$, then $r_4$ is not a zero-relation in $A$. Since $A$ satisfies $(GD3)$ and $(GD1)$, then $r_4$ is not  related neither with a zero-relation in $A$ nor a binomial relation in $A$, which is a contradiction. Therefore, $M(u_4)$ is a projective module and $\pd_{\scriptscriptstyle A/\tiny{\ann}\,T}S(a)\leq 4$.

Finally, assume that there exists a non-trivial path $\gamma$ such that $(\gamma r_2, \lambda_1 p \lambda_2)$ is a binomial relation in $A$. Since $A$ is special biserial and $\gamma r_2\neq 0$, we get that $u$ is a trivial path and $r_1=\alpha_1u_2'$. Clearly, $e(u_3')$ is not the start-point of a binomial relation because $A$ satisfies $(GD1)$. Since $M(u_3)$ is not a projective module there exists either $\beta \in (Q_{{\scriptscriptstyle A/\tiny{\ann}\,T}})_1$ such that $\beta_4^{-1}u_3$ is a reduced walk or $u_4'\in (Q_{{\scriptscriptstyle A/\tiny{\ann}\,T}})_1$ such that $u_3u_4'$ is a reduced walk. Suppose that there exists $\beta$ such that $\beta u_3$ is a reduced walk. Then there exists a zero-relation $r_3$ contained in the path $u_2u_3'\beta$ that contains the path $u_3'\beta$. Since $A$ satisfies $(GD1)$ and $(GD3)$, $r_3$ is a zero-relation in $A$ of length two. Then $r_3=u_3'\beta$. Moreover, for this case there is no $u_4'$ such that $u_3u_4'$ is a reduced walk in $A/\ann\,T$ because $A$ satisfies $(GD1)$ and $(GD3)$. Then we get the following exact sequence:
\[0\rightarrow M(v)\rightarrow P(v^{-1}\beta^{-1}u_3)\rightarrow M(u_3)\rightarrow 0.\]
Since $r_3$ is a zero-relation in $A$, then $e(\beta)$ is not the start-point of two arrows. Since $M(v)$ is not a projective module, there exists $u_5'\in (Q_{{\scriptscriptstyle A/\tiny{\ann}\,T}})_1$ such that $vu_5'$ is a reduced walk. Then there exists a zero-relation $r_4=\beta v u_5'$. The relation $r_4$ is not a relation in $A$ since $\gd\,A=2$. On the other hand, since $A$ satisfies $(GD3)$ and $(GD1)$, $r_4$ is neither related with a zero-relation in $A$ nor with a binomial relation in $A$. Therefore, $M(v)$ is projective and $\pd_{\scriptscriptstyle A/\tiny{\ann}\,T}S(a)\leq 4$. Similarly, if there exists $u_4'\in (Q_{{\scriptscriptstyle A/\tiny{\ann}\,T}})_1$ such that $u_3u_4'$ is a reduced walk. Hence, if $r_1$ is a zero-relation in $A$ then $\pd_{\scriptscriptstyle A/\tiny{\ann}\,T}M(u_2)\leq 2$.

In  the other cases for $r_1$, a similar analysis as above, allow us to get the result. Therefore, ${\pd_{\scriptscriptstyle A/\tiny{\ann}\,T}S(a)\leq 4}$.
\end{proof}

\begin{lema}\label{lemabiserial2}
Let $A=kQ/I$ be a special biserial algebra such that ${\emph{gldim}\,A=2}$, $T$ be a $\tau$-tilting $A$-module and $a\in (Q_{A/\emph{ann}\,T})_0$ such that $a$  is the start-point of a binomial relation and the start-point of a zero-relation. Then $\emph{pd}_{A/\emph{ann}\,T}S(a)\leq 4$.
\end{lema}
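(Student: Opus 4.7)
The strategy follows the pattern of Lemma~\ref{SBcaso2}, adapted to the presence of a binomial relation at $a$. Since $A$ is special biserial, at most two arrows start at $a$, and these must be the initial arrows $\alpha$, $\beta$ of the binomial relation $(\alpha p, \beta q)$ (the binomial itself must already exist in $A$, since by Lemma~\ref{SBanulador} the ideal $\ann\,T$ is generated by paths and hence cannot create new binomial relations). Any zero-relation starting at $a$ is therefore of the form $\alpha p_0$ or $\beta q_0$, and by the binomial the two branches interact at their common endpoint $b=e(\alpha p)=e(\beta q)$.

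First I would describe the projective cover $P(a)$ of $S(a)$ in $\mo\, A/\ann\,T$: it is the biserial (non-string) module with top $S(a)$ whose two arms $\alpha p$ and $\beta q$ merge at $b$ and possibly continue past $b$ along a single string $w$. From this I would read off the minimal projective presentation $0\to\Omega S(a)\to P(a)\to S(a)\to 0$, so that the first syzygy is $\Omega S(a)=\mbox{rad}\,P(a)$, whose top has at most the two summands $S(e(\alpha))$ and $S(e(\beta))$, joined by the merging at $b$.

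Next I would compute $\Omega^2 S(a)$, $\Omega^3 S(a)$ and $\Omega^4 S(a)$ by iteratively taking projective covers, using the string/band combinatorics for special biserial algebras. At each step the relations of $A/\ann\,T$ controlling the new syzygy are lifted to $A$ via Theorem~\ref{anulador}: either they are already relations of $A$, or they are zero-relations of $A$ obtained by prepending a path, or they fit into a binomial relation of $A$. Combining this with conditions (GD1)--(GD3) and with Remark~\ref{obsquiverproh} rules out, case by case, the configurations that would force a fifth syzygy to be non-projective; the merging at $b$ imposed by the binomial shortens the chain of possible syzygies compared with the purely string setting of Lemma~\ref{SBcaso2}, which is what ultimately yields the bound $4$ rather than a larger number.

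The main obstacle is the combinatorial case analysis. One must separately handle (i) whether the zero-relation at $a$ is already a relation of $A$ or arises only in $A/\ann\,T$, (ii) whether the merge-point $b$ admits a further non-zero continuation and how that continuation interacts with existing relations of $A$, and (iii) for each subsequent syzygy, whether its controlling relation lifts via Theorem~\ref{anulador} to a zero-relation of $A$, to a zero-relation obtained by prepending a path, or to part of a binomial relation of $A$. In every branch, the hypothesis $\gd\,A=2$ together with Theorem~\ref{SBgldim} closes the case off by the fourth syzygy, but organizing the sub-cases without duplication, in the spirit of the analysis in Lemma~\ref{SBcaso2}, is the delicate point.
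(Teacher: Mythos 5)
Your skeleton coincides with the paper's own proof: the same first syzygy $\rad\,P(a)=M(p_1q_1^{-1})$, the same iterative computation of syzygies via string combinatorics, the same lifting of relations of $A/\ann\,T$ to relations of $A$ through Theorem \ref{anulador}, and the same appeal to (GD1)--(GD3) and Remark \ref{obsquiverproh}. (Your preliminary observation that the binomial relation at $a$ must already be a binomial relation of $A$, because $\ann\,T$ is generated by paths, is correct and is implicitly used by the paper.) The genuine gap is that the proposal stops exactly where the proof begins: the sentence ``(GD1)--(GD3) and Remark \ref{obsquiverproh} rule out, case by case, the configurations that would force a fifth syzygy to be non-projective'' is precisely the statement to be proved, not an argument, and nothing in your sketch explains why the process must terminate by the fourth syzygy rather than the fifth or the tenth. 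Concretely, the paper's proof consists of the following steps, none of which is carried out in the proposal: (i) (GD3) for $A/\ann\,T$ forces the summand $M(\gamma_1^{-1}\rho_1)$ of the second syzygy, coming from the continuations past the common endpoint of the binomial, to be projective, which reduces the whole problem to the single string arms $M(p_2)$, $M(q_2)$; (ii) the relation $r_1=\alpha_1\alpha_2$ controlling that arm is already a zero-relation of $A$; (iii) the next relation $r_2=\alpha_2p_2\alpha_3$ is not a relation of $A$ by (GD2) and not a maximal subpath of a binomial of $A$ by (GD1), so Theorem \ref{anulador} leaves only two lifts (a prepended zero-relation $\gamma r_2$ of $A$, or a binomial $(\gamma r_2,\lambda_1 q\lambda_2)$ of $A$); and (iv) in each of those two branches the relation $r_3$ controlling the fourth syzygy admits \emph{no} legal lift at all --- prepending a second path contradicts special biseriality (two long overlapping zero-relations), while the binomial lifts contradict (GD3) --- forcing $M(p_3)$ projective and hence $\pd_{A/\ann\,T}M(p_2)\leq 2$ and $\pd_{A/\ann\,T}S(a)\leq 4$. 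Without step (iv) in particular, the induction has no reason to stop, so the bound $4$ is not established by what you wrote.

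A secondary inaccuracy: you claim the merging at $b$ ``shortens the chain of possible syzygies compared with the purely string setting of Lemma \ref{SBcaso2}, which is what ultimately yields the bound $4$.'' But Lemma \ref{SBcaso2} itself gives the bound $4$ in the string setting (it is the purely binomial case, Lemma \ref{lemabiserial1}, that gives $2$); the mixed case of this lemma is exactly as long as the string case, and the role of the binomial here is only the reduction in step (i), not a shortening of the syzygy chain. So even as a heuristic your explanation of where the number $4$ comes from does not match the actual mechanism of the proof.
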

\begin{proof}
Let $a\in (Q_{\scriptscriptstyle A/\tiny{\ann}\,T})_0$ such that  $a$ is the start-point of a binomial relation $(\alpha_1 p_1,\beta_1 q_1)$, with $\alpha,\beta\in (Q_{\scriptscriptstyle A/\tiny{\ann}\,T})_1$ and $p,q$ non-trivial paths and where $a$ is also the start point of a zero-relation. Assume that $\pd_{\scriptscriptstyle A/\tiny{\ann}\,T}S(a)\geq 5$. Then, we have a subquiver as follows:
$$\xymatrix  @R=0.005cm  @C=0.005cm{
&&&& a\ar[ddll]_{\alpha_1}\ar[ddrr]^{\beta_1}\ar@{--}[ddddddd] &&&&\\
&&&\ar@{.}@/_/[dll]&&\ar@{.}@/^/[drr]&&&\\
&&\bullet\ar[dd]\ar[dll]_{\alpha_2} &&&&\bullet\ar[dd]\ar[drr]^{\beta_2} &&\\
\bullet&&&&&&&&\bullet\\
&&\vdots&&&&\vdots&&\\
&&\bullet \ar[ddrr]&&&&\bullet\ar[ddll]&&\\
&&&&&&&&\\
&&&&\bullet \ar@{~>}[ddll]_{\gamma_1}\ar@{~>}[ddrr]^{\rho_1}&&&&\\
&&&&&&&\\
&&\bullet&&&&\bullet&&
}$$
with $\alpha_2,\,\beta_2\in (Q_{{\scriptscriptstyle A/\tiny{\ann}\,T}})_1$ and $\gamma_1,\,\rho_1$ paths. Note that ${\mbox{rad}\,P(a)=M(p_1q_1^{-1})}$. Then, we get the following exact sequence:
\[0\rightarrow M(p_2)\oplus M(\gamma_1^{-1}\rho_1)\oplus M(q_2)\rightarrow P(p_2^{-1}\alpha_2^{-1}p_1\gamma_1)\oplus P(q_2^{-1}\beta_2^{-1}q_1\rho_1)\rightarrow M(p_1q_1^{-1})\rightarrow 0\]
with $\alpha_2,\,\beta_2 \in (Q_{{\scriptscriptstyle A/\tiny{\ann}\,T}})_1$, $q_2,p_2,\rho_1,\gamma_1$ paths and ${P(p_2^{-1}\alpha_2^{-1}p_1\gamma_1)\oplus P(q_2^{-1}\beta_2^{-1}q_1\rho_1)}$ the projective cover of $M(p_1q_1^{-1})$. Since $A/\ann\,T$ satisfies $(GD3)$, then $M(\gamma^{-1}\rho_1)$ is projective. Thus, since ${\pd_{\scriptscriptstyle A/\tiny{\ann}\,T}(M(p_1q_1^{-1}))\geq 4}$, either ${\pd_{\scriptscriptstyle A/\tiny{\ann}\,T}M(p_2)\geq 3}$ or ${\pd_{\scriptscriptstyle A/\tiny{\ann}\,T}M(q_2)\geq 3}$.

Without loss of generality, suppose that $\pd_{\scriptscriptstyle A/\tiny{\ann}\,T}M(p_2)\geq 3$. Then we have a zero-relation  $r_1=\alpha_1\alpha_2$ in $A$. Then $e(\alpha_2)$ is not the start-point of two arrows. Since $M(p_2)$ is not a projective module there exists $\alpha_3\in (Q_{{\scriptscriptstyle A/\tiny{\ann}\,T}})_1$ such that $p_2\alpha_3$ is a reduced walk  in $A/\ann\,T$. We have the following exact sequence:
\[0\rightarrow M(p_3) \rightarrow P(p_2\alpha_3p_3)\rightarrow M(p_2)\rightarrow 0\]
where $P(p_2\alpha_3p_3)$ is the projective cover of $M(p_2)$.  Then there exists a zero-relation ${r_2=\alpha_2p_2\alpha_3}$ in $A/\ann\,T$. Since $A$ satisfies $(GD2)$, then $r_2$ is not a zero-relation in $A$. Since $A$ satisfies $(GD1)$, then $r_2$ is not a maximal subpath of a binomial relation in $A$.

Assume that there exists a non-trivial path $\gamma$ such that $\gamma r_2$ is a zero-relation in $A$. Since $e(\alpha_3)$ is the end-point of a zero-relation in $A$, then $e(\alpha_3)$is not the start-point of two arrows. Since $M(p_3)$ is not a projective module, there exists $\alpha_4\in (Q_{{\scriptscriptstyle A/\tiny{\ann}\,T}})_1$ such that $p_3\alpha_4$ is a reduced walk. Then there exists a zero-relation $r_3$ in $A/\ann\,T$ contained in the path $p_2\alpha_3p_3\alpha_4$ that contains the path $\alpha_3p_3\alpha_4$. Let $r_3=\widetilde{p_2}\alpha_3p_3\alpha_4$, with $p_2=\widehat{p_2}\,\widetilde{p_2}$. Since $\gd\,A=2$, then  $r_3$ is not a zero-relation in $A$. Suppose that there exists a non-trivial path $\rho$ such that $\rho r_3$ is a zero-relation in $A$. Then $\rho r_3$ is a zero-relation in $A$ of length at least three and $\alpha_2 p_2 \alpha_3 \neq 0 $,  a contradiction since $A$ is special biserial. Next, assume that $(r_3,\lambda_1 q \lambda_2)$ is a binomial relation in $A$. Since $\alpha_2\widehat{p_2}r_3\neq 0$ in $A$ we get that $\alpha_2\widehat{p_2}\lambda_1$ is zero in $A$. Therefore, considering the path $\gamma \alpha_2 \widehat{p_2}$ we get a contradiction because $A$ satisfies $(GD3)$. Finally, assume that there exists a non-trivial path $\theta$ such that $(\theta r_3, \lambda_1 q \lambda_2)$ is a binomial relation in $A$. Then $\theta$ is a subpath of $\gamma \alpha_2 p_2$ and we get a contradiction since $A$ satisfies $(GD3)$. Therefore, $M(p_3)$ is a projective module and $\pd_{\scriptscriptstyle A/\tiny{\ann}\,T}M(p_2)\leq 1$.

Finally, assume that there exists  a non-trivial path $\gamma$ such that $(\gamma r_2, \lambda_1 q \lambda_2)$ is a binomial relation in $A$. Then with similar analysis as we did in the proof of Lemma \ref{SBcaso2} we conclude that $\pd_{\scriptscriptstyle A/\tiny{\ann}\,T}M(p_2)\leq 2$.

Therefore, $\pd_{\scriptscriptstyle A/\tiny{\ann}\,T}S(a)\leq 4$.
\end{proof}

As an immediate consequence of  Lemma \ref{lemabiserial1}, Lemma \ref{SBcaso2} and  Lemma \ref{lemabiserial2} we get the main result of this section.
\begin{teo}\label{resultado principal biserial}
Let $A=kQ/I$ be a special biserial algebra such that ${\emph{gldim}\,A=2}$ and $T$ be a $\tau$-tilting $A$-module. Then $\emph{gldim}\,A/\emph{ann}\,T\leq 4$.
\end{teo}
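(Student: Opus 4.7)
The plan is to bound $\pd_{A/\ann\,T} S(a)$ for every simple $A/\ann\,T$-module $S(a)$ and then invoke the standard identity $\gd\,A/\ann\,T = \sup_a \pd\,S(a)$. The argument is a direct case analysis on the relations starting at the vertex $a \in (Q_{A/\ann\,T})_0$ that pieces together the three preceding lemmas.

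For a fixed vertex $a$, I would distinguish three mutually exclusive and exhaustive cases. First, if only a binomial relation starts at $a$ (and no zero-relation), Lemma \ref{lemabiserial1} yields $\pd_{A/\ann\,T} S(a) \leq 2$. Second, if no binomial relation starts at $a$, Lemma \ref{SBcaso2} yields $\pd_{A/\ann\,T} S(a) \leq 4$. Third, if both a binomial relation and a zero-relation start at $a$, Lemma \ref{lemabiserial2} yields $\pd_{A/\ann\,T} S(a) \leq 4$. By condition (GD1) of Theorem \ref{SBgldim}, at most one binomial relation can start at any given vertex, so these three cases cover all configurations.

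Combining the bounds, $\pd_{A/\ann\,T} S(a) \leq 4$ for every vertex $a$, whence $\gd\,A/\ann\,T \leq 4$. I do not expect any serious obstacle in this final step: the heavy lifting has already been carried out in the three lemmas, and the present theorem is essentially a synthesis statement that collects their conclusions into a uniform bound over all simples. The only point worth double-checking is that the trichotomy is indeed exhaustive, which follows immediately from (GD1).
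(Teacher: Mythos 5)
Your proposal is correct and coincides with the paper's proof: the theorem is stated there as an immediate consequence of Lemmas \ref{lemabiserial1}, \ref{SBcaso2} and \ref{lemabiserial2}, assembled via exactly your trichotomy on whether a binomial relation and/or a zero-relation starts at each vertex, together with $\gd\,A/\ann\,T=\sup_a \pd_{A/\ann\,T}S(a)$. The one point to phrase carefully is that the exhaustiveness of the case split requires that $A/\ann\,T$ itself satisfies $(GD1)$ (so no vertex of $Q_{A/\ann\,T}$ starts two distinct binomial relations), which is not condition $(GD1)$ applied to $A$ but rather the content of the proposition the paper proves just before these lemmas.
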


The following example shows that the bound given by Theorem \ref{resultado principal biserial} for the global dimension of $A/\ann\,T$ is minimum.

\begin{ej}
Let $A=kQ/I$ be the following special biserial algebra:
$$\xymatrix @R=0.35cm  @C=0.3cm{
7 \ar[rd]^{\delta} & & 1\ar[rd]^{\beta}\ar[ld]^{\alpha}& \\
 & 2\ar[d]^{\gamma}& & 4\ar[ddl]^{\mu}\\
 & 3 \ar[dr]^{\lambda} & & \\
 & & 5\ar[d]^{\epsilon} &\\
 & & 6 &
}$$
where $I=<\delta\gamma, \lambda\epsilon, \alpha\gamma\lambda-\beta\mu>$. It is easy to see that $\gd\,A=2$. Consider ${T=1 \oplus 6 \oplus {\small{\txt{1\\2\\3}}} \oplus {\small{\txt{1\\4}}}\oplus {\small{\txt{3\\5}}} \oplus {\small{\txt{5\\6}}}\oplus {\small{\txt{1\;\; 7\\2}}}}$. The annihilator of $T$ is generated by the path $\gamma \lambda$. Then $A/\ann\,T$ is given by the same quiver of $A$ with relations $I'=<\delta\gamma, \lambda\epsilon, \gamma\lambda, \beta\mu>$. It is not hard to see that $\pd_{A/\ann\,T}S(7)=4$ and $\gd\,A/\ann\,T =4$.
\end{ej}

As a direct consequence of the above theorem we have Theorem D.

\begin{cor}
Let $A=kQ/I$ be a special biserial algebra such that ${\emph{gldim}\,A=2}$, $T$ be a $\tau$-tilting $A$-module and $B=\emph{End}_AT$. Then $\emph{gldim}\,B\leq 5$.
\end{cor}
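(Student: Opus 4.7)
The plan is to deduce the bound directly from Theorem \ref{resultado principal biserial} together with the classical observation (recorded in the remark just before Theorem \ref{cotagd1}) that $\tau$-tilting modules are tilting over $A/\ann\,T$. In more detail, since $T$ is a $\tau$-tilting $A$-module, by \cite[Proposition 2.2]{AIR} $T$ is a (classical) tilting $A/\ann\,T$-module, and $\ann\,T$ annihilates every morphism in $\End_A T$, so
\[
B \;=\; \End_A T \;\cong\; \End_{A/\ann\,T}\,T.
\]
This reduces the question to comparing the global dimension of $B$ with that of $A/\ann\,T$.

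Next I would invoke the classical bound \cite[Chapter VI, Theorem 4.2]{ASS} for the endomorphism algebra of a tilting module, applied with $A/\ann\,T$ in place of $A$: whenever $\gldim(A/\ann\,T)$ is finite,
\[
\gldim B \;\leq\; \gldim(A/\ann\,T) \,+\, 1.
\]
Finally, Theorem \ref{resultado principal biserial} guarantees $\gldim(A/\ann\,T) \leq 4$ under the hypothesis that $A$ is special biserial with $\gldim A = 2$, so chaining the two inequalities yields
\[
\gldim B \;\leq\; 4 + 1 \;=\; 5,
\]
which is the desired bound.

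There is essentially no obstacle at this stage: the only substantial work has already been done in Theorem \ref{resultado principal biserial}, whose proof depends on the Green--Happel--Zacharia-type case analysis carried out in Lemmas \ref{lemabiserial1}, \ref{SBcaso2} and \ref{lemabiserial2} via the characterisation of special biserial algebras of global dimension two recalled in Theorem \ref{SBgldim}. Once that bound on $\gldim(A/\ann\,T)$ is in hand, the present corollary is a purely formal consequence of classical tilting theory, and the proof reduces to the two displayed inequalities above.
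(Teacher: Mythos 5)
Your proposal is correct and coincides with the paper's intended argument: the corollary is stated there as an immediate consequence of Theorem \ref{resultado principal biserial}, using precisely the identification $B\cong \End_{A/\mbox{ann}\,T}T$ with $T$ tilting over $A/\mbox{ann}\,T$ and the classical bound $\gd\,B\leq \gd\,(A/\mbox{ann}\,T)+1$ (already recorded in the remark before Theorem \ref{cotagd1}). Nothing is missing; the two displayed inequalities are exactly the intended deduction.
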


\end{document}